\newif\ifhideproofs
\newtheorem{theorem}{Theorem}[section]
\newtheorem  {lemma}[theorem]{Lemma}         
\newtheorem  {corollary}[theorem]     {Corollary}
\newtheorem  {prop}[theorem]   {Proposition}
\newtheorem  {defi}{Definition}[section]
\newtheorem {notation}[defi]    {Notation}
\newtheorem* {notation*}    {Notation}
\newtheorem* {theorem*}      {Theorem}
\newtheorem* {lemma*}        {Lemma}
\newtheorem* {corollary*}    {Corollary}
\newtheorem* {prop*}  {Proposition}
\newtheorem* {defi*}   {Definition}
\newtheorem* {remark*}       {Remark}
\newtheorem  {remark}[defi]        {Remark}
\newtheorem* {claim*}        {Claim}
\newtheorem* {assum*}        {Assumption}
\newtheorem  {assum}[defi]        {Assumption}
\def \eps{\epsilon}
\def \P {\mathbb P}
\def \E {\mathbb E}
\def \var {\text{Var}}
\def \cov {\text{Cov}}
\def \unif {\mathrm{Unif}}
\def \Lvert {\middle|\:}
\def \given {\;|\;}
\def \Given {\;\Lvert\;}
\def \wst {\mathsf{WST}}
\def \mst {\mathsf{MST}}
\def \maxst {\mathsf{MaxST}}
\def \ust {\mathsf{UST}}
\def \Reff {\mathcal{R}_{\mathrm{eff}}}
\def \path {\mathrm{Path}}
\def \c {\boldsymbol{\mathit{c}}}
\def \tc{{\mathrm{(tc)}}}
\def \tn{{\mathrm{(tn)}}}
\def \C{{\mathcal{C}^{T}}}
\def \CN{{\mathcal{CN}^{T}_n}}
\def \A{\mathcal{A}^T}
\def \tW{\widetilde{W}}
\def \tE{E}%\def \tE{\widetilde{E}}
\author{Ágnes Kúsz}
\title{The local limit of weighted spanning trees on balanced networks}
\date{}
\begin{document}
\maketitle
\begin{abstract}
We prove that the local limit of the weighted spanning trees on any simple connected high degree almost regular sequence of electric networks is the Poisson(1) branching process conditioned to survive forever, by generalizing \cite{nachmias2022local} and closing a gap in their proof. We also study the local statistics of the $\wst$'s on high degree almost balanced sequences, which is interesting even for the uniform spanning trees.

Our motivation comes from studying an interpolation $\{\wst^{\beta}(G)\}_{\beta\in [0, \infty)}$ between $\ust(G)$ and $\mst(G)$ by $\wst$'s on a one-parameter family of random environments. This model has recently been introduced in \cite{makowiec2024diameter, kusz2024diameter}, and the phases of several properties have been determined on the complete graphs. 

We show a phase transition of $\wst^{\beta_n}(G_n)$ regarding the local limit and expected edge overlaps for high degree almost balanced graph sequences $G_n$, without any structural assumptions on the graphs; while the expected total length is sensitive to the global structure of the graphs. Our general framework results in a better understanding even in the case of complete graphs, where it narrows the window of the phase transition of \cite{makowiec2024local}.
	\end{abstract}

\tableofcontents
\addtocontents{toc}{\protect\setcounter{tocdepth}{3}}
\section{Introduction and results}

\subsection{Local limit of \texorpdfstring{$\wst$}{}'s on deterministic electric networks}\label{subsec:intro_wst_on_det}

A spanning tree of a connected graph $G$ is a connected subgraph of $G$ that does not contain any cycles. A graph $G$ equipped with edge weights $\c:=\{c(e)\}_{e\in E(G_n)}$ is called an electric network. In this paper, we write $c(u,v)=c((u,v))$ for any $(u,v)\in E(G)$ and $C_v:=\sum_{u:\; (u, v)\in E(G)}c(u, v)$.

For any electric network $(G, \c)$, the weighted spanning tree model is defined as the following: 
\begin{defi}\label{defi:wst}
The \emph{weighted spanning tree $\wst(\c)$} assigns to any spanning tree $T$ of $G$ the probability
$$\P(\wst(\c)=T):= \frac{\prod_{e\in T}c(e)}{Z(\c)} \text{ with }Z(\c):=\sum_{\substack{T'\text{ is a}\\\text{ spanning tree of } G}}\prod_{e\in T'}c(e).$$    
\end{defi}
This model can be generated easily using random walks on electric networks \cite[Subsection 4.1]{PTN}; the two famous algorithms are Wilson's \cite{wilson1996generating} and Aldous-Broder algorithms \cite{aldous1990random, broder1989generating}, making the model accessible for study.

For $(G, \mathbf{1}_{E(G)})$, this is the classical uniform spanning tree model $\ust(G)$. Both the global properties, e.g., the diameter \cite{szekeres1983distribution, aldous, peres2004scaling, chung2012diameter, michaeli2021diameter, alon2022diameter} and the scaling limit \cite{aldous1991continuumI, aldous1991continuumII, aldous1993continuum, archer2024ghpHigh, archer2024ghpDense}, and the local properties \cite{grimmett1980random, nachmias2022local, hladky2018local} of $\ust(G)$ on several graphs $G$ are understood.

For our first theorem, we have to define \emph{high degree almost regular} sequences of electric networks $\{(G_n, \c_n)\}_{n\ge 1}$: 
\begin{defi}\label{def:almost_reg}
We call a sequence of electric networks $\{(G_n, \c_n)\}_{n\ge 1}$ \emph{high degree almost regular} with parameter $\gamma_n$ if, as $n\rightarrow \infty$,
\begin{itemize}
    \item[$\circ$] at least $(1-o(1))|V(G_n)|$ of the vertices $v$ of $G_n$ have conductances $C_v=(1\pm o(1))\gamma_n$, 
    \item[$\circ$] the sum of the conductances of the vertices is $\sum_{v\in V(G_n)}C_v=(1\pm o(1))|V(G_n)|\gamma_n$, and
    \item[$\circ$] $\lim_{n\rightarrow \infty}\sup_{(u, v)\in E(G_n)}c(u,v)/\gamma_n=0$.
\end{itemize}
\end{defi}

This definition for $(G_n, \mathbf{1}_{E(G_n)})$ gives back the high degree almost regular sequences of graphs from \cite{nachmias2022local} with parameter $d_n=\gamma_n$: the first two conditions of Definition \ref{def:almost_reg} are word by word the two conditions of almost regularity \cite{nachmias2022local}, and our third condition for $c\equiv 1$ is equivalent to $d_n\rightarrow \infty$ meaning that the sequence of graphs is of high degree.

In this paper, we are interested in the local properties---especially the local (weak) limit or Benjamini--Schramm limit---of the $\wst$'s. For the definition of local limits of graph sequences, see Subsection~\ref{subsec:background}.

The local limit of $\ust(K_n)$ for the complete graph $K_n$ is the Poisson(1) Galton-Watson tree conditioned to survive forever \cite{kolchin1977branching, grimmett1980random}. In \cite{nachmias2022local}, it was shown that the local limit of $\ust(G_n)$ for any \emph{high degree almost regular} sequence $\{G_n\}_{n\ge 1}$ of connected, simple graphs is the same as the one for $G_n=K_n$. In \cite{hladky2018local}, they considered graph sequences $G_n$ with a nondegenerate graphon limit, and they showed that the local limit of $\ust(G_n)$ exists and it is a multitype branching tree which can be determined from the limiting graphon of the sequence $\{G_n\}_n$.

Our first result is a generalization of \cite{nachmias2022local} to weighted spanning trees. We prove this result in Subsection \ref{subsubsection:local-limit_balanced}.
\begin{theorem}\label{thm:local_ust_almost_reg}
We consider a high degree almost regular connected electric network $(G_n, \c_n)$ on a simple graph $G_n$. Then the local limit of $\wst(\c_n)$ is the Poisson(1) Galton-Watson tree conditioned to survive forever.
\end{theorem}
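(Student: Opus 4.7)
The plan is to adapt the Wilson's-algorithm strategy of \cite{nachmias2022local} to the weighted setting, with the simple random walk replaced throughout by the weighted random walk on $(G_n,\c_n)$ whose transition probabilities are $p(u,v)=c(u,v)/C_u$. Wilson's algorithm still produces $\wst(\c_n)$ under these transitions, so one can reveal the ball $B_{\wst(\c_n)}(\rho_n,r)$ around a uniformly random vertex $\rho_n$ by a sequence of loop-erased weighted random walks in a convenient exploration order (first revealing the neighbors of $\rho_n$, then the next generation, and so on).

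The first block of work translates the three clauses of Definition \ref{def:almost_reg} into electric-network estimates on $(G_n,\c_n)$. Using Thomson's and Nash--Williams' principles together with the small edge-conductance hypothesis, every edge $(u,v)\in E(G_n)$ should satisfy
\[
\Reff(u,v)=(1+o(1))\left(\frac{1}{C_u}+\frac{1}{C_v}\right),
\]
which by Kirchhoff's formula gives $\P((u,v)\in\wst(\c_n))=(1+o(1))(c(u,v)/C_u+c(u,v)/C_v)$; in particular the expected $\wst$-degree of a typical vertex (one with $C_v=(1\pm o(1))\gamma_n$) is $2+o(1)$. One also needs quantitative short-time hitting probability bounds showing that from a typical vertex the walk is very unlikely to visit any given small set within $o(\gamma_n)$ steps, again obtained from Definition \ref{def:almost_reg} via the commute-time identity. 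With these inputs, a standard factorial-moment / Poisson-limit calculation shows that for each vertex $v$ revealed during the exploration, the number of additional children converges in distribution to $\mathrm{Poisson}(1)$, while the single distinguished ``outward'' child at each level gives rise in the limit to the spine of Kesten's conditioned Galton--Watson tree.

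The main obstacle --- and, as I read it, the place where the gap in \cite{nachmias2022local} lies --- is the inductive conditioning step. Once a depth-$d$ subtree $\mathcal{T}_d$ has been revealed, the conditional law of the remainder of $\wst(\c_n)$ is described by Wilson's algorithm on the network obtained by contracting $V(\mathcal{T}_d)$ to a single super-vertex, which can violate the third (and most delicate) condition of Definition \ref{def:almost_reg} at that super-vertex, so the hypothesis is not literally preserved along the induction. I expect to close the gap by never contracting explicitly: work throughout with the original $(G_n,\c_n)$, express all conditional probabilities at step $d+1$ in terms of Green's functions on $(G_n,\c_n)$ with the constant-size set $V(\mathcal{T}_d)$ grounded as ``roots,'' and show directly that these restricted Green's functions still satisfy the effective-resistance estimate above up to a $(1+o(1))$ factor uniform in $d=0,1,\ldots,r$. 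This keeps the estimates driving the Poisson limit valid at every finite depth and yields the claimed local limit.
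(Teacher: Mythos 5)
Your proposal contains a genuine error at its first step, and it misdiagnoses the gap in \cite{nachmias2022local}.

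The claim that \emph{every} edge $(u,v)\in E(G_n)$ satisfies
\[
\Reff(u\leftrightarrow v)=(1+o(1))\left(\frac{1}{C_u}+\frac{1}{C_v}\right)
\]
is false under almost regularity, and this is precisely where the subtlety lies. Nash--Williams gives only the lower bound $\Reff(u\leftrightarrow v)\ge (1-o(1))(1/C_u+1/C_v)$; the matching upper bound is not an edge-by-edge statement but an averaged one coming from Foster's theorem, $\sum_{(u,v)\in E(G_n)}c(u,v)\Reff(u\leftrightarrow v)=|V(G_n)|-1$, which only controls the $c$-weighted sum of the errors. A concrete counterexample to your pointwise claim is exactly the one in Remark~\ref{remark:NP_countereg}: take a $d_n$-regular $G_n$ on $n$ vertices, append $m_n$ new vertices each with $f_n\ll d_n$ edges into $G_n$, with $n/d_n \ll m_n\ll n$. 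The resulting graph is $d_n$-almost regular, yet the $m_n f_n\gg n f_n/d_n$ new edges all have $\Reff\ge 1/(f_n+1)\gg 2/d_n$, so they fail your asymptotic. This is also why Lemma~3.1' of \cite{nachmias2022local} is wrong: the bound on $|E(G_n)\setminus\tE_n(\eps_n)|$ that holds in the regular case does not carry over to the almost regular case. The gap you point to --- that contracting a revealed subtree might break almost regularity at the super-vertex --- is not the problem; the spatial Markov property lets one work with $\Reff(v_i\leftrightarrow\{v_1,\ldots,v_{i-1}\})$ on the original network without ever needing the contracted graph to be almost regular.

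The actual fix in the paper (Lemma~\ref{lemma_main:Reff_edge}\,c) replaces the cardinality bound $|E(G_n)\setminus\tE_n(\eps_n)|$ by a bound on the $c\cdot\Reff$-weighted size $\sum_{(u,v)\notin\tE_n(4/\gamma_n)}c(u,v)\Reff(u\leftrightarrow v)=O(\delta_n)|V(G_n)|$, which by Kirchhoff's formula is $\E[|\wst(\c_n)\setminus\tE_n(4/\gamma_n)|]$; this weighted quantity \emph{is} controlled by Foster's theorem even when the raw edge count is not. From there the proof proceeds not by a Wilson-algorithm induction in depth but by the effective-resistance formula combined with the spatial Markov property and an inclusion--exclusion over edges leaving the candidate ball, which yields a closed-form expression $F_{\c_n}(T(\mathbf{v}_k))\prod_j c(v_{p_T(j)},v_j)$ for $\P(B_{\wst(\c_n)}(v_1,r)=T(\mathbf{v}_k))$ (Lemma~\ref{lemma_main:Reff_tree_bal}); the factor $e^{-\sum_j b(v_j)}$ emerges from summing the alternating series, and is then shown to converge to the Poisson(1)-GW-conditioned-to-survive probabilities. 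You should redo your first block: prove only the one-sided Nash--Williams bound and the averaged Foster-type bound, define the set of ``bad'' edges in terms of $\Reff>4/\gamma_n$, and show that the $\wst$-mass on bad edges is $o(|V(G_n)|)$ rather than trying to show that every edge is good. Without that correction, the factorial-moment computation in your second block rests on an estimate that fails for a non-negligible (in raw cardinality) set of edges.
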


The novelty of this part of our paper is the following:
\begin{itemize}
    \item[$\circ$] We generalize the method of \cite{nachmias2022local} from $\ust$'s to $\wst$'s, which is essential for our weighted spanning tree model $\wst^{\beta_n}(G_n)$ in a random environment defined in Definition~\ref{def:wst_beta}.
    \item[$\circ$] We correct a gap in the proof of \cite{nachmias2022local}. In \cite{nachmias2022local}, the local limit result was first proven for high degree regular sequences, then they described how to relax regularity to almost regularity in the proof. In Remark \ref{remark:NP_countereg}, we detail that, although the proof of \cite{nachmias2022local} works well for high degree \emph{regular} sequences, the error terms become more delicate for \emph{almost regular} sequences. They have also noticed this gap [Personal communication]. During the correction of this mistake, we are following the outline of the original proof of \cite{nachmias2022local}; however, we make some nontrivial, careful modifications of the lemmata and the choices of the definitions of `typical behavior' in many cases.

    For our $\wst^{\beta_n}(G_n)$---defined in Definition~\ref{def:wst_beta}---purposes, it is essential, even for regular $G_n$'s, that the local limit result holds not only for regular, but also for almost regular sequences because of the randomness of the environment.
    
    \item[$\circ$] We generalize the method of \cite{nachmias2022local} to high degree almost \emph{balanced} sequences in Theorem~\ref{thm:local_det} which can be thought as a common generalization of Theorem 1.1 of \cite{nachmias2022local} and Lemma 3.12 of \cite{hladky2018local}. Since our formulae are not as explicit as in \cite{nachmias2022local}, we also had to deal with some extra technical difficulties; e.g., we had to show asymptotics instead of upper bounds in several statements.

    In \cite{hladky2018local}, the structure of the graphs $G_n$ is well-understood: due to Szemerédi’s regularity lemma-like graph partitioning techniques, any dense graph that is close to a nondegenerate graphon can be decomposed into a bounded number of dense expanders such that there are $o(n^2)$ edges between each pair of expanders; the behavior of random walks and effective resistances, and therefore the local limit of the $\ust$'s can be obtained from this decomposition. Our Theorem~\ref{thm:local_det} does not use this strong knowledge of the structure of $G_n$, but rather a natural condition of being almost balanced, so it might be helpful in the future to understand the local convergence of $\ust(G_n)$ for almost balanced intermedate convergent graph sequences $\{G_n\}_n$, e.g., in the sense \cite{frenkel2018convergence}.
\end{itemize}

\subsection{\texorpdfstring{$\wst$}{}'s on random environments}
For $\beta\ge 0$, we consider the following spanning tree model on $G$.

\begin{defi}\label{def:wst_beta}
We consider a graph $G$ and $\beta\ge 0$. Given some i.i.d.~$\unif[0,1]$ random labels $\mathbf{U}:=\{U_e\}_{e\in E(G)}$, we define our random weights as $c_{\beta}(e):=\exp(-\beta U_e)$ and our random electric network as having conductances $\c_{G, \beta}:=\{c_{\beta}(e)\}_{e\in E(G)}$. The probability of a spanning tree $T$ of $G$ under the 
 \emph{weighted spanning tree} model $\wst^{\beta}(G)$ is defined as
 \begin{align*}
     \P\left(\wst^{\beta}(G)=T\Given \mathbf{U}\right)=\P\left(\wst(\c_{G, \beta})=T\Given\mathbf{U}\right).
 \end{align*}
\end{defi}
Note that, for any fixed connected graph $G$, the one-parameter family $\{\wst^{\beta}(G)\}_{\beta\in [0, \infty)}$ interpolates between the two classic spanning tree models: for $\beta=0$, our environment is $\c_{G, \beta}\equiv 1$, so our model is $\ust(G)$, while as $\beta\rightarrow \infty$, this random spanning tree model tends weakly to the unique spanning tree $T$ minimizing $\sum_{e\in T}U_e$ which is usually called the minimum spanning tree of $G$ abbreviated as $\mst(G)$.

The $\ust$'s and the $\mst$'s are well-studied. For a brief collection of references on the history of $\ust$'s, see Subsection \ref{subsec:intro_wst_on_det}. The $\mst$'s can be generated by Kruskal's and Prim's algorithm and the properties of the $\mst$'s can be analyzed by studying the Bernoulli percolation. The diameter \cite{addario2009critical} and the scaling limit \cite{addario2017scaling} of $\mst(K_n)$ and the diameter of the $\mst$ of the random 3-regular graph \cite{addario2021geometry} are understood. It is shown for the complete graph and for $d_n\rightarrow \infty$-regular graphs $G_n$, that the local limit of $\mst(G_n)$ is the wired minimal spanning forest on the Poisson-weighted infinite tree \cite{addario2013local}. The asymptotics of the expected total length $\E\left[\sum_{e\in \mst(G_n)}U_e\right]$ for $G_n=K_n$ \cite{frieze1985value, aldous1990random, steele2002minimal} and for robust sequences of dense graphs $G_n$ tending to a graphon \cite{hladky2023random} is studied.

The model $\wst^{\beta}(G)$ has recently been introduced in \cite{makowiec2024diameter, kusz2024diameter} and the behaviors of several properties of $\wst^{\beta_n}(K_n)$ have been determined, including the phase transition around $\beta_n=n^{3+o(1)}$ regarding the agreement of Aldous-Broder algorithm generating $\wst^{\beta_n}(K_n)$ and Prim's invasion algorithm generating $\mst(K_n)$; the phase transition around $\beta_n=n^{2+o(1)}$ regarding the agreement of the models $\mst(K_n)$ and $\wst^{\beta_n}(K_n)$ \cite{kusz2024diameter}; and the phase transition around $\beta_n=n^{1+o(1)}$ regarding the local properties---the local limit, the expected edge overlaps and the expected total length---around the $\beta_n=n^{1+o(1)}$ \cite{makowiec2024diameter, kusz2024diameter}. For the collection of results of this model and some more results on the model on the Euclidean lattices, see \cite{makowiec2025observables}.

For our model, it is proven \cite{makowiec2024diameter, kusz2024diameter} that the typical diameter of $\wst^{\beta_n}(K_n)$ grows as $\Theta(n^{1/2})$ for $\beta_n\le n^{1+o(1)}$ likewise for $\ust(K_n)$ \cite{michaeli2021diameter}, and it grows as $\Theta(n^{1/3})$ for $\beta_n\ge n^{4/3+o(1)}$ similarly to $\mst(K_n)$ \cite{addario2009critical}. For $\beta_n=n^{\alpha}$ with $1<\alpha<4/3$, it is conjectured that the typical diameter has an exponent strictly between 1/2 and 1/3. For the diameter of a related $\wst$ model on some random environments of bounded degree graphs and on a random two-conductance electric network on the complete graph, see \cite{makowiec2023diameter} and \cite{makowiec2025new}.

In this paper, we study the question whether there is a phase transition of the local properties of $\wst^{\beta_n}(G_n)$ if $G_n$ is a sequence of graphs of growing degrees. It is natural to ask this phase transition question on high degree almost balanced sequences of graphs:
\begin{defi}
We call a sequence of electric networks $\{(G_n, \c_n)\}_{n\ge 1}$ \emph{high degree almost balanced} with parameters $\gamma_n$ and $K>1$ if, as $n\rightarrow \infty$,
\begin{itemize}
    \item[$\circ$] at least $(1-o(1))|V(G_n)|$ of the vertices $v$ of $G_n$ have conductances $C_v\in [\gamma_n, K\gamma_n]$,
    \item[$\circ$] the sum of the conductances $\notin [\gamma_n, K\gamma_n]$ of $G_n$ is $o(1)\gamma_n|V(G_n)|$, and
    \item[$\circ$] $\lim_{n\rightarrow \infty}\sup_{(u, v)\in E(G_n)}c(u,v)/\gamma_n=0$.
\end{itemize}
We call a sequence of graphs $\{G_n\}_{n\ge 1}$ \emph{high degree almost balanced} with parameters $d_n$ and $K>1$ if the electric network $(G_n, \mathbf{1}_{|E(G_n)})$ is almost balanced with parameters $\gamma_n=d_n$ and $K$.
\end{defi}
For the local limit of $\wst^{\beta_n}(G_n)$ on any high degree almost balanced sequence of simple graphs, it turns out that a phase transition happens around $\beta_n=d_n |V(G_n)|^{o(1)}$, regardless the structure of the graphs, assuming that the local limits of $\ust(G_n)$ and $\mst(G_n)$ exist. In more detail, for $\beta_n\ll d_n$, we show that our networks are examples of high degree almost balanced electric networks. On the other hand, for $\beta_n\gg d_n\log|V(G_n)|$, we show that the conductances of the edges become so different that $\wst^{\beta_n}(G_n)$ and $\mst(G_n)$ typically differ only by $o(|V(G_n)|)$ edges, resulting in the agreement of the local limits.

Our general approach has another advantage: we could decrease the window of phase transition of \cite{makowiec2024local} even in the special case $G_n=K_n$, since our computation requires milder conditions on $\c_{K_n, \beta_n}$. For the complete graph $K_n$, we improve the parameters in the $\ust$-like phase to $\beta_n\ll n$ from $\beta_n\ll n/\log n$ for the local convergence and the expected edge overlaps; and in the $\mst$-like phase to $\beta_n\gg n\log n$ from $\beta_n\gg n\log^2 n$ for the expected edge overlaps, from $\beta_n\ll n\log^5 n$ for the expected total length, and from $\beta_n\gg n\log(n)^\lambda$ with $\lambda\rightarrow \infty$ arbitrary slowly for the local limit.

\subsubsection{Local limit for high degree almost balanced graphs}
\begin{theorem}\label{thm:local_ust}
    We consider an almost balanced sequence of simple connected graphs $G_n$ of parameters $d_n\rightarrow \infty$ and $K>1$ such that the local limit of $\ust(G_n)$ exists.
    
    If $\beta_n\ll d_n$, then the local limit of $\wst^{\beta_n}(G_n)$ exists and it is the same as the one for $\ust(G_n)$.
\end{theorem}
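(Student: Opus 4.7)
The plan is to reduce Theorem \ref{thm:local_ust} to Theorem \ref{thm:local_det} by showing that, conditional on the environment $\mathbf{U}$ and with high probability, $(G_n, \c_{G_n, \beta_n})$ is itself a high degree almost balanced electric network on the same underlying simple graph $G_n$.

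First I would analyze the vertex conductances under the random weights $c_\beta(e) = e^{-\beta_n U_e}$, which are i.i.d., bounded in $[e^{-\beta_n}, 1]$, with mean $\mu_\beta := (1-e^{-\beta_n})/\beta_n$ and second moment $O(\mu_\beta)$. For a vertex $v$ with $\deg(v) \in [d_n, K d_n]$, Bernstein's inequality gives
\begin{equation*}
\P\!\left(|C_v^{\beta} - \deg(v)\mu_\beta| > \delta\, \deg(v)\mu_\beta\right) \le 2\exp\!\left(-\Omega(\delta^2 \deg(v)\mu_\beta)\right),
\end{equation*}
and $\deg(v)\mu_\beta \gtrsim d_n/\max(1,\beta_n) \to \infty$ because $\beta_n \ll d_n$. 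A union bound over the typical vertices, combined with a Markov-type estimate for the conductance contribution of the atypical vertices, yields the first two conditions of Definition 1.3 with scale $\tilde\gamma_n := \mu_\beta d_n$ and balance constant $K$ (inflated by a $1 + o(1)$ factor if needed). The third condition is immediate: $\sup_e c_\beta(e) \le 1 = o(\tilde\gamma_n)$, using $\tilde\gamma_n \gtrsim d_n/\max(1, \beta_n) \to \infty$.

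On this high-probability event, Theorem \ref{thm:local_det} applies to $(G_n, \c_{G_n, \beta_n})$ and yields existence of a local limit of $\wst^{\beta_n}(G_n) = \wst(\c_{G_n, \beta_n})$. The main obstacle I anticipate is identifying this limit with the local limit of $\ust(G_n)$, since the formula supplied by Theorem \ref{thm:local_det} depends a priori on the entire conductance structure, not just on $G_n$. My strategy is to exploit the concentration $C_v^\beta/\tilde\gamma_n = (1 + o(1))\, C_v/d_n$ on typical vertices, together with $\sup_e c_\beta(e)/\tilde\gamma_n \to 0$, to argue that the neighborhood probabilities produced by Theorem \ref{thm:local_det} for $(G_n, \c_{G_n, \beta_n})$ and for $(G_n, \mathbf{1}_{E(G_n)})$ differ by $o(1)$; the assumed existence of the local limit of $\ust(G_n)$ then pins down the common limit.

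Finally, integrating over $\mathbf{U}$ by bounded convergence (the neighborhood probabilities are trivially bounded by $1$ on the complement of the high-probability event) transfers the conditional convergence to the unconditional statement of Theorem \ref{thm:local_ust}.
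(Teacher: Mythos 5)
Your high-level reduction matches the paper's: show that, conditionally on $\mathbf{U}$ and with high probability, $(G_n, \c_{G_n,\beta_n})$ satisfies Assumption~\ref{assum:for_local_lim} (this is Lemma~\ref{lemma:rdm_cond_balanced} in the paper, proved via Chebyshev rather than Bernstein, but that difference is cosmetic), and then invoke Theorem~\ref{thm:local_det}. Where your proposal is genuinely incomplete is exactly the step you flag as the main obstacle: identifying the limit.

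You propose to deduce the $o(1)$-closeness of the two neighborhood-probability formulae from the concentration of $C_v^\beta$ together with $\sup_e c_\beta(e)/\tilde\gamma_n\to 0$. This is not enough, for two reasons. First, the formula $F_{\c}(T(\mathbf{v}_k))$ depends not only on the vertex conductances $C_v$ but also on $b_{\c}(v)=\sum_{u\sim v}c(v,u)/C_u$, which appears inside the exponential. Concentration of $b_{\c_{\beta_n}}(v)$ around $b_{G_n}(v)$ is a separate and subtler fact---it is a sum of ratios of correlated random sums and its control (Lemma~\ref{lemma:b-cond_concentration}) requires decomposing the neighborhood into typical and atypical parts and carefully bounding the contribution of each; it does not follow from the concentration of $C_v^\beta$. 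Second, even granting pointwise concentration of both $C_v^\beta$ and $b_{\c_{\beta_n}}(v)$, the quantity produced by Theorem~\ref{thm:local_det} is the sum over tuples $\mathbf{v}_k\in\C(W_n^{\tc})$ of $F_{\c_{\beta_n}}(T(\mathbf{v}_k))\prod_j c_{\beta_n}(v_{p_T(j)},v_j)$, and you need this sum to concentrate around a deterministic value for almost all environments $\mathbf{U}$ (so that the final bounded-convergence step is valid). The summands share edges and hence are positively correlated; the paper's Lemma~\ref{lemma:F_annealed-quenched} computes the variance explicitly, exploiting positive covariance, to obtain the quenched statement~\eqref{formula:F_quenched}. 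Your sketch does not indicate how you would control this sum. So your first step is right and aligned with the paper, but the second step---the identification of the two sums---is where the real work lies, and your proposal only gestures at it without supplying the two missing ingredients: concentration of $b_{\c_{\beta_n}}(v)$ and a second-moment (variance) argument for the sum over tuples.
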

For deterministic networks, see Theorem \ref{thm:local_det}. The proof of Theorem \ref{thm:local_ust} can be found in Subsection \ref{subsubsection:local-limit_balanced}.

\begin{theorem}\label{thm:local_mst}
    We consider a sequence of simple connected graphs $G_n$ such that the local limit of $\mst(G_n)$ exists.

    If $\beta_n\gg \frac{|E(G_n)|}{|V(G_n)|}\log |V(G_n)|$, then the local limits of $\wst^{\beta_n}(G_n)$ and $\mst(G_n)$ agree.
\end{theorem}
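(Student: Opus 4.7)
The plan is to couple $\wst^{\beta_n}(G_n)$ and $\mst(G_n)$ through the shared uniform labels $\mathbf{U}$ so that $\mst(G_n)$ is $\mathbf{U}$-measurable, and to show
\[
\E\bigl|\wst^{\beta_n}(G_n)\triangle\mst(G_n)\bigr| = o(|V(G_n)|).
\]
Once this bound is in hand, the agreement of local limits is a standard averaging argument: for a uniformly sampled root $v$ and fixed radius $r$, the $r$-neighborhoods of $v$ in the two trees agree unless some edge of $\wst\triangle\mst$ lies in the $r$-ball around $v$ (in one of the trees), so by Markov and the tightness of the ball sizes in the assumed $\mst$-local limit,
\[
\P\bigl(B_r^{\wst^{\beta_n}}(v)\neq B_r^{\mst}(v)\bigr)\;\lesssim\;\frac{\E|\wst\triangle\mst|}{|V(G_n)|}\cdot\E\bigl|B_r^{\mst}(v)\bigr|\;\longrightarrow\;0.
\]

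Since both trees have $|V(G_n)|-1$ edges, $|\wst\triangle\mst|=2\sum_{e\notin\mst}\mathbf{1}(e\in\wst)$, so the problem reduces to bounding $\E\sum_{e\notin\mst}\P(e\in\wst^{\beta_n}\mid\mathbf{U})$. The transfer-current identity gives $\P(e\in\wst^{\beta_n}\mid\mathbf{U})=c_{\beta_n}(e)\Reff(e;\c_{G_n,\beta_n})$, and for $e\notin\mst$, Rayleigh monotonicity applied to the subnetwork $\mst\cup\{e\}$ (placing the direct edge $e$ in parallel with the unique $\mst$-path $P_e$) yields
\[
\P(e\in\wst^{\beta_n}\mid\mathbf{U})\;\le\;c_{\beta_n}(e)\,r(P_e)\;=\;\sum_{f\in P_e}e^{-\beta_n(U_e-U_f)},
\]
every summand being in $(0,1)$ by the MST cycle property $U_e>U_f$. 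Swapping $(e,f)$ to sum over MST edges,
\[
\sum_{e\notin\mst}\sum_{f\in P_e}e^{-\beta_n(U_e-U_f)}\;=\;\sum_{f\in\mst}\sum_{e\in C_f\setminus\{f\}}e^{-\beta_n(U_e-U_f)},
\]
where $C_f$ is the cut of $G_n$ across the two components of $\mst\setminus\{f\}$ and the cut property gives $U_e>U_f$ on $C_f\setminus\{f\}$.

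Taking expectations, conditioning on $|C_f|$ and using that under $f\in\mst$ the labels on $C_f\setminus\{f\}$ are iid uniform on $[U_f,1]$ contributes an inner expectation of order $|C_f|/\beta_n$. The double-counting identity
\[
\sum_{f\in\mst}|C_f|=(|V(G_n)|-1)+\sum_{e\notin\mst}|P_e|
\]
then reduces everything to controlling the expected total $\mst$-stretch $\E\sum_{e\notin\mst}|P_e|$.

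The main obstacle is making this stretch control sharp enough to match the threshold $\beta_n\gg\tfrac{|E(G_n)|}{|V(G_n)|}\log|V(G_n)|$ without any structural assumption on $G_n$: the naive bound $|P_e|\le|V(G_n)|$ loses a polynomial factor and must be replaced by a finer analysis, splitting on whether the gap $\Delta_e:=U_e-\max_{f\in P_e}U_f$ is small or large and exploiting that only the top few MST-path edges contribute non-negligibly to the exponential sum; the extra $\log|V(G_n)|$ factor in the threshold arises as the cost of a union bound in the large-$\Delta_e$ regime.
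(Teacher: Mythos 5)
Your overall strategy matches the paper's: bound $\E|\wst^{\beta_n}\triangle\mst|=o(|V(G_n)|)$ and then conclude local-limit agreement by Markov plus tightness of $r$-balls in the $\mst$-limit (this is exactly Proposition~\ref{prop:big_edge_overlap_local_limit}). Your route to the overlap bound, however, is different: you go directly via Kirchhoff and Rayleigh to get $\P(e\in\wst\mid\mathbf U)\le\sum_{f\in P_e}e^{-\beta_n(U_e-U_f)}$, whereas the paper invokes the black-box bound of Proposition~\ref{prop:wst_vs_mst} (Proposition~2.16 of \cite{kusz2024diameter}) in terms of $\eps$-significant edges, then estimates their expected number using Lemma~\ref{lemma:conditioning}. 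Your starting inequality is correct and arguably cleaner, but the proof has a genuine gap precisely where you flag it.

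The gap is the double-counting step. The identity $\sum_{f\in\mst}|C_f|=|V(G_n)|-1+\sum_{e\notin\mst}|P_e|$ is fine, but the $\mst$-stretch $\sum_{e\notin\mst}|P_e|$ has no useful bound without structural assumptions on $G_n$ (e.g.\ if $\mst$ is a path it can be $\Theta(|E(G_n)||V(G_n)|)$), so the reduction to ``order $|C_f|/\beta_n$'' does not close. Moreover, the step ``under $f\in\mst$ the labels on $C_f\setminus\{f\}$ are iid uniform on $[U_f,1]$'' is not accurate: by Lemma~\ref{lemma:conditioning} they are independent $\unif[m_e,1]$ with $m_e=\max_{g\in P_e}U_g\ge U_f$ depending on the whole path, not just on $f$; the cruder claim is only a stochastic lower bound for $U_e$, which works in your favor for the upper bound, but the resulting estimate $\E[e^{-\beta_n(U_e-U_f)}]\lesssim 1/(\beta_n(1-m_e))$ blows up when $m_e$ is close to $1$. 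This bottleneck regime is exactly what the paper's proof handles carefully (the bound $\P(U_f\ge1-\eps_n\mid\Omega_T)\le\eps_n^{N_T(f)}$ summed over $f\in\mst$), and your sketch does not address it: your ``large $\Delta_e$'' regime is the easy one where the exponential is tiny, while the bottleneck problem lives in the small-$\Delta_e$ regime when $1-m_e$ is small. Also note that ``only the top few path edges contribute'' implicitly reduces you to the paper's notion of $|V(G_n)|^{-\kappa}$-significant edges, so the heavy lifting you have deferred is precisely the content of the paper's Theorem~\ref{thm:edge_overlap_mst}.
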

The proof of Theorem \ref{thm:local_mst} can be found in Subsection \ref{subsec:mstlike_local_limit}.

For any high degree almost balanced sequence $G_n$, we have that $\frac{|E(G_n)|}{|V(G_n)|}=\Theta(d_n)$, so there is a phase transition around $\beta_n=d_n|V(G_n)|^{o(1)}$ regarding the local limits:
\begin{corollary}[Phase transition of the local limit of $\wst^{\beta_n}(G_n)$ on high degree almost balanced graphs]
    We consider an almost balanced sequence of simple connected graphs $G_n$ of parameters $d_n\rightarrow \infty$ and $K>1$ such that the local limits of $\ust(G_n)$ and $\mst(G_n)$ exist.
    \begin{itemize}
        \item[a)] If $\beta_n\ll d_n$, then the local limits of $\wst^{\beta_n}(G_n)$ and $\ust(G_n)$ agree.
        \item[b)] If $\beta_n\gg d_n\log |V(G_n)|$, then the local limits of $\wst^{\beta_n}(G_n)$ and $\mst(G_n)$ agree.
    \end{itemize}
\end{corollary}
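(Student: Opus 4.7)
The corollary is a straightforward combination of Theorems \ref{thm:local_ust} and \ref{thm:local_mst}, so my plan is simply to match the hypotheses of each theorem to the hypotheses of the corollary, and to derive the edge-count asymptotics that bridges the statements.

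For part (a), the hypothesis $\beta_n \ll d_n$ together with the fact that $G_n$ is high degree almost balanced and the local limit of $\ust(G_n)$ exists is precisely the hypothesis of Theorem \ref{thm:local_ust}, so part (a) is immediate.

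For part (b), the only thing to verify is that the hypothesis $\beta_n \gg d_n \log|V(G_n)|$ implies the stronger-looking hypothesis $\beta_n \gg \frac{|E(G_n)|}{|V(G_n)|}\log|V(G_n)|$ of Theorem \ref{thm:local_mst}. The plan is to show the deterministic bound
\[
\frac{|E(G_n)|}{|V(G_n)|} = \Theta(d_n)
\]
for high degree almost balanced sequences. Writing $V_n := V(G_n)$ and using $2|E(G_n)| = \sum_{v\in V_n} \deg(v) = \sum_{v\in V_n} C_v$ (since we are in the unweighted case $\c_n \equiv \mathbf{1}_{E(G_n)}$), the first bullet of the balancedness definition gives a lower bound $\sum_v C_v \ge (1-o(1))|V_n|\cdot d_n$, while the first and second bullets together yield an upper bound $\sum_v C_v \le K|V_n|d_n + o(1)|V_n| d_n$. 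Dividing by $2|V_n|$ gives $|E(G_n)|/|V_n| \in [(1/2-o(1))d_n,\, (K/2+o(1))d_n]$, which is the required $\Theta(d_n)$ asymptotic.

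With this asymptotic in hand, $\beta_n \gg d_n \log|V_n|$ is equivalent to $\beta_n \gg \frac{|E(G_n)|}{|V_n|}\log|V_n|$, so Theorem \ref{thm:local_mst} applies and gives the agreement of the local limits of $\wst^{\beta_n}(G_n)$ and $\mst(G_n)$. There is no substantive obstacle here; the corollary is essentially a repackaging of the two theorems together with the edge-count bound above.
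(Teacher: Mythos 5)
Your proposal is correct and takes essentially the same approach as the paper: both parts are direct applications of Theorems \ref{thm:local_ust} and \ref{thm:local_mst}, and the paper itself bridges part (b) with exactly the observation $\frac{|E(G_n)|}{|V(G_n)|}=\Theta(d_n)$ for almost balanced sequences, which you derive correctly from the first two bullets of the balancedness definition.
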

Notices that any $d_n\rightarrow \infty$ regular sequence satisfy the conditions of this corollary by \cite{nachmias2022local} and \cite{addario2013local}.

\subsubsection{Expected edge overlaps for high degree graphs}

\begin{defi*}\label{def:edge_overlap}
    Conditioning on $\{U_e\}_{e}$, we choose two independent  $\mathcal{T}, \mathcal{T}'\sim \wst(\c_{G, \beta})$ for $\c_{G, \beta}=\{\exp(-\beta U_e)\}_{e\in E(G)}$, and we denote by $\mathcal{O}(G, \beta):=\E[\E[\mathcal{T}\cap \mathcal{T}'\given \{U_e\}_{e\in E(G)}]]$ the expected edge overlaps. 
\end{defi*}

\begin{theorem}\label{thm:edge_overlap_ust} We consider $d_n\rightarrow \infty$, $\delta_n=O(d_n^{-1})$ and a sequence of simple connected graphs $\{G_n\}_n$ satisfying $|\{v\in V(G_n):\; \deg(v)\ge d_n\}|\ge (1-\delta_n)|V(G_n)|$. 
        
        If $\beta_n\ll d_n$, then $$\mathcal{O}(G_n, \beta_n)=O(\delta_n)|V(G_n)|.$$
\end{theorem}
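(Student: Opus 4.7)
The plan is to start with
$$\mathcal{O}(G_n,\beta_n)=\mathbb{E}\!\left[\sum_{e\in E(G_n)} c_{\beta_n}(e)^2\,\Reff(e;\c_{G_n,\beta_n})^2\right],$$
which follows from conditional independence of $\mathcal{T},\mathcal{T}'$ given $\mathbf{U}$ and Kirchhoff's formula $\mathbb{P}((u,v)\in\mathcal{T}\mid\mathbf{U})=c_{\beta_n}(u,v)\,\Reff(u,v;\c_{G_n,\beta_n})$. Writing $p_{uv}:=c_{\beta_n}(u,v)\,\Reff(u,v;\c_{G_n,\beta_n})$ and grouping by vertex, $2\mathcal{O}=\sum_v \mathbb{E}\bigl[\sum_{u\sim v} p_{uv}^2\bigr]$. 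My workhorse inequality would be $\sum_{u\sim v}p_{uv}^2 \le (\max_{u\sim v}p_{uv})\cdot\mathbb{E}[\deg_{\mathcal{T}}(v)\mid\mathbf{U}]$, since $\sum_{u\sim v}p_{uv}=\mathbb{E}[\deg_{\mathcal{T}}(v)\mid\mathbf{U}]$. I then partition $V(G_n)=G\sqcup B$ with $B:=\{v:\deg_{G_n}(v)<d_n\}$, which has $|B|\le\delta_n|V(G_n)|$ by hypothesis.

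For a good vertex $v\in G$, the assumption $\deg(v)\ge d_n$ combined with $\beta_n\ll d_n$ allows Chernoff/Hoeffding-type concentration of the total conductance $C_v^{\beta_n}=\sum_{u\sim v}e^{-\beta_n U_{uv}}$ around its expectation $\deg(v)(1-e^{-\beta_n})/\beta_n$, which is at least of order $d_n/\max(\beta_n,1)\gg 1$. Combined with the random walk identity $p_{uv}=c_{\beta_n}(u,v)/\bigl(C_v^{\beta_n}\,\mathbb{P}_v(\tau_u<\tau_v^+;\c_{G_n,\beta_n})\bigr)$ and a lower bound on the escape probability $\mathbb{P}_v(\tau_u<\tau_v^+)$ that leverages the many parallel routes out of the high-degree vertex $v$, I aim for $\max_{u\sim v}p_{uv}=O(1/d_n)$ on a high-probability event; summing over $|G|\le|V|$ good vertices then contributes $O(|V|/d_n)$. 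For $v\in B$, I would use the cruder bound $\sum_{u\sim v}p_{uv}^2\le \mathbb{E}[\deg_{\mathcal{T}}(v)\mid\mathbf{U}]$ and argue $\sum_{v\in B}\mathbb{E}[\deg_{\mathcal{T}}(v)]=O(|B|)=O(\delta_n|V|)$, combining the global identity $\sum_v\mathbb{E}[\deg_{\mathcal{T}}(v)]=2(|V|-1)$ with a tail estimate on the number of vertices of atypically large tree-degree (which the local-limit results of Theorems~\ref{thm:local_ust_almost_reg} and~\ref{thm:local_det} should supply).

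The hardest step is the pointwise estimate $\max_{u\sim v}p_{uv}=O(1/d_n)$ for good $v$: there is no universal inequality of the form $\Reff(u,v)\lesssim 1/C_v^{\beta_n}$ (a bridge in the random subnetwork $(G_n,\c_{G_n,\beta_n})$ already violates any such bound), so one must genuinely exploit the high-degree almost-balanced structure at $v$ coming from $\beta_n\ll d_n$. Alternative routes include rewriting $\sum_u p_{uv}^2=\sum_u p_{uv}-\sum_u p_{uv}(1-p_{uv})$ and identifying the second sum with a transfer-current quantity one can bound from below using the network structure, or using the Aldous--Broder characterization to decompose $\mathbb{P}((u,v)\in\mathcal{T}\mid\mathbf{U})$ into first-visit probabilities each controlled by $c_{\beta_n}(u,v)/C_v^{\beta_n}$. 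A secondary technical obstacle is controlling the subset of bad vertices with atypically large expected tree-degree, which requires a finer combinatorial or spectral argument beyond the global degree-sum identity.
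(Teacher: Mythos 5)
Your starting identity $\mathcal{O}=\sum_e\mathbb{E}[p_{uv}^2]$ and the Kirchhoff/Foster bookkeeping are correct, but the core of the proposal --- the pointwise bound $\max_{u\sim v}p_{uv}=O(1/d_n)$ for good $v$ --- is simply false, and the issue you flag at the end is fatal, not just hard. Nothing in the hypotheses prevents a good vertex $v$ (with $\deg(v)\ge d_n$) from being adjacent to a bad leaf $u$; then $(u,v)$ is a bridge of $G_n$, so $p_{uv}=c(u,v)\Reff(u\leftrightarrow v)=1$ deterministically, regardless of $\beta_n$. The randomization $\beta_n\ll d_n$ gives you concentration of $C_v^{\beta_n}$, but it cannot lower $\Reff(u\leftrightarrow v)$ for a bridge: $\Reff$ depends on the graph structure around both endpoints, and the bad neighbor $u$ is exactly where the parallel-route argument breaks. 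Since you split $2\mathcal{O}=\sum_v\sum_{u\sim v}p_{uv}^2$ and put the good-vertex analysis on a pointwise $O(1/d_n)$ bound, the estimate collapses the moment a single good vertex meets a bridge, and there can be $\Theta(\delta_n|V|)$ such bridges.

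The paper dodges the pointwise bound entirely. Writing $f(u):=(1-\delta_n)\mathds{1}_{u\in W_n}/C_u\le 1/\gamma_n$ and using the calculus inequality $x^2\le a^2+2(x-a)$ for $0\le a\le x\le 1$ with $a=c(e)(f(u)+f(v))$ and $x=c(e)\Reff(u\leftrightarrow v)$ (the inequality $a\le x$ is exactly the Nash-Williams lower bound of Lemma~\ref{lemma_main:Reff_edge}~a)), one gets
\begin{align*}
\mathcal{O}(\c_n)\le\sum_{e}c(e)^2(f(u)+f(v))^2+2\sum_{e}c(e)\bigl(\Reff(u\leftrightarrow v)-f(u)-f(v)\bigr).
\end{align*}
The first sum is small because $c(e)\le\delta_n\gamma_n$ and $f\le 1/\gamma_n$ uniformly; the second sum is $O(\delta_n)|V(G_n)|$ by the global Foster's-formula comparison in Lemma~\ref{lemma_main:Reff_edge}~b). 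A bridge contributes $p_{uv}^2=1$, but since $a\le x\le 1$ its contribution is already absorbed in the $2(x-a)$ term and controlled in aggregate, not pointwise. This is precisely the extra aggregation your decomposition lacks: the Foster identity $\sum_e c(e)\Reff(e)=|V|-1$ is used as an equality against a sum of lower bounds, so the total slack $\sum_e c(e)(\Reff(e)-f(u)-f(v))$ is forced to be $O(\delta_n)|V|$ no matter how it is distributed among edges. Your alternative routes (transfer-current rewriting, Aldous--Broder first-visit decomposition) also do not address the bridge obstruction and would likely run into the same wall. The bad-vertex tail you worry about is not a problem for the paper's route either: $\mathds{1}_{u\in W_n}/C_u$ is simply set to $0$ for bad $u$, and the resulting loss is again accounted for inside the Foster slack rather than requiring a separate combinatorial or spectral estimate.

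Finally, the passage from the deterministic statement (Theorem~\ref{thm:edge_overlap_det}) to the $\wst^{\beta_n}(G_n)$ statement (Theorem~\ref{thm:edge_overlap_ust}) is done in the paper by Lemma~\ref{lemma:rdm_cond_balanced}~a), which shows Assumption~\ref{assum:for_edge_overlap} holds with probability $1-o(\delta_n)$ for $\c_{\beta_n}$, plus the trivial bound $|\mathcal{T}\cap\mathcal{T}'|\le|V(G_n)|-1$ on the exceptional event. Your appeal to concentration of $C_v^{\beta_n}$ is the right ingredient for this step, but it belongs at the reduction stage, not inside the pointwise resistance estimate where you placed it.
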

In Theorem \ref{thm:edge_overlap_det}, we give a similar result for electric networks of deterministic conductances. The proof of Theorem \ref{thm:edge_overlap_ust} in Subsection \ref{subsubsec:edge_overlap}.

\begin{theorem}\label{thm:edge_overlap_mst} We consider a sequence of simple connected graphs $\{G_n\}_n$.
        
If $\beta_n\gg \frac{|E(G_n)|}{|V(G_n)|}\log |V(G_n)|$, then $$\mathcal{O}(G_n, \beta_n)=(1-o(1))|V(G_n)|.$$
\end{theorem}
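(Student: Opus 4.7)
The plan is to compare $\wst^{\beta_n}(G_n)$ with $T^\star:=\mst(G_n)$, which is almost surely a deterministic function of $\{U_e\}$. Write $n:=|V(G_n)|$ and $m:=|E(G_n)|$. Since $\mathcal{T},\mathcal{T}'$ are independent spanning trees of size $n-1$ conditionally on $\{U_e\}$, the deterministic bound
$$|\mathcal{T}\cap\mathcal{T}'|\;\ge\;(n-1)-|\mathcal{T}\triangle T^\star|-|\mathcal{T}'\triangle T^\star|$$
together with $\mathcal{T}$ and $\mathcal{T}'$ having the same law reduces the theorem to showing
$$\mathbb{E}\bigl[|\mathcal{T}\triangle T^\star|\bigr]=o(n).$$
The matching upper bound $\mathcal{O}(G_n,\beta_n)\le n-1$ is automatic since any spanning tree has $n-1$ edges.

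The per-edge bound comes from the electrical formalism. Fix $\{U_e\}$ and consider $f\notin T^\star$: the transfer-current identity gives $\mathbb{P}(f\in\mathcal{T}\mid\{U_e\})=c_{\beta_n}(f)\,\Reff(f)$. Let $P_f$ be the unique path in $T^\star$ joining the endpoints of $f$; the MST property forces $U_g<U_f$ for every $g\in P_f$. Rayleigh's monotonicity applied to the subgraph $P_f\cup\{f\}$ and a series/parallel reduction yield
$$\mathbb{P}(f\in\mathcal{T}\mid\{U_e\})\;\le\;c_{\beta_n}(f)\sum_{g\in P_f}\frac{1}{c_{\beta_n}(g)}\;=\;\sum_{g\in P_f}e^{-\beta_n(U_f-U_g)}.$$
Dually, for $f\in T^\star$, Nash--Williams on the cut $F_f$ yields $\mathbb{P}(f\notin\mathcal{T}\mid\{U_e\})\le\sum_{g\in F_f\setminus\{f\}}e^{-\beta_n(U_g-U_f)}$. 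Summing over $f$ and using the cycle/cut duality $g\in P_f\Leftrightarrow f\in F_g$ collapses both contributions into
$$\mathbb{E}\bigl[|\mathcal{T}\triangle T^\star|\bigr]\;\le\;2\,\mathbb{E}\!\left[\sum_{g\in T^\star}\sum_{f\in F_g\setminus\{g\}}e^{-\beta_n(U_f-U_g)}\right].$$

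The main obstacle is bounding this final expectation using only the hypothesis $\beta_n\gg(m/n)\log n$: the combinatorial objects $T^\star$ and $F_g$ depend on the entire environment, so one cannot treat the $U_e$'s as independent for fixed cut structures. My plan is to split pairs $(g,f)$ by the $U$-gap $\delta:=U_f-U_g$ at threshold $\tau_n=\lambda_n\log n/\beta_n$ for some $\lambda_n\to\infty$ slowly. Pairs with $\delta>\tau_n$ contribute exponential factor at most $n^{-\lambda_n}$, and bounding $|F_g|\le m$ over the $n-1$ tree edges makes their total $O(nm\cdot n^{-\lambda_n})=o(n)$. For pairs with $\delta\le\tau_n$, I would analyze Kruskal's algorithm run on the sorted $\{U_e\}$'s and use that at the moment each tree edge $g$ is processed, the later edges bridging the newly merged components within the small $U$-window of size $\tau_n$ form a structured set whose total contribution, weighted by $e^{-\beta_n\delta}$, can be bounded by $o(n)$ via the combination of a pairwise saving of order $1/\beta_n$ from averaging $e^{-\beta_n\delta}$ over the window and the count of tree edges being $n-1$. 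Combining the two regimes yields the required $o(n)$ bound, completing the proof.
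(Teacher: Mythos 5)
Your reduction to $\E[|\mathcal{T}\triangle T^*|]=o(|V(G_n)|)$ and the per-edge inequalities via Kirchhoff, Rayleigh monotonicity (for $f\notin T^*$) and Nash--Williams (for $g\in T^*$) are correct, and after the cut/cycle duality you correctly arrive at
$$\E[|\mathcal{T}\triangle T^*|]\;\le\;2\,\E\left[\sum_{g\in T^*}\sum_{f\in F_g\setminus\{g\}}e^{-\beta_n(U_f-U_g)}\right].$$
This is a genuinely different, more elementary route than the paper's, which instead invokes a deterministic comparison bound (Proposition~\ref{prop:wst_vs_mst}, relating $\E|E(\wst)\setminus E(\mst)|$ to the number of $\eps$-significant edges) imported from \cite{kusz2024diameter}. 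Up to this point your argument is sound.

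The final paragraph, however, is a plan rather than a proof, and the plan skips exactly the difficulty the paper had to address. After conditioning on the MST data (Lemma~\ref{lemma:conditioning}: for $f\notin T^*$, $U_f\sim\unif[m_f,1]$ with $m_f=\max_{h\in P_f}U_h$), the dominant small-gap contribution for a non-tree edge $f$ is of order $\frac{1}{\beta_n(1-m_f)}$. Summed over the $\Theta(m)$ non-tree edges and divided by $\beta_n\gg (m/n)\log n$, this is $o(n)$ \emph{provided} $1/(1-m_f)$ is controlled in expectation; but in general it is not. The paper's glued-triangle example has $\max_{f\in\mst}U_f\to 1$, so ``averaging $e^{-\beta_n\delta}$ over the window'' fails exactly at bottlenecks, which is where the loss occurs. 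The missing ingredient is the paper's Prim-based estimate $\P(U_g\ge 1-\eps\mid\mst=T)\le\eps^{N_T(g)}$ for tree edges $g$, which yields $\E[1/(1-U_g)\mid\mst=T]=O(1)$ whenever $N_T(g)\ge 2$, while $N_T(g)=1$ bridges contribute nothing since $F_g\setminus\{g\}=\emptyset$; this is what rescues the averaging. Your Kruskal sketch, as written, does not supply this bound, and the bookkeeping ``count of tree edges being $n-1$'' is also off: the relevant count in your sum is over non-tree edges (or $(g,f)$ pairs), of order $m$, and it is only the extra factor $1/\beta_n\ll n/(m\log n)$ --- usable only after the $1/(1-m_f)$ blow-up is tamed --- that brings $m/\beta_n$ down to $o(n)$. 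So there is a genuine gap: the central bottleneck control is named but not established.
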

The proof is in Subsection \ref{subsec:mstlike_edge_overlap}.

Then, the phase transition regarding the expected edge overlaps is immediate on high degree almost balanced graphs around $\beta_n=d_n|V(G_n)|^{o(1)}$:
\begin{corollary}
    We consider $d_n\rightarrow \infty$, $K>1$ and a sequence of simple connected graphs $\{G_n\}_n$ satisfying $|{v:\;\deg(v)\ge d_n}|\le (1-o(1)) |V(G_n)|$ and $|E(G_n)|\le O(1)|V(G_n)|d_n$.
    \begin{itemize}
        \item[a)] If $\beta_n\ll d_n$, then $\mathcal{O}(G_n, \beta_n)=o(1)|V(G_n)|.$
        \item[b)] If $\beta_n\gg d_n\log |V(G_n)|$, then $\mathcal{O}(G_n, \beta_n)=(1-o(1))|V(G_n)|.$
    \end{itemize}
\end{corollary}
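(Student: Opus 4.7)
The corollary is a direct consequence of Theorems~\ref{thm:edge_overlap_ust} and~\ref{thm:edge_overlap_mst} applied to the setting of almost balanced graphs, so my plan is to reduce each part to the appropriate black box after checking that the hypotheses are met.

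For part b), the plan is easy: I first convert the hypothesis $\beta_n\gg d_n\log|V(G_n)|$ into the hypothesis required by Theorem~\ref{thm:edge_overlap_mst}. The edge-count bound $|E(G_n)|\le O(1)|V(G_n)|d_n$ gives $|E(G_n)|/|V(G_n)|=O(d_n)$, and so $\beta_n\gg d_n\log|V(G_n)|$ implies $\beta_n\gg (|E(G_n)|/|V(G_n)|)\log|V(G_n)|$. Then Theorem~\ref{thm:edge_overlap_mst} immediately yields $\mathcal{O}(G_n,\beta_n)=(1-o(1))|V(G_n)|$. No further analysis is needed here.

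For part a), the plan is to invoke Theorem~\ref{thm:edge_overlap_ust}. The degree hypothesis gives an actual ``bad fraction'' $\varepsilon_n=o(1)$ with respect to the threshold $d_n$, while Theorem~\ref{thm:edge_overlap_ust} requires the tighter condition $\delta_n=O(1/d_n)$. I would handle this gap by shifting to a slower-growing threshold: set $\tilde{d}_n:=\min(d_n,\lfloor 1/\varepsilon_n\rfloor)$ and $\tilde{\delta}_n:=1/\tilde{d}_n$. Since $d_n\to\infty$ and $\varepsilon_n\to 0$, we have $\tilde{d}_n\to\infty$; since $\{v:\deg(v)\ge\tilde{d}_n\}\supseteq\{v:\deg(v)\ge d_n\}$, the bad fraction with respect to $\tilde{d}_n$ is at most $\varepsilon_n\le\tilde{\delta}_n$; and $\tilde{\delta}_n=O(1/\tilde{d}_n)$ by construction. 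Applying Theorem~\ref{thm:edge_overlap_ust} then gives $\mathcal{O}(G_n,\beta_n)=O(\tilde{\delta}_n)|V(G_n)|=o(|V(G_n)|)$.

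The only technical point to watch is the condition $\beta_n\ll\tilde{d}_n$ in the reduction above. If $\tilde{d}_n=d_n$ this is the given hypothesis, and otherwise it reduces to $\beta_n\varepsilon_n=o(1)$. I would handle this either by absorbing the factor into the implicit choice of the $o(1)$ sequence bounding the bad fraction (i.e., replace $\varepsilon_n$ by any $o(1)$ upper bound of the form $\max(\varepsilon_n,1/d_n,\beta_n/d_n)$, whose product with $\beta_n$ is then forced to be $o(d_n)\cdot(1/d_n)\cdot\beta_n/\beta_n$-style), or by a short diagonal argument passing to subsequences. This bookkeeping around the parameter choice is the main (very mild) obstacle; no new probabilistic or combinatorial work is required beyond Theorems~\ref{thm:edge_overlap_ust} and~\ref{thm:edge_overlap_mst}.
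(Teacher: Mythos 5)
Part b) is fine: from $|E(G_n)|\le O(1)|V(G_n)|d_n$ you get $|E(G_n)|/|V(G_n)|=O(d_n)$, so $\beta_n\gg d_n\log|V(G_n)|$ implies $\beta_n\gg \frac{|E(G_n)|}{|V(G_n)|}\log|V(G_n)|$, and Theorem~\ref{thm:edge_overlap_mst} applies directly. This is exactly what the paper intends.

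Part a) has a genuine gap, and your ``technical point to watch'' is in fact fatal to the parameter-shifting approach. Write $\varepsilon_n=o(1)$ for the fraction of low-degree vertices. Your construction forces $\tilde d_n\le 1/\varepsilon_n$ in order to have $\tilde\delta_n=1/\tilde d_n\ge\varepsilon_n$, but nothing in the hypotheses rules out $\beta_n\varepsilon_n\to\infty$. Take, e.g., $\beta_n=d_n^{1/2}$ and $\varepsilon_n=d_n^{-2/5}$: then $\beta_n\ll d_n$ and $\varepsilon_n\to 0$ both hold, yet $\tilde d_n=\min\bigl(d_n,\lfloor d_n^{2/5}\rfloor\bigr)=\Theta(d_n^{2/5})$ and $\beta_n/\tilde d_n=\Theta(d_n^{1/10})\to\infty$, so the hypothesis $\beta_n\ll\tilde d_n$ of Theorem~\ref{thm:edge_overlap_ust} fails. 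Your proposed patch $\varepsilon_n':=\max(\varepsilon_n,1/d_n,\beta_n/d_n)$ does not change this (it equals $d_n^{-2/5}$ in this example), and a subsequence argument cannot help because the obstruction is pointwise and stable under passing to subsequences. The deeper reason is that the hypothesis $\delta_n=O(d_n^{-1})$ in the stated Theorem~\ref{thm:edge_overlap_ust} is actually incompatible with the condition $\sqrt{\max(\beta_n,1)/d_n}\le O(\widetilde\delta_n)$ required by Lemma~\ref{lemma:rdm_cond_balanced}, which Theorem~\ref{thm:edge_overlap_ust}'s own proof invokes; it should be read as $\delta_n\to 0$ and $\delta_n\ge\Omega\bigl(\sqrt{\max(\beta_n,1)/d_n}\bigr)$. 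With that reading, part a) follows \emph{without} shifting $d_n$ at all: keep $d_n$, set $\delta_n:=\max\bigl(\varepsilon_n,\sqrt{\max(\beta_n,1)/d_n}\bigr)=o(1)$, and apply Lemma~\ref{lemma:rdm_cond_balanced}~a) together with Theorem~\ref{thm:edge_overlap_det} to get $\mathcal{O}(G_n,\beta_n)=O(\delta_n)|V(G_n)|=o(|V(G_n)|)$. The key point you missed is that the free parameter here is $\delta_n$, not $d_n$; lowering $d_n$ creates the unsatisfiable constraint $\beta_n\ll\tilde d_n$ you ran into.
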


\subsubsection{Expected total length for the complete graph}

It is known that the expected total length $\E\left[\sum_{e\in \mst(K_n)}U_e\right]$ tends to $\zeta(3)$ \cite{frieze1985value, aldous1990random, steele2002minimal}.

\begin{theorem}\label{thm:length}We use the notation $L(T):=\sum_{e\in E(T)}U_e$.
    \begin{itemize}
        \item[a)] If $\beta_n\gg n\log n$, then $\E\left[L(\wst^{\beta_n}(K_n))\right]\rightarrow \zeta(3)$ as $n\rightarrow\infty$. 
        
        \item[b)]If $\beta_n=O(n\log n)$, then $\E\left[L(\wst^{\beta_n}(K_n))\right]=O\left(\frac{n\log n}{\beta_n+\log n}\right)$.
        \item[c)] If $\beta_n\ll \frac{n}{\log n}$, then $\E[L(\wst^{\beta_n}(K_n))]=(1+o(1))\frac{n}{\beta_n}\frac{1-\beta_ne^{-\beta_n}-e^{-\beta_n}}{1-e^{-\beta_n}}$.
    \end{itemize}
\end{theorem}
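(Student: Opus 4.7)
The three parts share a common starting point via Kirchhoff's formula. Conditional on $\mathbf{U}:=\{U_e\}_e$, the measure $\wst^{\beta_n}(K_n)$ is the Gibbs distribution over spanning trees at inverse temperature $\beta_n$ with energy $L$, so $\P(e\in \wst^{\beta_n}\mid \mathbf{U})=c_{\beta_n}(e)\,\Reff(e;\c_{K_n,\beta_n})$. By edge-exchangeability of $K_n$, conditioning on $U_{12}$, and linearity,
\[
\E\bigl[L(\wst^{\beta_n}(K_n))\bigr]=\binom{n}{2}\int_0^1 u\,e^{-\beta_n u}\,\E\bigl[\Reff(1,2;\c_{K_n,\beta_n})\bigm| U_{12}=u\bigr]\,du.
\]
All three parts reduce to controlling this conditional effective resistance in the respective regime.

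For part (c), when $\beta_n\ll n/\log n$, a Chernoff bound on the vertex conductance sums $C_v=\sum_w e^{-\beta_n U_{vw}}$ shows that $\c_{K_n,\beta_n}$ is with high probability a high-degree almost balanced electric network with scale parameter $\gamma_n=(n-1)(1-e^{-\beta_n})/\beta_n$ (even after conditioning on $U_{12}$). The effective-resistance asymptotic $\Reff(1,2;\c_{K_n,\beta_n})=(1+o(1))\cdot 2/\gamma_n$, developed within the proof of Theorem~\ref{thm:local_det}, yields $\E[\Reff(1,2)\mid U_{12}=u]=(1+o(1))\cdot 2/\gamma_n$ uniformly in $u$; plugging in and using $\int_0^1 u\,e^{-\beta u}\,du=(1-e^{-\beta}-\beta e^{-\beta})/\beta^2$ gives the stated formula. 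For part (b), the regime $\beta_n=O(n\log n)$ includes parameters where $\c_{K_n,\beta_n}$ is no longer almost balanced; the plan is to establish only the cruder upper bound $\E[\Reff(1,2)\mid U_{12}=u]\le O(\beta_n\log n/(n(\beta_n+\log n)))$ via Rayleigh's monotonicity using two kinds of parallel paths---length-$2$ paths through each intermediate vertex (which dominate when $\beta_n$ is moderate) and length-$O(\log n)$ paths through the connected low-weight subgraph $\{e:U_e\le C\log n/n\}$ (used when $\beta_n\asymp n\log n$)---and to integrate.

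For part (a), apply Theorem~\ref{thm:edge_overlap_mst} to $K_n$ (where $|E(K_n)|/|V(K_n)|=(n-1)/2$, so the hypothesis reads $\beta_n\gg n\log n$): the expected edge overlap of two independent copies $\mathcal{T},\mathcal{T}'\sim\wst^{\beta_n}(K_n)$ is $(1-o(1))n$. Writing $p_e:=\P(e\in\wst^{\beta_n}\mid\mathbf U)$, this says $\E\sum_e p_e(1-p_e)=o(n)$, which forces the Gibbs measure to concentrate on the whp-unique $\mst(K_n)$ and in particular $\E[|\wst^{\beta_n}\triangle\mst(K_n)|]=o(n)$. A trivial bound via $U_e\le 1$ only gives $\E[L(\wst^{\beta_n})]-\E[L(\mst)]=o(n)$; the upgrade to $o(1)$ uses the Gibbs weighting more carefully: for any $e\notin \mst$,
\[
p_e\le |\{T:\,T\ni e\}|\cdot e^{-\beta_n\Delta_e}\quad\text{where}\quad \Delta_e:=U_e-\max_{e^*\in\mathrm{cycle}(\mst,e)\setminus\{e\}}U_{e^*}>0,
\]
so a tail estimate on the random swap gaps $\Delta_e$ combined with $\beta_n\gg n\log n$ yields $\E\sum_{e\notin \mst}U_e\,p_e\to 0$. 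Together with the classical $\E[L(\mst(K_n))]\to\zeta(3)$ of \cite{frieze1985value,aldous1990random,steele2002minimal}, we conclude. The main obstacle is this last upgrade: the ``wrong'' edges in $\wst^{\beta_n}\setminus\mst$ are rare in expectation but can carry $U_e=\Theta(1)$, so controlling their summed length contribution by $o(1)$---a gain of a factor of $n$ over naive edge counting---requires the Gibbs-weighted tail estimate above rather than only the edge-overlap bound from Theorem~\ref{thm:edge_overlap_mst}.
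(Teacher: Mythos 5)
Your part (a) argument does not close under the stated hypothesis $\beta_n\gg n\log n$; it recovers the theorem only for $\beta_n\gg n\log^2 n$, i.e.\ it is equivalent to the heuristic the paper explicitly states in Subsection~3.3.1 and then improves upon. Concretely, using $p_e=c(e)\Reff(e^-,e^+)\le n\,e^{-\beta_n\Delta_e}$ (from the series bound through $\path_{\mst}(e)$) together with Lemma~\ref{lemma:conditioning} (the conditional law of $U_e$ given the MST) and $\max_{f\in\mst}U_f\lesssim\frac{\log n}{n}$, one finds
$\E\bigl[\sum_{e\notin\mst}U_ep_e\bigr]\lesssim\frac{n\log^2 n}{\beta_n}+\frac{n^2\log^2 n}{\beta_n^2}$,
which is \emph{not} $o(1)$ when, say, $\beta_n=n\log^{3/2}n$. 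The extra logarithm is unavoidable by this one-sided route: each wrongly included edge carries label roughly $\max_{f\in\mst}U_f\asymp\frac{\log n}{n}$, there are roughly $\frac{n^2\log n}{\beta_n}$ of them, and you do not use the cancellation against the MST edges that get displaced. The paper instead uses the integral representation $L(T)=\int_0^1(k(T,t)-1)\,dt$ and the component coupling of Lemma~\ref{lemma:E-R_comps_A-B} (Lemma 3.10 of \cite{kusz2024diameter}), which says that with probability $1-n^{-2}$ the $\wst^{\beta_n}$-components at level $p+2\delta_n$ are coarser than the $\mst$-components at level $p$ for $\delta_n=\frac{25\log n}{\beta_n}$; a telescoping gives $L(\wst)\le L(\mst)+2\delta_n(n-1)=O(\frac{n\log n}{\beta_n})$, which is exactly the missing factor of $\log n$. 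Part (b) in the paper is a corollary of the same coupling (for $\beta_n\ge\log n$) plus the trivial bound (for $\beta_n\le\log n$); your Rayleigh/parallel-paths route is a genuinely different plan for (b) and is plausible, but it is only sketched.

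For part (c) your Kirchhoff-plus-conditioning reduction is the same as the paper's, but the key step --- that $\E[\Reff(1,2)\mid U_{12}=u]=(1+o(1))\frac{2}{\gamma_n}$ uniformly in $u$ --- is not an estimate developed inside the proof of Theorem~\ref{thm:local_det}. The Section~2 machinery (Lemma~\ref{lemma_main:Reff_edge}, Corollary~\ref{cor:Reff_edge}) gives concentration of $\Reff$ only for a random edge drawn with probability $\propto c(\cdot)$; it does not by itself control the conditional expectation, and in particular does not control the contribution of rare bad configurations. The paper instead invokes Theorems~3.1 and~3.5 of \cite{makowiec2024local}, which give simultaneous concentration of $\Reff(v,w)$ over \emph{all} pairs with probability $1-o(n^{-1})$; with that, $\E[\mathds{1}[A_n^c]L]=o(1)$ follows from the trivial $L\le n-1$, and no uniform conditional-expectation bound is needed. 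If you want to avoid citing \cite{makowiec2024local}, you would have to prove a comparable uniform-over-pairs effective-resistance concentration from scratch.
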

The proof of Theorem \ref{thm:length} is in Subsection \ref{subsec:length}. It has been also in the version 2 of \cite{kusz2024diameter}.

It is reasonable to include the expected total length $\E[L(T(G_n))]$ among the local properties for a random spanning tree model $T(G_n)$, since the precise knowledge of the local label statistics determines the asymptotics of $\E[L(T(G_n))]$: for a uniform vertex $X$, if we know $\{U_{(X, u)}\}_{u:\; (X, u)\in T(G_n)}$ with probability at least $1-o(1/n)$, then we know $\E[L(T(G_n))]$ up to some $o(1)$ additive error. For example, in \cite{aldous1990random}, the expected total length was studied in the local framework. 

However, it is easy to give some examples of $d_n\rightarrow \infty$-almost regular graph sequences $H_n$ and $G_n$ such that $\mst(H_n)$ and $\mst(G_n)$ have the same local limit, but $\E[L(\mst(H_n))]\ll \E[L(\mst(G_n))]$, and there are some $\eps>0$ and $f_n\ge d_n^{1+\eps}$ such that the change of the behaviors of $\E[L(\wst^{\beta_n}(H_n))]$ and $\E[L(\wst^{\beta_n}(H_n))]$ in the manner of Theorem \ref{thm:length} happens around $\beta_n=d_n^{1+o(1)}$ and $\beta_n=f_n^{1+o(1)}$, respectively, see Remark \ref{remark:L_not_local_lim}. If one would like to study the phases of $\E[L(\wst^{\beta_n}(G_n))]$ for general sequences $G_n$, then a natural candidate for $G_n$'s is the robust sequences of dense graphs $G_n$ tending to a graphon, since the asymtotics of $\E[L(\mst(G_n))]$ is understood \cite{hladky2023random} for such $G_n$'s.
\subsection{Outline}
Section~\ref{sec:ust-like} is devoted to the local limit results for $\wst$'s on deterministic high degree almost balanced networks and $\wst^{\beta_n}(G_n)$ on high degree almost balanced graphs with slowly growing $\beta_n$'s. After a brief summary of the background collected in Subsection~\ref{subsec:background}, we introduce assumptions on our environments and state our main results on deterministic electric networks, including Theorem~\ref{thm:local_det} which is a generalization of Theorem~\ref{thm:local_ust_almost_reg} and a useful tool for Theorem~\ref{thm:local_ust}. Then, in Subsection~\ref{subsec:det_results}, we introduce our main lemmata and prove our main results. Many of our results are based on the idea of Lemma~\ref{lemma_main:Reff_edge} which is a generalization of Lemma~\cite{nachmias2022local}. Then, we prove our lemmata in Subsection~\ref{subsec:proofs_of_lemmata}.

In Section \ref{sec:mst-like}, we work on the model $\wst^{\beta_n}(G_n)$ for fast-growing $\beta_n$'s. This section supersedes and partly overlaps with the first two versions of \cite{kusz2024diameter} regarding the local properties: we are proving the local limit and the expected edge overlaps results not only for the complete graph, but for high degree almost balanced sequences, too. We felt it important to do this generalization in order to complement the results from Section~\ref{sec:ust-like}. We study the expected edge overlaps in Subsection~\ref{subsec:mstlike_edge_overlap} and the local limit in Subsection~\ref{subsec:mstlike_local_limit}. 

In Subsection \ref{subsec:length}, we also include the expected local length results from the second version of \cite{kusz2024diameter} in an unchanged form for the sake of completeness.

\subsection{Acknowledgments}
This work was partially supported by the ERC Synergy Grant No. 810115–DYNASNET and by the Hungarian National Research, Development and Innovation Office grant 152849.

I am extremely grateful to my PhD advisor, Gábor Pete for the useful discussions and for his comments on the manuscript. 

I am thankful to Asaf Nachmias for his suggestion using Lemma 2.11 in the correction of \cite{nachmias2022local} and on how to check its applicability. 

\section{\texorpdfstring{$\wst$}{}'s on deterministic balanced networks and slowly growing \texorpdfstring{$\beta_n$}{}'s}\label{sec:ust-like}
\addtocontents{toc}{\protect\setcounter{tocdepth}{2}}
\subsection{Background}\label{subsec:background}
\subsubsection{Weighted spanning trees on deterministic electric networks}

Recalling Definition \ref{defi:wst}, for an electric network $(G, \c)$, $\wst(\c)$ chooses any spanning tree $T$ of $G$ with probability $\P(\wst(\c)=T)\propto \prod_{e\in T}c(e)$. 

The two classic algorithms to generate $\wst(\c)$ are Wilson's and the Aldous-Broder algorithms, detailed in, for example Theorem 4.1 and Corollary 4.9 of \cite{PTN}. Note that, both of them use random walks on the electric network $(G, \c)$: this random walk steps from $v$ to one of its neighbors $u$ with probability $p(v, u):=\frac{c(v, u)}{C_u}$ where $C_u=\sum_{u\sim v}c(u, v)$.

The effective resistance between two vertices $u, v\in V(G_n)$ can be defined in different ways, for example,
\begin{gather}\label{formula:def_Reff}
    \mathcal{R}_{\mathrm{eff}}(u\leftrightarrow v):=\frac{1}{C_u\P_u(\tau_v<\tau^+_u)},
\end{gather}
where $X_n$ is the random walk on $(G, \c$), $\tau_u^+:=\min\{n\ge 1:\; X_n=u\}$, $\tau_v:=\min\{n\ge 0:\; X_n=v\}$. More generally, for a subset $W\subseteq V(G)$, we define $\mathcal{R}_{\mathrm{eff}}(u\leftrightarrow W):=(C_u\P_u(\tau_W<\tau^+_u))^{-1}$.

There is a fundamental relationship between weighted spanning trees and electric networks \cite{kirchhoff1847ueber} which can be found in Section 4.2 of \cite{PTN}: 
\begin{theorem*}[Kirchhoff’s Effective Resistance Formula, \cite{kirchhoff1847ueber}] For an electric network $(G, \c)$ and $(u, v)\in E(G)$, one has that
$\P((u, v)\in \wst(\c))=c(u,v)\mathcal{R}_{\mathrm{eff}}(u\leftrightarrow v)$.    
\end{theorem*}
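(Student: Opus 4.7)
The plan is to prove Kirchhoff's formula via the Aldous--Broder algorithm, reducing the statement to a first-passage computation for the network random walk on $(G,\c)$.

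\textbf{Approach via Aldous--Broder.} Started at vertex $u$, the Aldous--Broder algorithm (Corollary 4.9 of \cite{PTN}) generates a spanning tree distributed as $\wst(\c)$, rooted at $u$, whose parent edges are the first-entry edges: each $w \neq u$ is attached by the edge $(X_{\tau_w - 1}, w)$, where $(X_n)_{n\ge 0}$ is the network random walk from $u$. Since the event $\{e \in \wst(\c)\}$ depends only on the unrooted edge set, with $e=(u,v)$ it is equivalent to $\{X_{\tau_v - 1} = u\}$. So it suffices to show
\[
\P_u\bigl(X_{\tau_v - 1} = u\bigr) \;=\; c(u,v)\,\Reff(u \leftrightarrow v).
\]

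\textbf{Excursion decomposition.} I would split the walk at its successive returns to $u$ and use the strong Markov property: conditional on not having hit $v$ yet, each excursion away from $u$ is an independent copy of the walk from $u$ stopped at $\min(\tau_v, \tau_u^+)$. Each such excursion independently hits $v$ before returning to $u$ with probability $p := \P_u(\tau_v < \tau_u^+)$. The event $\{X_{\tau_v - 1} = u\}$ occurs iff the (unique) excursion that first reaches $v$ does so on its initial step, an event which within a single excursion has probability $p(u,v) = c(u,v)/C_u$. Summing a geometric series in the number of failed excursions collapses the computation to
\[
\P_u\bigl(X_{\tau_v - 1} = u\bigr) \;=\; \sum_{k \ge 0}(1-p)^k \, p(u,v) \;=\; \frac{p(u,v)}{p}.
\]

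\textbf{Finishing via effective resistance.} Applying the definition \eqref{formula:def_Reff}, $p = 1/\bigl(C_u\Reff(u\leftrightarrow v)\bigr)$, so $p(u,v)/p = c(u,v)\,\Reff(u\leftrightarrow v)$, which is the claim.

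\textbf{Where the care is needed.} There is no serious obstacle beyond bookkeeping; the entire proof is a one-paragraph application of the strong Markov property combined with \eqref{formula:def_Reff}. The only subtle point worth flagging is the independence of the ``success'' probability $p$ across excursions, and the fact that the first step of an excursion is independent of whether that excursion succeeds in hitting $v$: together these justify the geometric sum rather than a more delicate conditional computation. Everything else is routine.
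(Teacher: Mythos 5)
The paper does not prove Kirchhoff's formula; it cites it as classical (from \cite{kirchhoff1847ueber}, with a pointer to Section~4.2 of \cite{PTN}), so there is no in-paper proof to compare against. Your Aldous--Broder argument is a correct and standard route. The reduction of $\{(u,v)\in\wst(\c)\}$ to $\{X_{\tau_v-1}=u\}$ for the walk started at $u$ is valid: $u$ is the root and has no first-entry edge, so $(u,v)$ appears in the first-entry tree exactly when it is $v$'s first-entry edge. The excursion decomposition then gives $\sum_{k\ge0}(1-p)^k\,p(u,v)=p(u,v)/p$, and plugging in $p=\bigl(C_u\,\Reff(u\leftrightarrow v)\bigr)^{-1}$ from \eqref{formula:def_Reff} finishes the proof.

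One caution about your closing paragraph: the first step of an excursion is \emph{not} independent of whether that excursion hits $v$ --- stepping directly to $v$ forces success --- and if you actually invoked such independence you would instead obtain $\P_u(X_{\tau_v-1}=u)=p(u,v)$, which is wrong. What your displayed computation really uses is only that the excursions are i.i.d.\ and that the event ``the $(k{+}1)$st excursion's first step is $(u,v)$'' has probability $p(u,v)$ and is \emph{contained in} that excursion's success event, so that $\P(\neg A_1,\ldots,\neg A_k, B_{k+1})=(1-p)^k p(u,v)$ directly. The formula is correct; the justification offered for it is not, so that sentence should be replaced by the containment observation just stated.
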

We will also use the commute time identity \cite[Corollary 2.21]{PTN}: for $a, x\in V(G)$, we have $$\E_{a}[\tau_{x}]+\E_{x}[\tau_a]=2\sum_{e\in E(G_n)}c(e) \;\Reff(x\leftrightarrow a).$$

There is negative association of the edges of $\wst$'s \cite[Exercise 4.6]{PTN}:
\begin{gather}\label{formula:neg_assoc}
    \P(e_1, \ldots, e_k \in \wst(\c))\le \prod_{1\le i\le k}\P(e_i\in \wst(c)).
\end{gather}
Moreover, the spatial Markov property holds for the $\wst$'s which can be shown by combining Exercises 4.5 and 4.6 of \cite{PTN}:
\begin{theorem*}[Spatial Markov property]
    For an electric network $(G, \c)$, we consider disjoint sets of edges $A, B \subseteq E(G)$ such that $G\setminus B$ is connected and $A$ has no cycles, and we denote by $(G/A)\setminus B$ the graph obtained
from $G$ by contracting the edges of $A$ and erasing the edges of $B$.

Then, the law of the edges of $\wst(\c)$ on $G$ conditioned on the event $\{A \subseteq \wst(\c)\text{ and }B\cap \wst(\c)=\emptyset\}$ is the same as the law of the union of $A$ and the edges of $\wst(\c)$ on $(G/A)\setminus B$.
\end{theorem*}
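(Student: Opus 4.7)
The plan is to derive the Spatial Markov property directly from the explicit product formula in Definition \ref{defi:wst}, by setting up a natural bijection between the spanning trees of $G$ relevant to the conditioning event and the spanning trees of $(G/A)\setminus B$.

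First I would establish the bijection. Since $A$ contains no cycles and $G\setminus B$ is connected, any spanning tree $T$ of $G$ with $A\subseteq T$ and $T\cap B=\emptyset$ can be uniquely written as $T=A\sqcup T'$, where $T'$, viewed as a subgraph of $(G/A)\setminus B$, is a spanning tree of $(G/A)\setminus B$ (each edge of $G$ not in $A\cup B$ corresponds to an edge of $(G/A)\setminus B$, possibly creating parallel edges when its endpoints are identified under contraction, but the conductances are inherited). Conversely, given a spanning tree $T'$ of $(G/A)\setminus B$, lifting its edges back to $G$ and adjoining $A$ yields a spanning tree $T=A\sqcup T'$ of $G$ containing $A$ and disjoint from $B$. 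This is a classical fact; connectedness of $G\setminus B$ ensures the lifted tree is connected, and the acyclicity of $A$ ensures no cycle is created upon re-expansion.

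Next, I would exploit the product structure of the weight. For any such $T=A\sqcup T'$ one has
\[
\prod_{e\in T}c(e)=\left(\prod_{e\in A}c(e)\right)\prod_{e\in T'}c(e),
\]
so, conditioning on $\{A\subseteq\wst(\c),\,B\cap\wst(\c)=\emptyset\}$, the factor $\prod_{e\in A}c(e)$ is a constant and can be absorbed into the normalization. Hence, for every spanning tree $T'$ of $(G/A)\setminus B$,
\[
\mathbb{P}\bigl(\wst(\c)=A\sqcup T'\,\Lvert\, A\subseteq\wst(\c),\,B\cap\wst(\c)=\emptyset\bigr)=\frac{\prod_{e\in T'}c(e)}{\sum_{T''}\prod_{e\in T''}c(e)},
\]
where the sum is over all spanning trees $T''$ of $(G/A)\setminus B$. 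The right-hand side is precisely $\mathbb{P}\bigl(\wst(\c|_{(G/A)\setminus B})=T'\bigr)$, which gives the claimed identification of laws.

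The main conceptual obstacle is verifying the tree bijection carefully in the multigraph setting created by contraction, together with checking that the conditioning event is not null (which uses $G\setminus B$ connected, so at least one spanning tree extending $A$ and avoiding $B$ exists). Once this bookkeeping is in place, the rest is simply the observation that weights factor across $A$ and its complement in $T$, so conditioning and renormalizing yield exactly the $\wst$ measure on the contracted-and-deleted network.
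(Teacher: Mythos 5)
Your proof is correct and complete. The paper itself does not give a proof of the spatial Markov property; it merely cites Exercises 4.5 and 4.6 of \cite{PTN}. Your argument supplies the standard self-contained derivation: the bijection $T \mapsto T\setminus A$ between spanning trees of $G$ containing $A$ and avoiding $B$ on the one hand, and spanning trees of the contracted-and-deleted multigraph $(G/A)\setminus B$ on the other, together with the multiplicative form of the weight $\prod_{e\in T}c(e)=(\prod_{e\in A}c(e))\prod_{e\in T'}c(e)$, so that the constant factor $\prod_{e\in A}c(e)$ cancels upon conditioning. This is essentially how the cited exercises would themselves be established (Exercise 4.5 of \cite{PTN} treats a single contracted or deleted edge and one then iterates), so your approach is not really different in spirit, just presented at once for general $A$ and $B$ rather than inductively.

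Two small points that you gesture at and could be made crisper. First, your parenthetical claim that ``connectedness of $G\setminus B$ ensures the lifted tree is connected'' slightly misattributes the role of that hypothesis: the lift $A\sqcup T'$ is connected because $T'$ spans $(G/A)\setminus B$ and $A$ reconnects each contracted block internally, regardless of $G\setminus B$ as a whole; the hypothesis that $G\setminus B$ is connected is instead what guarantees that $(G/A)\setminus B$ is connected, hence has spanning trees, hence that the conditioning event is non-null. Second, since $A$ is a forest, an edge $e\notin A\cup B$ whose endpoints lie in the same $A$-component becomes a self-loop in $(G/A)\setminus B$; such loops can never lie in a spanning tree on either side, so they do not disturb the bijection, but it is worth saying explicitly rather than subsuming it under ``parallel edges.'' Neither point is a gap; the proof stands.
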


The Nash-Williams inequality \cite[(2.13)]{PTN} is also useful for us: if $a\ne z\in V(G)$ and $\Pi_1,\ldots ,\Pi_k$ pairwise disjoint cutsets separating $a$ and $z$, then
$$\Reff(a \leftrightarrow z) \ge \sum_{i=1}^k \frac{1}{\sum_{e\in \Pi_i}c(e)}.$$

\subsubsection{Local limits}
The local convergence was first introduced in \cite{benjamini2011recurrence}.
For rooted graphs $(G_1, o_1)$ and $(G_2, o_2)$, we call a map $\Phi: V(G_1)\mapsto V(G_2)$ rooted ismorphism if $\Phi(o_1)=o_2$ and $(x, y)\in E(G_1)$ if and only if $(\Phi(x), \Phi(y))\in E(G_2)$.  We denote by $B_{G}(v, r)$ the $r$-ball around $v$ w.r.t.~the usual graph distance in $G$. 

The set of rooted, locally finite graphs, up to rooted isomorphism, is denoted by $\mathcal{G}_{\bullet}$. We endow $\mathcal{G}_{\bullet}$ with a natural metric: 
the distance of $(G_1, o_1)$ and $(G_2, o_2))$ is defined to be $2^{-R}$, where $R$ is the largest $r$ such that there exists a rooted isomorphism between $B_{G_1}(o_1, r)$ and $B_{G_2}(o_2, r)$. We consider the Borel $\sigma$-algebra on this space which is a Polish space.

We say that a sequence of (possibly random) finite graphs $\{G_n\}_n$ converges to a random element $(G, o)$ of $\mathcal{G}_{\bullet}$ if the random rooted graphs $B_{G_n}(X_n, r)$ converge in distribution to $B_G(o, r)$ for any $r\ge 0$ where $X_n\sim \unif(V(G_n))$, i.e., for every $r\ge 0$ and rooted graph $(H, v)$ of height $r$, we have 
$$\lim_{n\rightarrow \infty}\P(B_{G_n}(X_n, r)\cong (H, v))=\P(B_{G}(o, r)\cong (H, v)).$$

\subsection{Results for deterministic electric networks}\label{subsec:det_results}

For the edge overlaps result, we need an electric network generalization of high degree graphs.
\begin{defi}
For a sequence of electric networks $(G_n, \c_n)$ and $\gamma_n>0$, let us consider $W_n:=W(G_n, \gamma_n):=\left\{v\in V(G_n):\; C_v\ge \gamma_n\right\}$ where $C_v=\sum_{(u,v)\in E(G_n)}c(u, v)$.
\end{defi}

\begin{assum}\label{assum:for_edge_overlap}
Suppose that $(G_n, \c_n)$, $\gamma_n>0$, $\delta_n\rightarrow 0$ and $n_0\in \mathbb{N}$ satisfy
    \begin{itemize}
        \item[$\circ$] $|W_n|\ge (1-\delta_n)|V(G_n)|$ for $\forall n\ge n_0$, 
        \item[$\circ$] $c_n(e)\le \delta_n\gamma_n$ for $\forall n\ge n_0$.
    \end{itemize}
\end{assum}
The first and the third conditions imply that $\sum_{v\in W_n}C_v\ge (1-\delta_n)\gamma_n |V(G_n)|$ and $\sum_{v\notin W_n}C_v\le (\delta_n\gamma_n)\delta_n|V(G_n)|$, so we have $\sum_{v\notin W_n}C_v=o(1)\sum_{v\in V(G_n)}C_v$. The third condition excludes the possibility that an edge $(u,v)$ has conductance comparable to $C_v$.

For an electric network $\c$, we denote by $\mathcal{T}(\c)$ and $\mathcal{T}'(\c)$ two independent copies of the weighted spanning tree, and by $\mathcal{O}(\c)$ the expected edge overlaps $\E_{\c}[|\mathcal{T}(\c_n)\cap \mathcal{T}'(\c_n)|]$.
\begin{theorem}\label{thm:edge_overlap_det}
Suppose that $(G_n, \c_n)$, $\gamma_n>0$ and $\delta_n\rightarrow 0$ satisfy Assumption \ref{assum:for_edge_overlap}, $\c_n$ is connected and $G_n$ is simple.

Then $ \mathcal{O}(\c_n)=O(\delta_n)|V(G_n)|.$
\end{theorem}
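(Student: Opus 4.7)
The plan is to reduce to the per-edge inclusion probabilities via Kirchhoff's formula and independence, then use Lemma~\ref{lemma_main:Reff_edge} to obtain a uniform $O(\delta_n)$ bound on the marginals for edges whose both endpoints lie in $W_n$, and finally handle the few edges touching $B:=V(G_n)\setminus W_n$ separately, using $|B|\le\delta_n|V(G_n)|$ and $\sum_{v\in B}C_v<\delta_n\gamma_n|V(G_n)|$.

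Concretely, since $\mathcal{T}$ and $\mathcal{T}'$ are independent, Kirchhoff's formula gives
$$\mathcal{O}(\c_n)\;=\;\sum_{e=(u,v)\in E(G_n)}\P\bigl(e\in\wst(\c_n)\bigr)^2\;=\;\sum_{e=(u,v)}c_n(e)^2\,\Reff(u\leftrightarrow v)^2,$$
and bounding one factor of $c_n(e)\Reff(e)$ trivially by $1$ together with the identity $\sum_{e}c_n(e)\Reff(e)=\E|\wst|=|V(G_n)|-1$ yields
$$\mathcal{O}(\c_n)\;\le\;\Bigl(\max_{e}\P\bigl(e\in\wst(\c_n)\bigr)\Bigr)\cdot(|V(G_n)|-1).$$
So a uniform bound $\P(e\in\wst(\c_n))=O(\delta_n)$ on the edges driving the sum is what I need.

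For good edges $e=(u,v)$ with $u,v\in W_n$, Lemma~\ref{lemma_main:Reff_edge} should give $\Reff(u\leftrightarrow v)=O(1/\gamma_n)$, which combined with $c_n(e)\le\delta_n\gamma_n$ from the third bullet of Assumption~\ref{assum:for_edge_overlap} yields $\P(e\in\wst(\c_n))=O(\delta_n)$, contributing $O(\delta_n|V(G_n)|)$ by the display above restricted to good edges. For bad edges incident to $B$, I would combine the estimate $\sum_{e\cap B\neq\emptyset}c_n(e)\le\sum_{v\in B}C_v\le\delta_n\gamma_n|V(G_n)|$ with an extension of the $O(1/\gamma_n)$ $\Reff$-bound to edges touching $B$, obtained by the triangle inequality for effective resistance together with an auxiliary bound $\Reff(w\leftrightarrow W_n)=O(1/\gamma_n)$ from shorting $W_n$ and running the flow argument behind Lemma~\ref{lemma_main:Reff_edge}. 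This gives
$$\sum_{e\cap B\neq\emptyset}\P(e)^2\;\le\;\sum_{e\cap B\neq\emptyset}\P(e)\;\le\;O(1/\gamma_n)\sum_{e\cap B\neq\emptyset}c_n(e)\;=\;O(\delta_n|V(G_n)|).$$

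The main obstacle is precisely this effective-resistance control for edges touching $B$: inside $W_n$ the no-heavy-edge hypothesis excludes the local bottlenecks that would inflate $\Reff(u,v)$ to $1/c_n(u,v)$, but a leaf-like $v\in B$ with a single low-conductance edge genuinely has $\Reff$ comparable to that edge's reciprocal conductance, so one cannot hope for a literal uniform $O(1/\gamma_n)$ bound. The delicate point -- the same sort of issue flagged in Remark~\ref{remark:NP_countereg} -- is to show that \emph{on aggregate} the $\Reff$-behavior of bad edges is still controlled by the $\delta_n$-smallness of $B$, e.g.~by proving $\sum_{v\in B}\E[\deg_v(\wst)]=O(\delta_n|V(G_n)|)$ directly via a spatial-Markov contraction onto $G_n/B$ and the fact that the vertices outside $W_n$ together carry only a $\delta_n$-fraction of the total conductance.
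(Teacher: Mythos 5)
Your decomposition $\mathcal O(\c_n)=\sum_e\bigl(c(e)\Reff(e)\bigr)^2$ is correct, and the plan is to deduce a \emph{uniform} bound $\P(e\in\wst(\c_n))=O(\delta_n)$ for edges with both endpoints in $W_n$ from $\Reff(u\leftrightarrow v)=O(1/\gamma_n)$ together with $c(e)\le\delta_n\gamma_n$. This step is where the argument breaks down. Lemma~\ref{lemma_main:Reff_edge}~a) is a Nash--Williams \emph{lower} bound on $\Reff$, not an upper bound, and the claimed upper bound is simply false: take two large cliques of $W_n$-vertices joined by a single edge $e_0=(u,v)$ with $c(e_0)\le\delta_n\gamma_n$. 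Both $u,v$ have $C_u,C_v\ge\gamma_n$, yet $\Reff(u\leftrightarrow v)\approx 1/c(e_0)\gg 1/\gamma_n$ and $\P(e_0\in\wst)=c(e_0)\Reff(e_0)\approx 1$. Your remark that the no-heavy-edge hypothesis ``excludes the local bottlenecks that would inflate $\Reff(u,v)$ to $1/c_n(u,v)$'' has the logic reversed: small conductance on a cut edge \emph{increases} its effective resistance; heavy edges are what would cap $\Reff$ by $1/c(e)$. The failure mode you flag for vertices in $B$ occurs equally between $W_n$-vertices, so a uniform per-edge bound is not achievable, and the passage $\mathcal O(\c_n)\le(\max_e\P(e))\cdot(|V(G_n)|-1)$ cannot close the argument.

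The paper's proof sidesteps any pointwise control of $\Reff$ or of $\P(e)$. Writing $f(u)=\frac{(1-\delta_n)\mathds 1_{u\in W_n}}{C_u}$ and setting $a=c(e)(f(u)+f(v))$, $x=c(e)\Reff(u\leftrightarrow v)$, Lemma~\ref{lemma_main:Reff_edge}~a) gives $0\le a\le x\le1$, so one can use the elementary inequality $x^2\le a^2+2(x-a)$. Summing over edges, the term $\sum_e 2(x-a)$ is exactly the quantity in Lemma~\ref{lemma_main:Reff_edge}~b) (Foster's formula minus the Nash--Williams lower bound), which is $O(\delta_n)|V(G_n)|$; and the term $\sum_e a^2$ is controlled because $a\le 2c(e)/\gamma_n\le2\delta_n$ uniformly, so $\sum_e a^2\le 2\delta_n\sum_e a=O(\delta_n)|V(G_n)|$. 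The crucial insight is that the sum of squares is dominated by the aggregate \emph{excess} of $\Reff$ over its Nash--Williams lower bound, not by a worst-case per-edge probability; this is precisely the quantity that Lemma~\ref{lemma_main:Reff_edge}~b) controls. Your treatment of edges touching $B$ via a spatial-Markov contraction onto $G_n/B$ is a reasonable idea, but it would still leave the core difficulty (cut edges inside $W_n$) unaddressed, so you should replace the uniform-bound strategy with the excess-sum strategy.
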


Now, we formulate the high degree balanced networks more precisely.
\begin{defi}
    For a sequence of electric networks $(G_n, \c_n)$, $K>1$ and $\gamma_n>0$, we define the set of vertices of \emph{typical conductance} as $$W_n^\tc:=\left\{v\in V(G_n):\; C_v\in [\gamma_n, K\gamma_n]\right\}.$$
    Whenever we would like to emphasize that we are on $(G_n, \c_n)$, we write $W_n^\tc(\c_n)$. For the electric network $(G_n, \mathbf{1}|_{E{(G_n)}})$ and $\gamma_n:=d_n$, we will abbreviate as $W_n^\tc(G_n):=\{v\in V(G_n):\; \deg(v)\in [d_n, Kd_n]\}$.
\end{defi}

\begin{assum}\label{assum:for_local_lim}
Suppose that $(G_n, \c_n)$, $K>1$, $\gamma_n>0$, $\delta_n\rightarrow 0$ and $n_0\in \mathbb{N}$ satisfy that for any for $\forall n\ge n_0$,
    \begin{itemize}
        \item[$\circ$] $|W_n^\tc|\ge (1-\delta_n)|V(G_n)|$,
        \item[$\circ$] $\sum_{v\notin W_n^\tc}C_v\le \delta_n\gamma_n |V(G_n)|$ and 
        \item[$\circ$] $c_n(e)\le \delta_n\gamma_n$ for $\forall e\in E(G_n)$.
    \end{itemize} 
\end{assum}

We denote by $\pi_n$ the unique stationary probability measure of the electric network $\c_n$, and by $\lambda_n$ the uniform probability measure of $V(G_n)$. The first and second conditions of Assumption \ref{assum:for_local_lim} imply that $C_{V(G_n)}:=\sum_{w\in V(G_n)}C_w\in [(1-\delta_n)|V(G_n)|\gamma_n,\; (K+\delta_n)|V(G_n)|\gamma_n]$, so for any $A\subseteq V(G_n)$, using also the second condition of the assumption,
\begin{equation}
\begin{aligned} \label{formula:unif_vs_stationary}
    \pi_n(A)=\sum_{w\in A}\frac{C_w}{C_{V(G_n)}}&\in \left[\frac{|A|\gamma_n-\delta_n\gamma_n|V(G_n)|}{(K+\delta_n)|V(G_n)|\gamma_n}, \frac{K\gamma_n |A|+\delta_n\gamma_n|V(G_n)|}{(1-\delta_n)|V(G_n)|\gamma_n}\right]\\
    &\subseteq\left[\frac{1}{K+\delta_n}\lambda_n(A)-\delta_n, \frac{K}{1-\delta_n}\lambda_n(A)+\delta_n\right],
\end{aligned}
\end{equation}
so $\lambda_n(A_n)\rightarrow 0$ is equivalent to $\pi_n(A_n)\rightarrow 0$.

For $\c\equiv 1$, Assumption \ref{assum:for_edge_overlap} means high degree graph, while Assumption \ref{assum:for_local_lim} means high degree almost regular. Moreover, because of the third condition, $\delta_n\ge 1/d_n$ is automatic.

\begin{notation}\label{notation:v_k}
Consider a rooted tree $(T, 1)$ of height $r$ and with vertices $V(T)=\{1, \ldots, k\}$ following the breadth-first search (BFS) of $T$ started at the root vertex 1: we denote by $(p_T(j), j)$ the $j-1^{st}$ edge added by the BFS, where $j$ is the newly explored vertex and $p_T(j)$ its parent.

For $\mathbf{v}_k:=\{v_1, \ldots, v_k\}\in V(G_n)^k$, we write $T(\mathbf{v}_k)$ to denote the subtree of $V(G_n)$ of vertices $V(T(\mathbf{v}_k)):=V(v_1, \ldots, v_k)$ and of edges $E(T(\mathbf{v}_k)):=\{v_iv_{p_T}:\;2\le i\le k\}$.
\end{notation}

\begin{defi}
    We call a $\mathbf{v}_k\in V(G_n)^k$ $T$-compatible if $T(\mathbf{v}_k)$ is an injective $T\mapsto G_n$ homomorphism, i.e., $v_1, \ldots, v_k$ are distinct and $E(T(\mathbf{v}_k))\subseteq E(G_n)$. In this paper, for any $U\subseteq V(G_n)$, we write $$\C(U):=\{\mathbf{v}_k\in U^k:\mathbf{v}_k\text{ is $T$-compatible}\}.$$
    \end{defi}

\begin{notation}
    For an electric network $(G, \c)$, $T$ rooted tree of $k$ vertices and $\mathbf{v}_k$, we write $b_{\c}(v):=\sum_{u:\;u\sim v}\frac{c(u,v)}{C_u}$ and
    $$F_{\c}(T(\mathbf{v}_k)):=\frac{1}{|V(G)|}\exp\left(-\sum_{j=1}^{t}b_{\c}(v_j)\right)\frac{\sum_{j=t+1}^{k}C_{v_j}}{\prod_{j=1}^{k}C_{v_j}}.$$
    
    With a slight abuse of the notation, we also write $b_{G}:=b_{\mathbf{1}|_{E(G_n)}}$ and $F_{G}:=F_{\mathbf{1}|_{E(G_n)}}$, i.e., $b(v):=\sum_{u:\;u\sim v}\frac{1}{\deg(u)}$ and
    $$F_{G}(T(\mathbf{v}_k)):=\frac{1}{|V(G)|}\exp\left(-\sum_{j=1}^{t}b_{G_n}(v_j)\right)\frac{\sum_{j=t+1}^{k}\deg(v_j)}{\prod_{j=1}^{k}\deg(v_j)}.$$
\end{notation}

Now, we can state the generalization of Theorem \ref{thm:local_ust_almost_reg} which can be seen as a generalization of Lemma 3.12 of \cite{hladky2018local}:
\begin{theorem}\label{thm:local_det}
Suppose that $(G_n, \c_n)$, $\gamma_n>0$ and $\delta_n\rightarrow 0$ satisfy Assumption \ref{assum:for_local_lim}, $\c_n$ is connected and $G_n$ is simple. Then, for $X\sim \unif(V(G_n))$, we have
\begin{gather}\label{formula:local_det}
    \left|\P(B_{\wst(\c_n)}(X, r)\cong T)-\sum_{\mathbf{v}_k\in \C(W_n^{\tc})} \frac{F_{\c_n}(T(\mathbf{v}_k))}{|\mathrm{Stab}_T|} \prod_{j=2}^k c(v_{p_T(j)}, v_j)\right|=o(1),
\end{gather}
where $|\mathrm{Stab}_T|$ denotes the number of root preserving automorphisms of $(T, 1)$.
\end{theorem}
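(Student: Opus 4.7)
The plan is to adapt the approach of \cite[Section 2]{nachmias2022local} for $\ust$ on almost regular graphs to $\wst(\c_n)$ on almost balanced networks, proceeding in three main phases. First, I would rewrite
\begin{equation*}
\P(B_{\wst(\c_n)}(X,r)\cong T) = \frac{1}{|V(G_n)|\,|\mathrm{Stab}_T|} \sum_{\mathbf{v}_k\in\C(V(G_n))} \P\big(B_{\wst(\c_n)}(v_1,r)=T(\mathbf{v}_k)\big)
\end{equation*}
as a sum over $T$-compatible embeddings with root $X=v_1$. Then, using the negative association inequality \eqref{formula:neg_assoc} together with Kirchhoff's formula, the Nash--Williams bound $\Reff(u\leftrightarrow v)\le 1/C_u+1/C_v$, and the hypothesis $\sum_{v\notin W_n^\tc}C_v=o(\gamma_n|V(G_n)|)$ from Assumption~\ref{assum:for_local_lim}, I would show that embeddings $\mathbf{v}_k\notin\C(W_n^\tc)$ contribute $o(1)$ to the sum, reducing the problem to the typical contribution.

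For a typical embedding $\mathbf{v}_k\in\C(W_n^\tc)$, I would decompose the event $\{B_{\wst(\c_n)}(v_1,r)=T(\mathbf{v}_k)\}$ as the intersection of $\{E(T(\mathbf{v}_k))\subseteq\wst\}$ and the isolation event $\mathrm{Iso}(\mathbf{v}_k)$ that no edge from an internal $v_j$ ($j\le t$) to $V(G_n)\setminus V(T(\mathbf{v}_k))$ lies in $\wst$. For the tree-edge probability, I would iterate Kirchhoff's formula together with the spatial Markov property, contracting the edges of $T(\mathbf{v}_k)$ one at a time and invoking at each step the effective-resistance estimate of Lemma~\ref{lemma_main:Reff_edge} (the refinement of \cite[Lemma 2.11]{nachmias2022local} giving $\Reff(u\leftrightarrow v)=(1+o(1))(1/C_u+1/C_v)$ for typical pairs and remaining applicable on the contracted graphs); the resulting expression telescopes to $(1+o(1))(\sum_{j=1}^k C_{v_j}/\prod_{j=1}^k C_{v_j})\prod_{j=2}^k c(v_{p_T(j)},v_j)$. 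For the isolation event, working on the contracted graph $G_n/E(T(\mathbf{v}_k))$ where the super-vertex $[T]$ has conductance $C_{[T]}\approx\sum_{j=1}^k C_{v_j}$, I would use first- and second-moment estimates together with the $\Reff$ lemma to obtain
\begin{equation*}
\P\big(\mathrm{Iso}(\mathbf{v}_k)\,\big|\,E(T(\mathbf{v}_k))\subseteq\wst\big) = (1+o(1))\exp\!\bigg(-\sum_{j=1}^t b_{\c_n}(v_j)\bigg)\cdot\frac{\sum_{j=t+1}^k C_{v_j}}{\sum_{j=1}^k C_{v_j}};
\end{equation*}
intuitively, the exponential is the Poisson-limit probability of no extra $\wst$-edges at the internal $v_j$'s, while the size-biased ratio reflects the probability that a required edge connecting $[T]$ to the rest of $\wst(G_n/E(T(\mathbf{v}_k)))$ originates at a leaf rather than an internal vertex.

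Multiplying these two factors, the common $\sum_{j=1}^k C_{v_j}$ cancels and the product equals $|V(G_n)|\cdot F_{\c_n}(T(\mathbf{v}_k))\prod_{j=2}^k c(v_{p_T(j)},v_j)$, which after dividing by the $|V(G_n)|$ from the uniform root yields the claimed formula \eqref{formula:local_det}. The \textbf{main obstacle} is the Poisson-type approximation for the isolation event: unlike in the almost regular setting, where the conductances are asymptotically equal, here they range over $[\gamma_n,K\gamma_n]$, so the joint distribution of the $\wst$-edges at $[T]$ originating from distinct $v_j$'s is sensitive to these varying conductances and second-moment bounds must be tracked uniformly. This is also where the gap in the original \cite{nachmias2022local} proof flagged in Remark~\ref{remark:NP_countereg} is most delicate: one must verify that the hypotheses of Lemma~\ref{lemma_main:Reff_edge} are preserved through each of the $O(k)$ spatial-Markov contractions, and that the accumulated multiplicative errors still compound to a $1+o(1)$ factor overall.
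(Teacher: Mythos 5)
Your overall outline matches the paper's proof closely: reduce to $T$-compatible embeddings with root $X=v_1$, show non-typical embeddings contribute $o(1)$, and for typical ones decompose the event $\{B_{\wst}(v_1,r)=T(\mathbf{v}_k)\}$ as ``tree in $\wst$'' times ``isolation,'' computing the first via iterated Kirchhoff / spatial Markov and Lemma~\ref{lemma_main:Reff_edge}-type effective-resistance asymptotics (cf.~Lemma~\ref{lemma:from_twovert_to_k+1}). The algebraic endpoint is also the same. However there is a real gap in how you propose to compute the isolation probability. You assert ``first- and second-moment estimates'' yield $(1+o(1))\exp(-\sum_{j\le t}b_{\c_n}(v_j))\cdot\frac{\sum_{j>t}C_{v_j}}{\sum_j C_{v_j}}$, attributing the exponential to a Poisson zero event and the ratio to a size-biased choice of where ``the'' required edge lands. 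First- and second-moment bounds do not pin down the probability of the zero event with a multiplicative $1+o(1)$ error; moreover the correction factor $\frac{\sum_{j>t}C_{v_j}}{\sum_j C_{v_j}}=1-\frac{C(\mathbf{v}_t)}{C(\mathbf{v}_k)}$ does not come from a naive ``no edges at internals'' Poisson heuristic (which would contribute an additional $e^{-C(\mathbf{v}_t)/C(\mathbf{v}_k)}$, not $1-C(\mathbf{v}_t)/C(\mathbf{v}_k)$, and $C(\mathbf{v}_t)/C(\mathbf{v}_k)$ is bounded away from $0$). The paper instead runs the inclusion--exclusion to \emph{all} orders (Lemma~\ref{lemma_main:Reff_tree_bal}), where the $i$-th term $S_n(i\mid T(\mathbf{X}_k))$ is carefully evaluated as $b(\mathbf{X}_t)^i+i\frac{C(\mathbf{X}_t)}{C(\mathbf{X}_k)}b(\mathbf{X}_t)^{i-1}$; the extra $i\,C(\mathbf{X}_t)/C(\mathbf{X}_k)$ term, which produces exactly the $(1-C_t/C_k)$ factor after summing, records how contracting the $i$ new edges shifts the super-vertex conductance. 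A second-moment argument would miss this.

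Two further points. First, you do not mention the tightness step (Lemma~\ref{lemma_main:tightness}): before any of this one must rule out contributions from unboundedly large $r$-balls; the paper derives tightness from uniform integrability of $\deg_{\wst}(X)$ via Lemma~\ref{lemma:unifint_implies_tightness}, and it feeds into the ``nice'' reduction (Lemma~\ref{lemma_main:prob_not_nice}). Second, your stated inequality ``Nash--Williams bound $\Reff(u\leftrightarrow v)\le 1/C_u+1/C_v$'' has the wrong direction: Nash--Williams gives the \emph{lower} bound $\Reff\ge 1/C_u+1/C_v$ (up to $1-\delta_n$ factors); the matching upper bound in an averaged sense comes from Foster's formula, which is exactly what Lemma~\ref{lemma_main:Reff_edge}~b),~c) exploit.
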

In Subsection \ref{subsubsection:local-limit_balanced}, we check that Theorem \ref{thm:local_ust_almost_reg} indeed follows from Theorem \ref{thm:local_det}. To do that, we show that $\sum_{\mathbf{v}_k\in \C(W_n^{\tc})} F_{\c_n}(T(\mathbf{v}_k))\prod_{j=2}^k c(v_{p_T(j)}, v_j)\sim te^{-(k-t)}$ where $k=|V(T)|$, $t=|V(B_{T}(1, r-1))|$. This gives us that the asymptotics of $\P(B_{\wst(\c_n)}(X, r)\cong T))$ agree with the ones for the Poisson(1) branching process conditioned to survive forever.

Theorem \ref{thm:local_ust} about the local limit of $\wst^{\beta_n}(G_n)$ follows from Theorem \ref{thm:local_det} and 
Lemmata \ref{lemma:rdm_cond_balanced} and \ref{lemma:F_annealed-quenched} which we state precisely later: in Lemma \ref{lemma:rdm_cond_balanced}, we will check that, for any $d_n$-almost balanced sequence $G_n$ and $\beta_n\ll d_n$, the electric network $(G_n, \{\exp(-\beta_nU_e)\}_e)$ is still almost balanced, and in Lemma \ref{lemma:F_annealed-quenched}, we will show that the sum appearing in (\ref{formula:local_det}) for $(G_n, \{\exp(-\beta_nU_e)\}_e)$ and the sum in (\ref{formula:local_det}) for $(G_n, \mathbf{1})$ agree both in the annealed and the quenched senses.

\subsection{Proofs of the results} \label{subsec:proofs_of_results}

\subsubsection{Edge-overlap for general graphs}\label{subsubsec:edge_overlap}

The starting point for the local limit result of \cite{nachmias2022local} is the following: by the Nash-Williams inequality, it is easy to check that, for any pairs of vertices $(u, v)\in E(G)$,
\begin{gather}\label{formula:by_NW}
    \Reff(u\leftrightarrow v)\ge \frac{1}{\deg(u)+1}+\frac{1}{\deg(v)+1}
\end{gather}
holds in the electric network on sequence of high degree regular simple graphs $G_n$ with conductances $c(e)\equiv 1$. Then, they realized that the sum over $(u,v)\in E(G_n)$ of the RHS of (\ref{formula:by_NW}) is $(1-o(1))|V(G_n)|$. On the other hand, Foster's formula says that $\sum_{(u,v)\in E(G_n)} \Reff(u\leftrightarrow v)=|V(G_n)|-1$, since it is the expected number of edges of the spanning tree by Kirchhoff's Effective Resistance Formula. Combining these two observations, they obtained the asymptotics of the effective resistance between the endpoints of a typical edge: $\Reff(e^{-}\leftrightarrow e^{+})\sim \frac{1}{\deg(e^{-})}+\frac{1}{\deg(e^{+})}$ with high probability for $e=(e^{-}, e^{+})\sim \unif(E(G_n))$.

In Lemma \ref{lemma_main:Reff_edge}, we extend this reasoning from unweighted to weighted graphs and from almost regular high degree sequences of $G_n$'s to high degree $G_n$'s. If $G_n$ is not balanced, then this method is not strong enough for asymptotics of the effective resistance between the endpoints of a typical edge, but it still gives enough information for the proof of Theorem \ref{thm:edge_overlap_ust}.

\begin{lemma}\label{lemma_main:Reff_edge} Suppose that Assumption \ref{assum:for_edge_overlap} is satisfied. Then 
\begin{itemize}
    \item[a)] For every $u, v\in V(G_n)$, we have that
    \begin{gather*}
        \Reff(u \leftrightarrow v)\ge \frac{(1-\delta_n)\mathds{1}_{u\in W_n}}{C_u}+\frac{(1-\delta_n)\mathds{1}_{v\in W_n}}{C_v},
    \end{gather*}
    and $\Reff(u \leftrightarrow v)\ge \frac{1}{2C_u}+\frac{1}{2C_v}$.
    \item[b)] We have \begin{gather}\label{formula:lower_bound_close}
        \sum_{(u, v)\in E(G_n)}  c(u, v)\left(\Reff(u\leftrightarrow v)-\frac{(1-\delta_n)\mathds{1}_{u\in W_n}}{C_u}-\frac{(1-\delta_n)\mathds{1}_{v\in W_n}}{C_v}\right)=O(\delta_n)|V(G_n)|.
    \end{gather}
    \item[c)] For $\tE_n(\eps):=\{(u,v)\in E(G_n):\;\Reff(u\leftrightarrow v)\le\eps\}$, 
    \begin{gather}
        \sum_{(u, v)\in E(G_n)\setminus\tE_n(4/\gamma_n)}c(u,v)\Reff(u\leftrightarrow v)=O(\delta_n)|V(G_n)|.
    \end{gather}
\end{itemize}
\end{lemma}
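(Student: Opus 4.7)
The plan is to combine the Nash--Williams inequality, the parallel law, and Foster's formula; parts (b) and (c) will then follow from (a) at little extra cost.

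For part (a), the universal bound $\Reff(u\leftrightarrow v)\ge 1/(2C_u)+1/(2C_v)$ comes from applying single-cutset Nash--Williams to $\{e:e\ni u\}$ (yielding $\Reff\ge 1/C_u$) and to $\{e:e\ni v\}$ (yielding $\Reff\ge 1/C_v$), then averaging. For the sharper $W_n$-bound, when $u,v\in W_n$ and $(u,v)\notin E(G_n)$, these two cutsets are already disjoint, so two-cutset Nash--Williams directly gives $\Reff\ge 1/C_u+1/C_v$. When $(u,v)\in E(G_n)$, set $G':=G_n\setminus\{(u,v)\}$; in $G'$ the cutsets at $u$ and $v$ are disjoint, so Nash--Williams yields $\Reff_{G'}\ge 1/(C_u-c(u,v))+1/(C_v-c(u,v))\ge 1/C_u+1/C_v$. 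The parallel law
\begin{equation*}
\Reff_{G_n}(u\leftrightarrow v)=\frac{\Reff_{G'}(u\leftrightarrow v)}{1+c(u,v)\,\Reff_{G'}(u\leftrightarrow v)},
\end{equation*}
together with $c(u,v)(1/C_u+1/C_v)\le \delta_n\gamma_n\cdot 2/\gamma_n=2\delta_n$, produces $\Reff_{G_n}\ge (1-2\delta_n)(1/C_u+1/C_v)$; after absorbing constants into $\delta_n$ this matches the claim. If only one of $u,v$ lies in $W_n$, the single-cutset bound at that endpoint is enough.

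For part (b), Kirchhoff's formula and $|E(\wst(\c_n))|=|V(G_n)|-1$ give Foster's identity $\sum_e c(e)\Reff(e)=|V(G_n)|-1$. Exchanging the order of summation,
\begin{equation*}
\sum_{\{u,v\}\in E(G_n)}c(u,v)\left[\frac{\mathds{1}_{u\in W_n}}{C_u}+\frac{\mathds{1}_{v\in W_n}}{C_v}\right]=\sum_{u\in W_n}\frac{1}{C_u}\sum_{v:(u,v)\in E(G_n)}c(u,v)=|W_n|,
\end{equation*}
so the sum of lower bounds from (a) equals $(1-\delta_n)|W_n|\ge (1-\delta_n)^2|V(G_n)|$. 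Since every summand on the left-hand side of \eqref{formula:lower_bound_close} is non-negative by (a), the total is at most $|V(G_n)|-1-(1-\delta_n)^2|V(G_n)|=O(\delta_n)|V(G_n)|$, as required.

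Part (c) follows quickly from (a) and (b). The lower bound $L(e):=(1-\delta_n)[\mathds{1}_{u\in W_n}/C_u+\mathds{1}_{v\in W_n}/C_v]$ satisfies $L(e)\le 2/\gamma_n$ for every edge $e=(u,v)$, because $C_w\ge \gamma_n$ whenever $w\in W_n$. Hence, for any edge with $\Reff(u\leftrightarrow v)>4/\gamma_n$ we have $L(e)\le \Reff(e)/2$, so $\Reff(e)\le 2[\Reff(e)-L(e)]$, and summing,
\begin{equation*}
\sum_{(u,v)\in E(G_n)\setminus \tE_n(4/\gamma_n)}c(u,v)\,\Reff(u\leftrightarrow v)\le 2\sum_{e\in E(G_n)}c(e)[\Reff(e)-L(e)]=O(\delta_n)|V(G_n)|.
\end{equation*}
The main obstacle is the parallel-law argument in (a) for adjacent $u,v$: a direct Nash--Williams with cutsets at $u$ and $v$ fails because both cutsets must contain the shared edge $(u,v)$. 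Removing $(u,v)$ first and then recombining via the parallel law is the fix, and it only costs a multiplicative $(1-O(\delta_n))$ factor precisely thanks to the assumption $c(u,v)\le \delta_n\gamma_n$; without that smallness one would be stuck with the cruder universal bound $1/(2C_u)+1/(2C_v)$, which is too weak to drive (b) and (c).
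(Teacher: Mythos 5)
Your proposal is correct, and parts (b) and (c) match the paper's Foster's-formula subtraction and comparison argument essentially verbatim. The only real difference is the mechanism in part (a) for obtaining two disjoint Nash--Williams cutsets at adjacent $u,v$: the paper subdivides the edge $(u,v)$ by inserting a vertex $a$ with $c(u,a)=c(a,v)=2c(u,v)$ (which leaves $\Reff(u\leftrightarrow v)$ unchanged) and applies Nash--Williams to the edge-boundaries of $u$ and $v$ in the subdivided network, whereas you delete $(u,v)$, apply Nash--Williams in $G'=G_n\setminus\{(u,v)\}$, and recover the original resistance via the parallel law. Both tricks exploit $c(u,v)\le\delta_n\gamma_n$ in the same spirit; your route is in fact at least as sharp pointwise, since the exact parallel-law lower bound $\tfrac{C_u+C_v-2c(u,v)}{C_uC_v-c(u,v)^2}$ dominates the paper's $\tfrac{1}{C_u+c(u,v)}+\tfrac{1}{C_v+c(u,v)}$ by an easy algebraic check. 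However, the crude estimate $c(u,v)\bigl(1/C_u+1/C_v\bigr)\le 2\delta_n$ you use yields $(1-2\delta_n)$ rather than the stated $(1-\delta_n)$ in part (a); this is harmless since it is absorbed into the $O(\delta_n)$ in (b) and (c) (and a slightly more careful parallel-law application recovers the exact constant), but you should not assert outright that it ``matches the claim''~--~either tighten the estimate or note that the statement holds with a modified constant and that this suffices downstream.
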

Lemma \ref{lemma_main:Reff_edge} a) is a simple consequence of the Nash-Williams inequality, b) is a generalization of Lemma 3.1 of \cite{nachmias2022local} and c) is an easy corollary of b). The proof is in Subsection \ref{subsubsec:typ_edge}. Observe that, by Kirchhoff's formula, Lemma \ref{lemma_main:Reff_edge} c) gives us \begin{gather}\label{formula:expected_deg_by_kirchhoff}
\E\left[|\wst(\c)\setminus\tE_n(4/\gamma_n)|\right]=\sum_{(u, v)\in E(G_n)\setminus\tE_n(4/\gamma_n)}c(u,v)\Reff(u\leftrightarrow v)=O(\delta_n)|V(G_n)|.
\end{gather}

\begin{remark}\label{remark:NP_countereg} In Lemma 3.1 of \cite{nachmias2022local}, it is shown that $|E(G_n)\setminus \tE_n(\eps_n)|\le \frac{|V(G_n)|}{\eps_n d_n-2}$ for any $d_n$-regular graph $G_n$ and any $\eps_n>\frac{2}{d_n}$; while Lemma 3.1' of \cite{nachmias2022local} states that $|E(G_n)\setminus \tE_n(\eps_n)|\le \frac{(1+O(\delta_n))|V(G_n)|}{\eps_n d_n-2}$ for any $d_n$-\emph{almost} regular sequence and any $\eps_n>\frac{2}{d_n}$. In this remark, we detail that, although Lemma 3.1 is correct, Lemma 3.1' is not true. 

As a counterexample for Lemma 3.1' of \cite{nachmias2022local}, we consider a $d_n\rightarrow \infty$ regular graph sequence $G_n$ with $|V(G_n)|=n$, and from $G_n$, we construct $G_n'$ by adding $m_n$ vertices $M_n$ to $G_n$, and then, from each vertex of $M_n$, add $f_n$ edges going into $V(G_n)$ in an arbitrary way.

If $m_n\ll n$ and $f_n\ll d_n$, then it is clear that $G'_n$ is $d_n$ almost regular. Additionally, if $m_n>n/d_n$ also holds, then we have a contradiction with Lemma 3.1' for $\eps_n=1/(f_n+1)$. Indeed, the effective resistance between the endpoints of any edge running between $V(G_n)$ and $M_n$ is at least $1/(f_n+1)$ by (\ref{formula:by_NW}), and there are $m_nf_n$ such edges, but by Lemma 3.1', there should be at most $O(n/(d_n/f_n))$ of them, contradicting with our assumption $m_nf_n>n f_n/d_n$.

Lemma 3.1 can not be generalized to Lemma 3.1', as there are worse error terms for almost regular graphs than for regular graphs in the computation. The correct statement of Lemma 3.1' would be that $|E(G_n)\setminus \tE_n(\eps_n)|\le \frac{O(\delta_n)d_n|V(G_n)|}{d_n\eps_n-2}$, but this is not strong enough for the main result.

Fortunately, we can replace Lemma 3.1 with our Lemma \ref{lemma_main:Reff_edge} c): instead of estimating the total number $|E(G_n)\setminus \tE_n(\eps_n)|$, we give a bound on
$\E[|E(\wst(\c))\setminus\tE_n(4/\gamma_n)|]$ by (\ref{formula:expected_deg_by_kirchhoff}), which preserves more information about the effective resistances. 

There are two places in \cite{nachmias2022local} where Lemma 3.1(') was used for (almost) regular sequences: in the proof of their main theorem, to show the $o(1)$ term in formula (24) by checking that most of the vertices are good---meaning that they do not touch an edge from $E(G_n)\setminus \tE_n(\frac{d_n}{\log d_n})$---; and in the proof of the tightness, the bound $|E(G_n)\setminus \tE_n(A_k/d_n)|\le \frac{O(1)|V(G_n)|}{A_k-2}$ was used. The first issue is solved by our Lemma \ref{lemma_main:Reff_edge} c) and a careful choice of typicality: we consider nice tuples---see Definition \ref{def:nice}---instead of good vertices. It turns out that our Lemma \ref{lemma_main:Reff_edge} c) is also suitable for correcting the proof of tightness, but we had to rewrite the proof and use Lemma \ref{lemma:unifint_implies_tightness}.
\end{remark}

\begin{proof}[Proof of Theorems \ref{thm:edge_overlap_ust} and \ref{thm:edge_overlap_det}] 
First, we fix a deterministic sequence $\{(G_n, \c_n)\}_{n}$ satisfying Assumption \ref{assum:for_edge_overlap}. We denote by $\mathcal{T}(\c_n)$ and $\mathcal{T}'(\c_n)$ two independent copies of the weighted spanning tree, and by $\mathcal{O}(\c_n)$ the expected edge overlaps $\E_{\c_n}[|\mathcal{T}(\c_n)\cap \mathcal{T}'(\c_n)|]$. Our aim is to check that $\mathcal{O}(\c_n)=O(\delta_n)|V(G_n)|$.

We write $f(u):=\frac{(1-\delta_n)\mathds{1}_{u\in W_n}}{C_u}\le \frac{1}{\gamma_n}$. If $0\le a\le x\le 1$, then $x^2\le a^2+2(x-a)$ since $t\rightarrow t^2-2t$ is monotone decreasing on $[0,1]$, so choosing $0\le a=c(e)(f(u)+f(v))\le c(e)(\Reff(u\leftrightarrow v)=x\le 1$,
\begin{align*}
    \mathcal{O}(\c_n)&=\sum_{e=(u, v)\in E(G_n)}\left(c(e)\Reff(u\leftrightarrow v)\right)^2
    \\&\le \sum_{e\in E(G_n)}c(e)^2\left(f(u)+f(v)\right)^2+2\sum_{e\in E(G_n)}c(e)(\Reff(u\leftrightarrow v)-f(u)-f(v))\\
    &\le\left(\frac{(1+3\delta_n)2}{\gamma_n}\right)^2\delta_n\gamma_n\sum_{(u, v)\in E(G_n)}c(u,v)+O(\delta_n)|V(G_n)|\\
    &\le \left(\frac{2(1+3\delta_n)}{\gamma_n}\right)^2\delta_n\gamma_n\gamma_n|V(G_n)|+O(\delta_n)|V(G_n)|=O(\delta_n)|V(G_n)|.
\end{align*}

For $\c_n:=\{\exp(-\beta_nU_e)\}_{e\in G_n}$, denote by $A_n$ the event that the sequence satisfies Assumption \ref{assum:for_edge_overlap}. Then $\P(A_n)=1-o(\delta_n)$ by Lemma $\ref{lemma:rdm_cond_balanced}$. By the trivial estimate $|\mathcal{T}\cap \mathcal{T}'|\le |V(G_n)|-1$ on $A_n^c$, we obtained that
$$\mathcal{O}(G_n, \beta_n)=\E[\mathds{1}[A_n]\mathcal{O}(\c_n)]+o(\delta_n)(|V(G_n)|-1)=O(\delta_n)|V(G_n)|.$$
\end{proof}

\subsubsection{The local limit for high degree almost regular and balanced networks}\label{subsubsection:local-limit_balanced}
For the proof of the local convergence, we introduce Lemmata \ref{lemma_main:tightness}, \ref{lemma_main:prob_not_nice}, \ref{lemma:F_on_W^tc} and \ref{lemma_main:Reff_tree_bal} which hold under Assumption \ref{assum:for_local_lim}. We emphasize that the idea of Lemma \ref{lemma_main:Reff_edge} is used several times during the proofs of these lemmata.

For the local convergence, tightness of the $r$-balls is essential: 
\begin{lemma}[Generalization of Theorem 4.2 of \cite{nachmias2022local}]\label{lemma_main:tightness} If $(G_n, \c_n)$ satisfies Assumption \ref{assum:for_local_lim} for some $K>1$, $\gamma_n>0$ and $\delta_n\rightarrow 0$, then for $X\sim \unif(V(G_n))$, we have 
$$\lim_{M\rightarrow\infty}\sup_n\P\left(|B_{\wst(\c_n)}(X, r)|\ge M\right)=0.$$
\end{lemma}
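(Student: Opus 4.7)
The plan is to show $\sup_n\E\bigl[|B_{\wst(\c_n)}(X,r)|\bigr]<\infty$ and conclude by Markov's inequality. I proceed by induction on $r\ge 0$. The cases $r\in\{0,1\}$ are immediate, since $\wst(\c_n)$ has exactly $|V(G_n)|-1$ edges, giving $\E\bigl[|B_{\wst(\c_n)}(X,1)|\bigr]\le 1+2(|V(G_n)|-1)/|V(G_n)|\le 3$.

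For the inductive step, I dominate $|B_{\wst(\c_n)}(X,r)|$ by the number of injective walks of length at most $r$ in $\wst(\c_n)$ starting at $X$. Taking expectations, the negative association~\eqref{formula:neg_assoc} of $\wst$-edges together with Kirchhoff's formula yields
\[
\E\bigl[|B_{\wst(\c_n)}(X,r)|\bigr]\le\sum_{k=0}^{r}\frac{Q_k}{|V(G_n)|},\qquad Q_k:=\sum_{\substack{v_0,\ldots,v_k\in V(G_n)\\\text{distinct}}}\prod_{i=0}^{k-1}p(v_i,v_{i+1}),
\]
where $p(u,v):=c_n(u,v)\Reff(u\leftrightarrow v)\in[0,1]$ is the marginal WST-inclusion probability of $(u,v)$. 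Hence it suffices to show $Q_k=O(|V(G_n)|)$ for each fixed $k\le r$.

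The central input is Lemma~\ref{lemma_main:Reff_edge}. Splitting each edge as \emph{typical} ($(u,v)\in\tE_n(4/\gamma_n)$) or \emph{atypical}, for any $v\in W_n^\tc$ the typical part of $D(v):=\sum_{u\sim v}p(u,v)$ is at most $(4/\gamma_n)C_v\le 4K$, while Lemma~\ref{lemma_main:Reff_edge}(c) bounds the global atypical contribution $\sum_{e\notin\tE_n(4/\gamma_n)}p(e)$ by $O(\delta_n)|V(G_n)|$. Accordingly, any walk that stays inside $W_n^\tc$ and uses only typical edges contributes at most $(4K)^k$ per starting vertex, summing to $O(|V(G_n)|)$.

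The main obstacle is bounding the remaining walks, which touch an atypical edge or enter $V(G_n)\setminus W_n^\tc$ at some step: a single atypical $p(e)$ can be as large as $1$, and there is no pointwise bound on $D(v)$ for $v\notin W_n^\tc$, so a naive iteration of the typical/atypical split would accumulate uncontrolled factors. My plan is a peeling argument: decompose each walk at the first step at which it is atypical or enters $V(G_n)\setminus W_n^\tc$, use the spatial Markov property of $\wst(\c_n)$ to treat the continuation as a WST on a reduced network, and apply the inductive hypothesis to the tail. The global $O(\delta_n)|V(G_n)|$ atypical mass from Lemma~\ref{lemma_main:Reff_edge}(c) together with the lower bound $|W_n^\tc|\ge(1-\delta_n)|V(G_n)|$ ensures that the bad first-step contribution is $o(|V(G_n)|)$; feeding the resulting uniform $L^1$ bound into Lemma~\ref{lemma:unifint_implies_tightness} promotes it to the tightness $\lim_{M\to\infty}\sup_n\P\bigl(|B_{\wst(\c_n)}(X,r)|\ge M\bigr)=0$ required.
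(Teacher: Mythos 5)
The paper does not bound $\E\bigl[|B_{\wst(\c_n)}(X,r)|\bigr]$ directly. Instead it invokes Lemma~\ref{lemma:unifint_implies_tightness} (a mass-transport/unimodularity principle) to reduce tightness of $r$-balls to a single, purely \emph{local} statement: uniform integrability of $\deg_{\wst(\c_n)}(X)$. It then decomposes the degree into three pieces, $\chi_{n,1}$ (edges from $X\in W_n^\tc$ with $\Reff\le 4/\gamma_n$), $\chi_{n,2}$ (edges from $X\in W_n^\tc$ with $\Reff>4/\gamma_n$), and $\chi_{n,3}$ (edges when $X\notin W_n^\tc$). Lemma~\ref{lemma_main:Reff_edge}~c) and the balance assumption give $\E[\chi_{n,2}],\E[\chi_{n,3}]=O(\delta_n)\to 0$, while negative association yields $\P(\chi_{n,1}\ge k)\le (4K)^k/k!$, hence a uniform $L^2$ bound; uniform integrability follows. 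The full-ball tightness is never touched directly.

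Your route is genuinely different but has a real gap precisely at the peeling step. After you split a walk at the first bad step $j$, the tail starting from the specific vertex $v_j$ must be controlled. Your inductive hypothesis asserts $Q_{k'}=O(|V(G_n)|)$, i.e.\ a bound \emph{averaged over the starting vertex}; it gives no pointwise control of $\sum_{v_{j+1},\ldots,v_{k}}\prod p(v_i,v_{i+1})$ for a fixed $v_j\notin W_n^\tc$ (where $D(v_j)$ can be of order $\deg(v_j)$), nor after an atypical edge (where $p(e)$ can be $1$). Iterating the peeling recursively does not close either, because each new bad vertex can again contribute a large factor, and the count of bad steps along a single path is not controlled — only the \emph{total} atypical mass over all edges is $O(\delta_n)|V(G_n)|$. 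Also note that invoking the spatial Markov property here is not compatible with the $Q_k$ framework (which already discarded correlations via negative association), and the reduced network after contraction need not satisfy Assumption~\ref{assum:for_local_lim}.

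Finally, your last sentence is circular: if you truly had $\sup_n\E\bigl[|B_{\wst(\c_n)}(X,r)|\bigr]<\infty$, Markov's inequality alone gives the tightness, and Lemma~\ref{lemma:unifint_implies_tightness} would be unnecessary. Conversely, Lemma~\ref{lemma:unifint_implies_tightness} asks for uniform \emph{integrability} of the degree, which is strictly stronger than an $L^1$ bound (the trivial $\E[\deg_{\wst}(X)]\le 2$ already holds). Recognizing this is what motivates the paper's $L^2$ bound on $\chi_{n,1}$, which your sketch does not supply.
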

By Lemma \ref{lemma:unifint_implies_tightness}, a sufficient condition for tightness is the uniform integrability of the degree in spanning tree of a uniform vertex which we check in Subsection \ref{subsubsec:tightness_nice}.

In the proof of Theorem \ref{thm:local_ust}, we would like to use not only that a typical vertex has degree $\in [\gamma_n, K\gamma_n]$, but we also need a typicality assumption on the edges coming out of a vertex which we call `having typical neighbors'.

\begin{defi}\label{def:typ_neigh}
We say that a $v\in V(G_n)$ has \emph{typical neighbors} if
$$\sum_{u:\; (u, v)\in E(G_n)\setminus\tE_n(4/\gamma_n)}c(u,v)\Reff(u\leftrightarrow v)+\sum_{u\notin W_n^\tc:~(u, v)\in E(G_n)} \frac{c(v, u)}{\gamma_n}\le \sqrt{\delta_n}$$
where $\tE_n(\eps):=\{(u,v)\in E(G_n):\;\Reff(u\leftrightarrow v)\le\eps\}$. The set of vertices having typical neighbors in the electric network $\c_n$ is denoted by $W_n^{\tn}=W_n^{\tn}(\c_n)$.

We write $\tW_n$ for the set of vertices which are both of typical conductances and of typical neighbors, i.e., $$\tW_n:=W_n^{\tc}\cap W_n^{\tn}.$$

We will write $W_n^{\tn}(G_n)$ and $\tW_n(G_n)$ if we are on the electric network $(G_n, \mathbf{1}|_{E(G_n)})$.
\end{defi}

The sums of Definition \ref{def:typ_neigh} will appear naturally in our proofs. For a $v\in \tW_n$, let us write now $N_v$ for the random neighbor of $v$ chosen with probability $\propto c(v, \cdot)$. We are choosing a neighbor this way in several places of this paper. It is easy to check that, with probability at least $1-O(\sqrt{\delta_n})$, we have $N_v\in W_n^\tc$ and $\Reff(v\leftrightarrow N_v)\le \frac{4}{\gamma_n}$. Moreover, regarding the first sum of the definition, it is useful to notice that $\E[\deg_{\wst(\c)\cap E}(v)]=\sum_{u:\; (u, v)\in E}c(u,v)\Reff(u\leftrightarrow v)$ for any vertex $v$ and any edge set $E$ by Kirchhoff's formula, where we take the expectation w.r.t.~the $\wst$ measure.

\begin{defi}\label{def:nice}
We call a subgraph $H$ of $G_n$ nice, if $V(H) \subseteq \tW_{n}$ and $E(H)\subseteq \tE_n(4/\gamma_n)=\emptyset$, where $\tE_n(\eps)=\{(u,v)\in E(G_n):\;\Reff(u\leftrightarrow v)\le \eps\}$ and we write
$$\CN:=\{\mathbf{v}_k\in V(G_n)^k:\text{ is $T$-compatible and $T(\mathbf{v}_k)$ is nice}\}.$$
\end{defi}

\begin{lemma}\label{lemma_main:prob_not_nice} If $(G_n, \c_n)$ satisfies Assumption \ref{assum:for_local_lim}, then $\tW_n=(1-O(\sqrt{\delta_n}))|V(G_n)|$, and for $X\sim \unif(V(G_n))$, we have
\begin{gather*}\label{formula:prob_nonnice}
        \P(B_{\wst(\c_n)}(X, r) \text{ is not nice})\rightarrow 0.
    \end{gather*}
\end{lemma}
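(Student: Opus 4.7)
The statement has two parts, and both hinge on Lemma~\ref{lemma_main:Reff_edge} together with the assumption on the network.

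For the cardinality bound, note that $|V(G_n)\setminus W_n^{\tc}|\le\delta_n|V(G_n)|$ by Assumption~\ref{assum:for_local_lim}, so it suffices to bound $|V(G_n)\setminus W_n^{\tn}|$. Denoting by $S(v)$ the left-hand side of the inequality in Definition~\ref{def:typ_neigh}, I sum over $v$ and interchange the order; each edge in $E(G_n)\setminus\tE_n(4/\gamma_n)$ is then counted once from each endpoint, and similarly for the second term, so
\[
\sum_{v} S(v) \;=\; 2\sum_{e\in E(G_n)\setminus\tE_n(4/\gamma_n)} c_n(e)\,\Reff(e) \;+\; \frac{2}{\gamma_n}\sum_{u\notin W_n^{\tc}} C_u.
\]
The first term is $O(\delta_n)|V(G_n)|$ by Lemma~\ref{lemma_main:Reff_edge}~c), and the second is at most $2\delta_n|V(G_n)|$ by the second condition of Assumption~\ref{assum:for_local_lim}. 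Markov's inequality then gives $|\{v:S(v)>\sqrt{\delta_n}\}|=O(\sqrt{\delta_n})|V(G_n)|$, hence $|\tW_n|\ge(1-O(\sqrt{\delta_n}))|V(G_n)|$.

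For the niceness probability, I first reduce to bounded-size balls via Lemma~\ref{lemma_main:tightness}: for any $\eta>0$ there exists $M$ with $\P(|B_{\wst(\c_n)}(X,r)|>M)<\eta$ uniformly in $n$, so on the complement it suffices to show that the expected numbers of bad vertices and of bad edges in the ball both tend to $0$. For the vertex count, using that $X$ is uniform and the tree-distance is symmetric,
\[
\E\bigl[|B_{\wst(\c_n)}(X,r)\setminus\tW_n|\bigr] \;=\; \frac{1}{|V(G_n)|}\sum_{v\notin\tW_n}\E\bigl|B_{\wst(\c_n)}(v,r)\bigr|,
\]
and I apply H\"older's inequality with the $L^{1+\varepsilon}$ bound on ball sizes coming from the uniform integrability of tree-degrees underlying the tightness proof (Lemma~\ref{lemma:unifint_implies_tightness}), combined with the cardinality bound from the first paragraph, to conclude that this expectation is $o(1)$. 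For the edge count, I regroup summations over endpoints: $\sum_{e=(u,v)\in\wst(\c_n)\setminus\tE_n(4/\gamma_n)}(|B_{\wst(\c_n)}(u,r)|+|B_{\wst(\c_n)}(v,r)|)=\sum_{v}|B_{\wst(\c_n)}(v,r)|\,\deg_{\wst(\c_n)\setminus\tE_n(4/\gamma_n)}(v)$, and control this via Cauchy-Schwarz using the same $L^{1+\varepsilon}$ ball-size bound together with $\E|\wst(\c_n)\setminus\tE_n(4/\gamma_n)|=O(\delta_n)|V(G_n)|$ from Kirchhoff's formula and Lemma~\ref{lemma_main:Reff_edge}~c).

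The main obstacle is that the typicality condition $S(v)\le\sqrt{\delta_n}$ for $v\in\tW_n$ controls only edges from $v$ to $V(G_n)\setminus W_n^{\tc}$ and edges in $E(G_n)\setminus\tE_n(4/\gamma_n)$, so it does not directly prevent a tree-neighbor $u$ of $v$ from lying in the thin set $W_n^{\tc}\setminus W_n^{\tn}$. I plan to handle this by combining the cardinality bound $|W_n^{\tc}\setminus W_n^{\tn}|=O(\sqrt{\delta_n})|V(G_n)|$ from the first paragraph with the same H\"older/Cauchy-Schwarz averaging as above. If the $L^{1+\varepsilon}$ estimates turn out insufficient, the fallback is to reveal $B_{\wst(\c_n)}(X,r)$ layer-by-layer using Wilson's algorithm and the spatial Markov property, applying Lemma~\ref{lemma_main:Reff_edge} to the quotient graph at each layer (with Rayleigh monotonicity to transfer effective-resistance bounds) and using $S(v)\le\sqrt{\delta_n}$ at each of the at most $M$ boundary vertices to bound the conditional probability that a new bad vertex or edge appears, yielding an overall $O(M\sqrt{\delta_n})\to 0$ estimate.
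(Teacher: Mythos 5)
Your proof of the cardinality bound is correct and takes a different, cleaner route than the paper. Writing $S(v)$ for the left-hand side of the inequality in Definition~\ref{def:typ_neigh}, you sum $S(v)$ over $v$ and interchange: the first summand contributes $2\sum_{e\in E(G_n)\setminus\tE_n(4/\gamma_n)}c(e)\Reff(e)=O(\delta_n)|V(G_n)|$ by Lemma~\ref{lemma_main:Reff_edge}~c), the second contributes $\frac{1}{\gamma_n}\sum_{u\notin W_n^\tc}C_u\le\delta_n|V(G_n)|$ by Assumption~\ref{assum:for_local_lim}, and Markov's inequality gives $|V(G_n)\setminus W_n^\tn|=O(\sqrt{\delta_n})|V(G_n)|$, hence $|V(G_n)\setminus\tW_n|=O(\sqrt{\delta_n})|V(G_n)|$. (Minor: the second piece totals $\frac{1}{\gamma_n}\sum_{u\notin W_n^\tc}C_u$, not twice that, but the extra factor is harmless.) The paper proves the same bound via a stationary-edge argument, sampling $(X_1,X_2)$ with $X_1$ stationary and $X_2\propto c(X_1,\cdot)$ and comparing $\P(X_2\in W_n^\tc)$ against $\P(X_2\notin W_n^\tc\given X_1\in W_n^\tc\setminus\tW_n)$; note that this conditional probability is controlled by the second summand of $S(v)$ only, whereas a vertex can fail to be in $W_n^\tn$ because of the first summand alone, so your direct double-counting handles all cases more transparently.

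For the niceness probability your high-level plan (use tightness to restrict to bounded balls, then union-bound over bad vertices and edges) matches the paper's, but the implementation has a gap. The H\"older/Cauchy--Schwarz step needs a uniform moment bound such as $\sup_n\E\bigl[|B_{\wst(\c_n)}(X,r)|^{1+\varepsilon}\bigr]<\infty$, while Lemma~\ref{lemma_main:tightness} only gives tightness $\lim_M\sup_n\P(|B_{\wst(\c_n)}(X,r)|>M)=0$, which is strictly weaker; and the uniform integrability of tree-degrees inside Lemma~\ref{lemma:unifint_implies_tightness} does not bootstrap to uniform integrability, let alone an $L^{1+\varepsilon}$ bound, on ball sizes. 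The paper avoids this by truncating \emph{before} counting: on $\{|B_{\wst(\c_n)}(X,2r)|\le M\}$, if $v\in B_{\wst(\c_n)}(X,r)$ then $X\in B_{\wst(\c_n)}(v,r)\subseteq B_{\wst(\c_n)}(X,2r)$, so there are at most $M$ admissible roots, giving $\P(v\in B_{\wst(\c_n)}(X,r),\,|B_{\wst(\c_n)}(X,2r)|\le M)\le M/|V(G_n)|$ uniformly in $v$, and analogously $\P((u,v)\in E(B_{\wst(\c_n)}(X,r)),\,|B_{\wst(\c_n)}(X,2r)|\le M)\le 2Mc(u,v)\Reff(u\leftrightarrow v)/|V(G_n)|$. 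A union bound over $V(G_n)\setminus\tW_n$ and over $E(G_n)\setminus\tE_n(4/\gamma_n)$, combined with your cardinality bound and Lemma~\ref{lemma_main:Reff_edge}~c), yields $O(M\sqrt{\delta_n})$, and letting $n\to\infty$ then $M\to\infty$ finishes. Also, the ``main obstacle'' you raise is not one: a ball vertex lying in $W_n^\tc\setminus W_n^\tn$ is in $V(G_n)\setminus\tW_n$ and is caught by the union bound, so the layer-by-layer Wilson fallback is unnecessary.
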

Lemma \ref{lemma_main:prob_not_nice} is shown in Subsection \ref{subsubsec:tightness_nice}, and it works similarly as the combination of Claim 6.2 and Lemma 4.1 in \cite{nachmias2022local}.

\begin{lemma}\label{lemma:F_on_W^tc}
For any $U_n \subseteq V(G_n)$ and any $0\le f_n\le 1$ satisfying $|V(G_n)\setminus U_n|=O(f_n)|V(G_n)|$, we have that
    \begin{align*}
        \sum_{\mathbf{v}_k\in \C(W_n^\tc)\setminus \C(U_n)}F_{\c_n}(T(\mathbf{v}_k))\prod_{j=2}^k c(v_{p_T(j)}, v_j)=O(\max(f_n, \delta_n)).
    \end{align*}
Moreover, we have that
    \begin{align}\label{formula:F_on_W^tc_second}
    \sum_{\mathbf{v}_k\in \C(W_n^\tc)}\frac{C_{v_1}}{C_{V(G_n)}}\prod_{j=2}^k \frac{c(v_{p_T(j)}, v_j)}{C_{v_{p(j)}}}=1+O(\delta_n).
\end{align}
\end{lemma}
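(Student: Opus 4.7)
On $\C(W_n^\tc)$ every $C_{v_j}\in[\gamma_n,K\gamma_n]$, so using $\exp(-\sum b)\le 1$, $\sum_{j>t}C_{v_j}\le(k-t)K\gamma_n$ and $\prod_jC_{v_j}\ge\gamma_n^k$ I bound
\[
F_{\c_n}(T(\mathbf{v}_k))\le\frac{(k-t)K}{|V(G_n)|\,\gamma_n^{k-1}}.
\]
The factor $\prod_{j=2}^kc(v_{p_T(j)},v_j)$ is an edge-product on $T$ and is independent of the choice of root. For each fixed $j^*\in\{1,\ldots,k\}$ I re-root $T$ at $j^*$ and sum leaves-first: each of the $k-1$ edge sums satisfies $\sum_{v\sim v_{\mathrm{par}}}c(v_{\mathrm{par}},v)=C_{v_{\mathrm{par}}}\le K\gamma_n$ because $v_{\mathrm{par}}\in W_n^\tc$, yielding $(K\gamma_n)^{k-1}$ per fixed $v_{j^*}$. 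Since $|W_n^\tc\setminus U_n|\le|V(G_n)\setminus U_n|=O(f_n)|V(G_n)|$, the contribution with $v_{j^*}\notin U_n$ is at most $O(f_n)|V(G_n)|(K\gamma_n)^{k-1}$; multiplying by the bound on $F_{\c_n}$ and union-bounding over the $k=O(1)$ positions gives $O(f_n)\le O(\max(f_n,\delta_n))$.

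\textbf{Second statement.} The key observation is that
\[
\mu(\mathbf{v}_k):=\frac{C_{v_1}}{C_{V(G_n)}}\prod_{j=2}^k\frac{c(v_{p_T(j)},v_j)}{C_{v_{p_T(j)}}}
\]
is the joint law of a branching random walk on $(G_n,\c_n)$: the root $v_1$ is drawn from the stationary distribution $\pi_n$, and each $v_j$ is obtained from its parent $v_{p_T(j)}$ by a single random-walk step. Summing leaves-first confirms $\sum_{\mathbf{v}_k\in V(G_n)^k}\mu(\mathbf{v}_k)=1$, and since the random-walk step preserves $\pi_n$ every coordinate $v_\ell$ has marginal $\pi_n$. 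Therefore the target sum equals $1$ minus the $\mu$-mass of tuples with either some $v_\ell\notin W_n^\tc$ or some collision $v_i=v_j$. Stationarity together with the second bullet of Assumption~\ref{assum:for_local_lim} gives $\P_\mu[v_\ell\notin W_n^\tc]=\pi_n(V(G_n)\setminus W_n^\tc)=\sum_{v\notin W_n^\tc}C_v/C_{V(G_n)}=O(\delta_n)$. Collisions with $p_T(j)=i$ contribute $0$ since $G_n$ is simple; for $i<j$ with $p_T(j)\ne i$, conditioning on the ancestors of $j$ and on $v_i$ gives $\P_\mu[v_j=v_i]=\E_\mu[c(v_{p_T(j)},v_i)/C_{v_{p_T(j)}}]$, and the pointwise bound $c\le\delta_n\gamma_n$ combined with $\E_\mu[1/C_{v_{p_T(j)}}]=\sum_v\pi_n(v)/C_v=|V(G_n)|/C_{V(G_n)}=O(1/\gamma_n)$ bounds this by $O(\delta_n)$. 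Union-bounding over the $O(1)$-many $\ell$ and pairs $(i,j)$ controls the total bad $\mu$-mass by $O(\delta_n)$, so the sum equals $1+O(\delta_n)$.

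\textbf{Main obstacle.} The delicate step is the collision estimate in~(\ref{formula:F_on_W^tc_second}): distinctness is the genuine source of loss there, and its bound requires both the pointwise smallness $c_n(e)\le\delta_n\gamma_n$ and stationarity of every coordinate under $\mu$ to average $1/C_{v_{p_T(j)}}$ down to $O(1/\gamma_n)$; without either ingredient each collision pair would only give the trivial $O(1)$ bound.
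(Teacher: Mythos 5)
Your proof is correct, and the core of it matches the paper's. For the second statement you interpret the sum as the law of the random $T$-tuple $\mathbf{X}_k$ drawn by a stationary root followed by single random-walk steps along the edges of $T$ (Definition~\ref{def:X_k} in the paper), and you bound the missing mass from vertices falling outside $W_n^\tc$ and from collisions using stationarity and the pointwise bound $c_n(e)\le\delta_n\gamma_n$; this is precisely the paper's mechanism, with the collision estimate matching formula (\ref{formula:prob_notdistinct}) in spirit, and you make the argument more explicit than the paper's terse closing line. For the first statement you diverge slightly: the paper converts the sum into $\P(\mathbf{X}_k\in\C(W_n^\tc)\setminus\C(U_n))$, applies a union bound over coordinates (each stationary), and then passes to the original sum via $F_{\c_n}(T(\mathbf{v}_k))=\Theta(1)\frac{C_{v_1}}{C_{V(G_n)}}\prod_{j\ge 2}C_{v_{p(j)}}^{-1}$; you instead bound $F_{\c_n}$ pointwise and count the edge-conductance products directly by re-rooting $T$ at $j^*$ and summing leaves-first, using $\sum_v c(v_{\mathrm{par}},v)\le C_{v_{\mathrm{par}}}\le K\gamma_n$ at each edge. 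Both rely on exactly the same Assumption-\ref{assum:for_local_lim} ingredients, so the difference is cosmetic; if anything your combinatorial variant makes the $|W_n^\tc\setminus U_n|=O(f_n)|V(G_n)|$ input slightly more transparent and avoids the detour through the stationary measure via~(\ref{formula:unif_vs_stationary}), at the cost of carrying the $K^{O(k)}$ constants by hand.
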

The proof of Lemma \ref{lemma:F_on_W^tc} is in Subsection \ref{subsubsec:explicit}.

\begin{defi}
    We consider $$\A_n(\eps_n):=\left\{\mathbf{v}_k\in \CN:\; \frac{\P\left(T(\mathbf{v}_k)= B_{\wst(\c_n)}(X, r)\right)}{F_{\c_n}(T(\mathbf{v}_k))\prod_{j=2}^k c(v_{p_T(j)}, v_j)}\in (1-\eps_n, 1+\eps_n)\right\}.$$
\end{defi}
\begin{lemma}\label{lemma_main:Reff_tree_bal} If Assumption \ref{assum:for_local_lim} is satisfied, then for any $T$, there exists $\eps_n\rightarrow 0$ such that
\begin{gather}\label{formula:sum_over_comp_not_in_A}
    \sum_{\mathbf{v}_k\in \CN\setminus \A_n} \prod_{e\in T(\mathbf{v}_k)} c(e)\le o(1)|V(G_n)|\gamma_n^{k-1}.
\end{gather}
\end{lemma}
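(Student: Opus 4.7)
My strategy is to give an exact expression for $\P(B_{\wst(\c_n)}(X, r) = T(\mathbf{v}_k))$ in terms of random-walk/effective-resistance quantities on $(G_n,\c_n)$, show that on nice tuples this exact expression matches $F_{\c_n}(T(\mathbf{v}_k))\prod_{j=2}^{k}c(v_{p_T(j)},v_j)$ up to a controllable multiplicative error $\eta(\mathbf{v}_k)$, and then conclude by a Markov argument applied to the aggregate error.

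To obtain the representation, I would condition on $X=v_1$ (a factor $1/|V(G_n)|$) and then add the edges of $T(\mathbf{v}_k)$ in BFS order, alternating Kirchhoff's Effective Resistance Formula---which supplies the factor $c(e)\Reff(e^-\leftrightarrow e^+)$ for each tree edge---with the spatial Markov property, which lets the rest of the computation be carried out in the graph contracted along the edges already added. Finally, I would compute the ``no-leakage'' probability that no extra edge incident to an interior vertex $v_j$ ($j\le t$) is included in $\wst$, again via Kirchhoff in the contracted graph. For $\mathbf{v}_k\in\CN$ these ingredients can be asymptotically matched to those of $F_{\c_n}$: Lemma~\ref{lemma_main:Reff_edge}(a) gives $\Reff(u\leftrightarrow v)\ge(1-\delta_n)(1/C_u+1/C_v)$ while $\mathbf{v}_k\in\CN$ forces $\Reff(u\leftrightarrow v)\le 4/\gamma_n$ on tree edges, pinning these to the ideal $1/C_u+1/C_v$; the no-leakage factor at $v_j$ expands over its neighbors $u$ into a product of terms of the form $1-c(v_j,u)\Reff(v_j,u)$ in the contracted graph (each $1-o(1)$ by the small-conductance assumption), which exponentiates to $\exp(-b_{\c_n}(v_j))$ up to an error bounded by the typical-neighbor sum defining $W_n^{\tn}$; and the leaf contributions combine into the factor $\sum_{j>t}C_{v_j}/\prod_j C_{v_j}$. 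The outcome is
\[
\P(B_{\wst(\c_n)}(X,r)=T(\mathbf{v}_k)) = F_{\c_n}(T(\mathbf{v}_k))\prod_{j=2}^{k}c(v_{p_T(j)},v_j)\cdot(1+\eta(\mathbf{v}_k)).
\]

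The main obstacle will be the aggregate estimate
\[
\sum_{\mathbf{v}_k\in\CN}|\eta(\mathbf{v}_k)|\cdot F_{\c_n}(T(\mathbf{v}_k))\prod_{j=2}^{k}c(v_{p_T(j)},v_j)=o(1).
\]
I would attack this by decomposing $\eta$ into local discrepancies attached to individual vertices and edges of $T(\mathbf{v}_k)$ (atypical-neighbor sums at the $v_j$'s and deviations of $\Reff$ from $1/C_u+1/C_v$ along tree edges), then summing over the remaining coordinates of $\mathbf{v}_k$ so that each local discrepancy is weighted by an essentially uniform factor. Lemma~\ref{lemma_main:Reff_edge}(b,c) together with the typical-neighbor condition defining $\tW_n$ provide exactly the $o(|V(G_n)|)$ savings needed for each such sum; the ``Lemma~2.11 trick'' of \cite{nachmias2022local} acknowledged in this paper is expected to enter here, to transfer effective-resistance estimates to the contracted graphs that appear at each BFS step. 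Once the aggregate error is controlled, the definition of $\A_n(\eps_n)$ gives $|\eta(\mathbf{v}_k)|\ge\eps_n$ for $\mathbf{v}_k\in\CN\setminus\A_n(\eps_n)$, so by Markov's inequality and the lower bound $F_{\c_n}(T(\mathbf{v}_k))\ge\Omega(1/(|V(G_n)|\gamma_n^{k-1}))$ valid on $\CN$ (a consequence of $C_{v_j}\le K\gamma_n$ and $b_{\c_n}(v_j)=O(1)$), choosing $\eps_n\to 0$ slower than the resulting $o(1)$ yields $\sum_{\mathbf{v}_k\in\CN\setminus\A_n(\eps_n)}\prod_{e\in T(\mathbf{v}_k)} c(e)=o(|V(G_n)|\gamma_n^{k-1})$, as claimed.
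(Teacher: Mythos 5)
Your overall strategy — write $\P(T(\mathbf{v}_k)=B_{\wst(\c_n)}(X,r))$ in closed form via Kirchhoff and the spatial Markov property, match it to $F_{\c_n}(T(\mathbf{v}_k))\prod_j c(v_{p_T(j)},v_j)$ up to a controllable error $\eta(\mathbf{v}_k)$ on nice tuples, aggregate the error using Lemma~\ref{lemma_main:Reff_edge} and the typical-neighbor sums, and close with Markov plus the $\Theta(1)$ bound $|V(G_n)|\gamma_n^{k-1}F_{\c_n}(T(\mathbf{v}_k))=\Theta(1)$ on $\CN$ — is indeed the paper's strategy, and the final Markov step and aggregation idea are correct.

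However, the central technical step is treated too loosely to count as a proof. You assert that the ``no-leakage'' probability at an interior vertex $v_j$ ``expands over its neighbors $u$ into a product of terms of the form $1-c(v_j,u)\Reff(v_j,u)$ in the contracted graph''. This product is \emph{not} an exact identity: the events $\{(v_j,u)\in\wst\}$ over the different $u$'s are negatively dependent, and their total expected number is $\Theta(1)$ (indeed $\approx b_{\c_n}(v_j)$), so the error between $\P(\text{no non-tree edge at }v_j)$ and $\prod_u(1-c(v_j,u)\Reff)$ is not even $o(1)$ multiplicatively in general. You can make the product exact by peeling edges one at a time with the spatial Markov property, but then each $\Reff$ must be taken in a graph from which $\Theta(\deg(v_\ell))$ edges have already been deleted at each previously-processed $v_\ell$; deletion only raises effective resistances, and by the time the supervertex has been stripped down to $\approx\sum_{\ell>t}C_{v_\ell}$ the correction term $1/C^{(\text{super})}$ in $\Reff(\text{super}\leftrightarrow u)\approx 1/C_u+1/C^{(\text{super})}$ is of the same order as $1/C_u$, so the factors are off by a non-vanishing multiplicative constant and the naive exponentiation to $e^{-b_{\c_n}(v_j)}$ is unjustified. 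The paper avoids this entirely by writing $P_n^{\text{(eq)}}=P_n^{\text{(sub)}}\sum_{i\ge 0}\frac{(-1)^i}{i!}Q_n(i\mid T(\mathbf{v}_k))$ (inclusion--exclusion over ordered $i$-tuples of extra edges), truncating at a fixed $i_0$ using the uniform bound $Q_n(i\mid\cdot)\le(4Kt+t\sqrt{\delta_n})^i$, showing that $Q_n(i\mid\cdot)$ is close to the explicit $S_n(i\mid\cdot)=b(\mathbf{X}_t)^i+i\frac{C(\mathbf{X}_t)}{C(\mathbf{X}_k)}b(\mathbf{X}_t)^{i-1}$ in aggregate for each $i\le i_0$, and only then summing the truncated alternating series to produce $e^{-b(\mathbf{X}_t)}\cdot\frac{\sum_{j>t}C_{v_j}}{C(\mathbf{X}_k)}$; this works because in the inclusion--exclusion picture you only ever \emph{contract} a bounded number of additional vertices (monotone decrease in $\Reff$ with a controllable amount), rather than delete $\Theta(\gamma_n)$ edges. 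Your proposal has no analogue of this truncation/aggregation machinery, and without it the no-leakage factor — the technical heart of this lemma — is not established.
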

We prove this lemma in Subsection~\ref{subsubsec:explicit}.

\begin{proof}[Proof of Theorem \ref{thm:local_det}] We start by the outline of the proof, and we are detailing each equalities below.

\begin{align}
    \P(B_{\wst(\c_n)}(X, r)\cong T)&=\P(B_{\wst(\c_n)}(X, r)\cong T\text{ and it is nice})+o(1)\label{formula:main_ineq_first_line}\\
    &=\sum_{\mathbf{v}_k\in \CN}\frac{\P(B_{\wst(\c_n)}(v_1, r)=T(\mathbf{v}_k))}{|\mathrm{Stab}_T|}+o(1)\label{formula:main_ineq_second_line}\\
    &=\sum_{\mathbf{v}_k\in \A_n}\frac{\P(B_{\wst(\c_n)}(v_1, r)=T(\mathbf{v}_k))}{|\mathrm{Stab}_T|}+o(1)\label{formula:main_ineq_third_line}\\
    &=(1+o(1))\sum_{\mathbf{v}_k\in \A_n}\frac{F_{\c_n}(T(\mathbf{v}_k))}{|\mathrm{Stab}_T|}\prod_{j=2}^k c(v_{p_T(j)}, v_j)+o(1)\label{formula:main_ineq_fourth_line}\\
    &=(1+o(1))\sum_{\mathbf{v}_k\in\C(W_n^\tc)}\frac{F_{\c_n}(T(\mathbf{v}_k))}{|\mathrm{Stab}_T|}\prod_{j=2}^k c(v_{p_T(j)}, v_j)+o(1). \label{formula:main_ineq_fifth_line}
\end{align}

Now, we detail the previous steps.

The equality of (\ref{formula:main_ineq_first_line}) holds by Lemma \ref{lemma_main:prob_not_nice}.

For (\ref{formula:main_ineq_second_line}), note that $\{B_{\wst(\c_n)}(v_1, r)=T(\mathbf{v}_k)\}=\{B_{\wst(\c_n)}(v_1, r)=T(\mathbf{v'}_k)\}$ if there is a root preserving automorphism between $T(\mathbf{v}_k)$ and $T(\mathbf{v'}_k)$, otherwise these events are disjoint.

For (\ref{formula:main_ineq_third_line}), we check $0\le (\text{the sum in (\ref{formula:main_ineq_second_line})})-(\text{the sum in (\ref{formula:main_ineq_third_line})})=o(1)$by
\begin{align*}
        \sum_{\mathbf{v}_k\in \CN\setminus \A_n}\P(B_{\wst(\c_n)}(v_1, r)=T(\mathbf{v}_k))&= \sum_{\mathbf{v}_k\in \CN\setminus \A_n}\prod_{i=2}^k \Reff \left(v_i \leftrightarrow \{v_1, \ldots, v_{i-1}\} \right)c(v_{p_T(i)}, v_i)\\
        &\le \sum_{\mathbf{v}_k\in \CN\setminus \A_n}\prod_{i=2}^k \Reff \left(v_i \leftrightarrow v_{p_T(i)} \right)c(v_{p_T(i)}, v_i)\\
        &\le \frac{1}{|V(G_n)|}\left(\frac{4}{\gamma_n}\right)^{k-1}\sum_{\mathbf{v}_k\in \CN\setminus \A_n} \prod_{e\in T(\mathbf{v}_k)} c(e)=O(\delta_n^{1/2}),
\end{align*}
where the first step is by the spatial Markov property, the second from monotonicity, the third by the niceness of $v_k$ and the fourth by Lemma \ref{lemma_main:Reff_tree_bal}.

The equality of (\ref{formula:main_ineq_fourth_line}) follows from $\mathbf{v}_k\in \A_n$.

To obtain (\ref{formula:main_ineq_fifth_line}), we are showing that $0\le (\text{the sum in (\ref{formula:main_ineq_fifth_line})})-(\text{the sum in (\ref{formula:main_ineq_fourth_line})})=o(1)$. Keeping in mind that 
$$\C(W_n^\tc)\setminus \A_n\subseteq \left(\C(W_n^\tc)\setminus \C(\tW_n)\right)\cup \{\mathbf{v}_k\in \C(\tW_n):\;E(T(\mathbf{v}_k))\not\subseteq \tE_n(4/\gamma_n) \text{ or}\notin \A_n\},$$
and noting that Lemma \ref{lemma_main:Reff_tree_bal} and the definition of $\tW_n$ gives us
\begin{equation}\label{formula:F_not_A_not_nice_subcomp}
    \begin{aligned}
        &\sum_{\substack{\mathbf{v}_k\in \C(\tW_n):\\E(T(\mathbf{v}_k))\not\subseteq \tE_n(4/\gamma_n)\text{ or }\notin \A_n}} \prod_{j=2}^k c(v_{p_T(j)}, v_j)\\
        &\hspace{3cm}\le |\tW_n| \left(\max_{v\in \tW_n} \sum_{\substack{u:\;u\sim v\\(u, v)\notin \tE_n(4/\gamma_n)}}c(u,v)\right)^{k-1} +\sum_{\mathbf{v}_k\in \CN\setminus \A_n} \prod_{j=2}^k c(v_{p_T(j)}, v_j)\\
        &\hspace{3cm}=o(1)|V(G_n)|\gamma_n^{k-1},
    \end{aligned}
\end{equation}
we can estimate the sum
\begin{equation}
    \begin{aligned}
            &\sum_{\mathbf{v}_k\in\C(W_n^\tc)\setminus \A_n}F_{\c_n}(T(\mathbf{v}_k))\prod_{j=2}^k c(v_{p_T(j)}, v_j)\\
            &\hspace{4cm}=o(1)+\sum_{\substack{\mathbf{v}_k\in \C(\tW_n):\\E(T(\mathbf{v}_k))\not\subseteq \tE_n(4/\gamma_n)\text{ or }\mathbf{v}_k\notin \A_n}}F_{\c_n}(T(\mathbf{v}_k))\prod_{j=2}^k c(v_{p_T(j)}, v_j)
    \\
    &\hspace{4cm}=o(1)+\sum_{\substack{\mathbf{v}_k\in \C(\tW_n):\\E(T(\mathbf{v}_k))\not\subseteq \tE_n(4/\gamma_n)\text{ or }\mathbf{v}_k\notin \A_n}}\frac{O(1)}{|V(G_n)|} \prod_{j=2}^k \frac{c(v_{p_T(j)}, v_j)}{\gamma_n}=o(1),
    \end{aligned}
\end{equation}
where the first estimate follows from Lemma \ref{lemma:F_on_W^tc} with $U_n=\tW_n$ and the second from the simple observation that $F_{\c_n}(T(\mathbf{v}_k))=\Theta(1)|V(G_n)|^{-1}\gamma_n^{-(k-1)}$ for any $\{v_1, \ldots, v_k\}\subseteq \tW_n$ and third from (\ref{formula:F_not_A_not_nice_subcomp}).
\end{proof}

Before proving Theorem \ref{thm:local_ust_almost_reg}, note that, for any electric network satisfying Assumption \ref{assum:for_local_lim} and any $v\in \tW_n$, by the second inequality of Lemma~\ref{lemma_main:Reff_edge} a), we have that
\begin{equation}\label{formula:b(v)_typ}
    \begin{aligned}
        b(v)-\sum_{u\in W_n^\tc:~u\sim v} \frac{c(v, u)}{C_u}&=\sum_{u\notin W_n^\tc:~u\sim v} \frac{c(v, u)}{C_u}\le \sum_{u\notin W_n^\tc:~u\sim v} c(v, u)\left(2\Reff(u\leftrightarrow v)\right)\\
        &\le \sum_{\substack{u\in V(G_n):\; u\sim v\\(u, v)\notin \tE_n(4/\gamma_n)}}2c(u,v)\Reff(u\leftrightarrow v)+\sum_{\substack{u\notin W_n^\tc:~u\sim v\\(u, v)\in \tE_n(4/\gamma_n)}} \frac{8c(v, u)}{\gamma_n}\\
        &= O(\sqrt{\delta_n}).
    \end{aligned}
\end{equation}
Note that (\ref{formula:b(v)_typ}) immediately implies $b(v)\in (1/K, K+O(\delta_n))=\Theta(1)$.

\begin{proof}[Proof of Theorem \ref{thm:local_ust_almost_reg}] By formula (\ref{formula:b(v)_typ}), $b_{\c_n}(v)=1+o(1)$, $C(v)=(1+o(1))\gamma_n$ holds for any $v\in \tW_n$. Then, for any $ \mathbf{v}_k\in \C(\tW_n)$, we have  $F_{\c_n}(T(\mathbf{v}_k))=(1+o(1))\frac{1}{|V(G_n)|}e^{-t}\frac{(k-t)\gamma_n}{\gamma_n^k}$, hence
\begin{align*}
        \P(B_{\wst(\c_n)}(X, r)\cong T)&=\sum_{\mathbf{v}_k\in \C(W_n^{\tc})} \frac{F_{\c_n}(T(\mathbf{v}_k))}{|\mathrm{Stab}_T|} \prod_{j=2}^k c(v_{p_T(j)}, v_j)+o(1)\\
        &=\sum_{\mathbf{v}_k\in \C(\tW_n)} \frac{F_{\c_n}(T(\mathbf{v}_k))}{|\mathrm{Stab}_T|} \prod_{j=2}^k c(v_{p_T(j)}, v_j)+o(1)\\
        &=(1+o(1))\frac{e^{-t}(k-t)}{|\mathrm{Stab}_T|}\sum_{\mathbf{v}_k\in \C(\tW_n)} \prod_{j=2}^k \frac{C_{v_1}}{C_{V(G_n)}}\prod_{j=2}^k \frac{c(v_{p_T(j)}, v_j)}{C_{v_{p(j)}}}+o(1)\\
        &=(1+o(1))\frac{e^{-t}(k-t)}{|\mathrm{Stab}_T|},
\end{align*}
where the first equality follows from Theorem \ref{thm:local_det}, the second from Lemma \ref{lemma:F_on_W^tc} with $U_n=\tW_n$, the third from the explicit asymptotics above, and the fourth from Lemma \ref{lemma:F_on_W^tc} with $U_n=\tW_n$ and $F_{\c_n}(T(\mathbf{v}_k))=\Theta(1)\frac{C_{v_1}}{C_{V(G_n)}}\prod_{j=2}^k \frac{1}{C_{v_{p(j)}}}$ for any $ \mathbf{v}_k\in \C(\tW_n)$.

Writing $\Gamma$ the Poisson(1) branching process conditioned to survive forever, in Subsection 1.2 of \cite{nachmias2022local}, it is shown that $\P(B_{\wst(\c_n)}(X, r)\cong T)=\frac{e^{-t}(k-t)}{|\mathrm{Stab}_T|}$, which finishes our proof.
\end{proof}

\subsubsection{The local limits of $\ust(G_n)$ and $\wst^{\beta_n}(G_n)$ agree}\label{subsubsection:wst_ust_agree}

\begin{lemma}\label{lemma:rdm_cond_balanced} We consider $G_n$, $d_n\gg 1$, $\beta_n\ll d_n$ and $\widetilde{\delta}_n$ satisfying $\left(\max(\beta_n, 1)/d_n\right)^{1/2}\le O(\widetilde{\delta}_n)\ll 1$. We denote our random electric network by $\c_{\beta_n}:=\{\exp(-\beta_nU_e)\}_{e\in E(G_n)}$ and we write $\mu_n:=\E\left[e^{-\beta_nU_f}\right]=\frac{1-e^{-\beta_n}}{\beta_n}$.

\begin{itemize}
    \item[a)] If the set of vertices with high degree satisfy $|\left\{v\in V(G_n):\; \deg(v)\ge d_n\right\}|\ge (1-\widetilde{\delta}_n)|V(G_n)|$, then Assumption \ref{assum:for_edge_overlap} holds for $\c_{\beta_n}$, $\gamma_n:=\frac{\mu_nd_n}{2}$ and $\delta_n:=2\widetilde{\delta}_n$ with probability $1-o(\widetilde{\delta}_n)$.
    \item[b)] If $\{(G_n, \mathbf{1})\}$ is high degree almost balanced in the sense the it satisfies Assumption \ref{assum:for_local_lim} with parameters $\widetilde{K}$, $d_n$ and $\widetilde{\delta}_n$, then, with probability $1-o(\widetilde{\delta}_n)$, $\{(G_n, \c_{\beta_n})\}$ is also high degree almost balanced: Assumption \ref{assum:for_local_lim} holds for $\gamma_n:=\mu_nd_n/2$, $K:=2\widetilde{K}+1$ and $\delta_n:=(2\widetilde{K}+3)\widetilde{\delta}_n$.
\end{itemize}
\end{lemma}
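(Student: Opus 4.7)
My plan is to derive both parts from a Bernstein-type concentration bound for $C_v = \sum_{u \sim v} e^{-\beta_n U_{uv}}$, which for each fixed $v$ is a sum of $\deg(v)$ independent variables in $[e^{-\beta_n}, 1]$ with mean $\mu_n$ and per-term variance $\sigma^2 \le \mu_n$ (since $\E[X^2]/\mu_n = (1+e^{-\beta_n})/2 \le 1$). I will write $M := d_n/\max(\beta_n, 1)$, which tends to infinity by $\beta_n \ll d_n$, and use the standing hypothesis $\widetilde{\delta}_n \ge cM^{-1/2}$ together with its two immediate consequences: $M^{1/2}e^{-cM} = o(\widetilde{\delta}_n)$ for any $c>0$, and $\delta_n\gamma_n = \widetilde{\delta}_n \mu_n d_n \ge c'\sqrt{M} \to \infty$. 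In particular, the edge-weight condition in both Assumption \ref{assum:for_edge_overlap} and Assumption \ref{assum:for_local_lim} is immediate because $c_{\beta_n}(e) \le 1 \le \delta_n\gamma_n$ for $n$ large.

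For part (a), fix $v$ with $\deg(v) \ge d_n$, so $\E[C_v] = \deg(v)\mu_n \ge 2\gamma_n$. Bernstein's inequality gives $\P(C_v < \gamma_n) \le \P(|C_v - \E[C_v]| > \E[C_v]/2) \le 2\exp(-c \deg(v)\mu_n) \le 2e^{-cM}$. Applying Markov's inequality to the count of such $v$'s with $C_v < \gamma_n$, this count exceeds $\widetilde{\delta}_n|V(G_n)|$ only on an event of probability $O(M^{1/2}e^{-cM}) = o(\widetilde{\delta}_n)$, so combining with the $\le \widetilde{\delta}_n|V(G_n)|$ low-degree vertices gives $|W_n| \ge (1-2\widetilde{\delta}_n)|V(G_n)|$ as required.

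For part (b) the first condition is proven in the same way, except that a \emph{constant} relative deviation suffices: if $\deg(v) \in [d_n, \widetilde{K}d_n]$ then $\E[C_v] \in [2\gamma_n, 2\widetilde{K}\gamma_n]$, so requiring $|C_v - \E[C_v]| \le (1/(2\widetilde{K}))\E[C_v]$ (Bernstein failure probability $2e^{-cM}$) places $C_v$ inside $[\gamma_n,(2\widetilde{K}+1)\gamma_n] = [\gamma_n, K\gamma_n]$. For the second condition I write $V(G_n) \setminus W_n^\tc(\c_{\beta_n}) \subseteq B_1 \cup B_2$ with $B_1 := V(G_n) \setminus W_n^\tc(G_n)$ (deterministic) and $B_2 := \{v \in W_n^\tc(G_n) : C_v \notin [\gamma_n, K\gamma_n]\}$, and further split $B_2 = B_2^- \cup B_2^+$ by $C_v < \gamma_n$ vs.\ $C_v > K\gamma_n$. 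The $B_1$ part has mean $\mu_n \sum_{v \in B_1}\deg(v) \le 2\widetilde{\delta}_n\gamma_n|V(G_n)|$ by the second hypothesis of Assumption \ref{assum:for_local_lim} applied to $(G_n, \mathbf{1})$, and concentrates at this scale by Bernstein for the independent-edge sum $\sum_{v \in B_1}C_v = \sum_{e}c_{\beta_n}(e)(\mathbf{1}_{u \in B_1} + \mathbf{1}_{v \in B_1})$; the $B_2^-$ part is trivially $\le |B_2|\gamma_n \le \widetilde{\delta}_n\gamma_n|V(G_n)|$ on the event from the first condition.

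The main obstacle is the $B_2^+$ contribution: the naive bound $\sum_{v \in B_2^+}C_v \le |B_2^+|\widetilde{K}d_n$ carries an extra factor of $\mu_n^{-1}$ (which can be as large as $\beta_n$) relative to $\gamma_n$ and is therefore too weak. I overcome this by integrating the exponential tail of Bernstein: since $\gamma_n \ge \sigma^2$ for $n$ large, Bernstein lies in its exponential regime $\P(C_v > \E[C_v] + s) \le e^{-cs}$ for $s \ge \gamma_n$, giving
$$\E\bigl[C_v\,\mathbf{1}_{C_v > K\gamma_n}\bigr] \le K\gamma_n\,\P(C_v > K\gamma_n) + \int_{K\gamma_n}^{\infty}\P(C_v > t)\,dt = O(\gamma_n)\,e^{-c\gamma_n}.$$
Summing over $v$ and applying Markov yields $\sum_{v \in B_2^+}C_v \le \widetilde{\delta}_n\gamma_n|V(G_n)|$ outside an event of probability $O(M^{1/2}e^{-c\gamma_n}) = o(\widetilde{\delta}_n)$, because $\gamma_n = \Theta(M)$ dwarfs $\log(1/\widetilde{\delta}_n) = O(\log M)$. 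Combining the $B_1$, $B_2^-$, $B_2^+$ bounds and using $(2\widetilde{K}+3)\widetilde{\delta}_n = \delta_n$ completes the second condition and hence part (b).
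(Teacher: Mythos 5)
Your proposal is correct and takes a genuinely different route from the paper's. The paper proves both parts using its Chebyshev estimate (Lemma~\ref{lemma:chebyshev}), which gives the polynomial tail $O(\max(1,\beta_n)/(N a^2))$; you use Bernstein and get exponential tails. For the first condition of Assumption~\ref{assum:for_local_lim} this is just a choice of tool, both giving $\P(v\notin W_n^\tc(\c_{\beta_n}))=O(\widetilde\delta_n^2)$ for good-degree $v$. The real difference is in the second condition of part~(b). You decompose $V(G_n)\setminus W_n^\tc(\c_{\beta_n})$ into $B_1$ (bad degree), $B_2^-$ ($C_v<\gamma_n$) and $B_2^+$ ($C_v>K\gamma_n$), and you control $B_2^+$ by integrating the exponential tail to get $\E[C_v\mathbf 1_{C_v>K\gamma_n}]=O(\gamma_n)e^{-c\gamma_n}$; this is the delicate part, because a good-degree vertex with overshooting conductance can have $C_{\beta_n}(v)$ as large as $\widetilde K d_n = 2\widetilde K\gamma_n/\mu_n\gg K\gamma_n$ when $\beta_n$ is large. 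The paper's proof instead passes from $\sum_{v\notin W_n^\tc(\c_{\beta_n})}C_{\beta_n}(v)$ to $\sum_{v\notin W_n^\tc(\mathbf 1)}C_{\beta_n}(v)$ while lowering the threshold by $2\widetilde\delta_n\gamma_n|V(G_n)|$, which amounts to asserting $\sum_{v\in W_n^\tc(\mathbf 1)\setminus W_n^\tc(\c_{\beta_n})}C_{\beta_n}(v)\le 2\widetilde\delta_n\gamma_n|V(G_n)|$ without further comment; your $B_2^\pm$ analysis supplies exactly the estimate needed to justify this step. So your argument both replaces Chebyshev by Bernstein and makes explicit a step the paper leaves implicit. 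Two small points to tidy up: the ``exponential regime for $s\ge\gamma_n$'' claim should be justified by observing that for $t\ge K\gamma_n$ one has $s:=t-\E[C_v]\ge\gamma_n$ and $\E[C_v]\le 2\widetilde K\gamma_n\le 2\widetilde Ks$, so the Bernstein denominator is $O_{\widetilde K}(s)$ and the tail is $e^{-c(\widetilde K)s}$; and the $B_1$ term needs a bit more budget than its mean $\le 2\widetilde\delta_n\gamma_n|V(G_n)|$ to absorb the fluctuation, so the allocation among $B_1,B_2^\pm$ should be rebalanced --- but since $\widetilde\delta_n^2\gamma_n|V(G_n)|\to\infty$ this fluctuation is negligible, and the issue is only constant bookkeeping.
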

We prove Lemma \ref{lemma:rdm_cond_balanced} using Chebyshev's inequality in Subsection \ref{subsubsec:i.i.d.}.

\begin{lemma}\label{lemma:F_annealed-quenched} We consider $G_n$, $d_n\gg 1$, $\beta_n\ll d_n$ and $\widetilde{\delta}_n$ satisfying $\left(\max(\beta_n, 1)/d_n\right)^{1/2}\le O(\widetilde{\delta}_n)\ll 1$. We denote our random electric network by $\c_{\beta_n}:=\{\exp(-\beta_nU_e)\}_{e\in E(G_n)}$ and we write $\mu_n:=\E\left[e^{-\beta_nU_f}\right]=\frac{1-e^{-\beta_n}}{\beta_n}$. Fixed $T$ with $|V(T)|=k$.

If $\{(G_n, \mathbf{1})\}$ is high degree almost balanced with parameters $\widetilde{K}$, $d_n$ and $\widetilde{\delta}_n$, then, denoting $W_n^\tc(\mathbf{1}):=W_n^\tc(G_n, \mathbf{1}, d_n, \widetilde{K})$ and $W_n^\tc(\c_{\beta_n}):=W_n^\tc(G_n, \c_{\beta_n}, \frac{\mu_nd_n}{2}, 2\widetilde{K}+1)$, we have as $n\rightarrow\infty$ that 
    \begin{align}\label{formula:F_annealed}
        &\E\left[\sum_{\mathbf{v}_k\in \C\left(W^\tc_n(\c_{\beta_n})\right)}F_{\c_{\beta_n}}(T(\mathbf{v}_k))\prod_{j=2}^k c_{\beta_n}(v_{p_T(j)}, v_j)\right]=(1+o(1))\sum_{\mathbf{v}_k\in \C(W^\tc_n(\mathbf{1}))}F_{G_n}(T(\mathbf{v}_k)).
    \end{align}
Moreover, for $\mathbf{U}=\{U_e\}_{e\in G_n}$, with probability $1-o(\widetilde{\delta}_n)$,
\begin{align}\label{formula:F_quenched}
        &\E\left[\sum_{\mathbf{v}_k\in \C\left(W^\tc_n(\c_{\beta_n})\right)}F_{\c_{\beta_n}}(T(\mathbf{v}_k))\prod_{j=2}^k c_{\beta_n}(v_{p_T(j)}, v_j)\Given \mathbf{U}\right]=(1+o(1))\sum_{\mathbf{v}_k\in \C(W^\tc_n(\mathbf{1}))}F_{G_n}(T(\mathbf{v}_k)).
    \end{align}
\end{lemma}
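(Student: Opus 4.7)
The guiding principle is that $\mu_n := \E[c_{\beta_n}(e)]$ normalizes the random conductances so that $(G_n, \c_{\beta_n})$ mimics $(G_n, \mu_n \cdot \mathbf{1})$ at the vertex level: on typical $v$, $C_v(\c_{\beta_n}) \approx \mu_n \deg(v)$ and $b_{\c_{\beta_n}}(v) \approx b_{G_n}(v)$. Inserting these into the definition of $F_{\c_{\beta_n}}$ will give $F_{\c_{\beta_n}} \approx \mu_n^{1-k} F_{G_n}$, and combined with $\E[\prod_{j=2}^k c_{\beta_n}(e_j)] = \mu_n^{k-1}$, all powers of $\mu_n$ cancel to recover $F_{G_n}$.

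\textbf{Vertex-level concentration.} For $v$ with $\deg(v) \ge d_n$, $C_v = \sum_{u \sim v} e^{-\beta_n U_{(u,v)}}$ is a sum of $\deg(v)$ i.i.d.~variables in $[0,1]$ with mean $\mu_n$, so $\mathrm{Var}(C_v) \le \mu_n \deg(v)$. Since $\mu_n \gtrsim 1/(1+\beta_n)$, the hypothesis gives $1/(\mu_n \deg(v)) = O(\max(\beta_n,1)/d_n) = O(\widetilde{\delta}_n^2)$, hence Chebyshev yields $C_v = (1+O(\widetilde{\delta}_n^{1/2}))\mu_n \deg(v)$ with probability $1-O(\widetilde{\delta}_n)$. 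After substituting $\mu_n \deg(u)$ for each $C_u$ in $b_{\c_{\beta_n}}(v) = \sum_{u \sim v} c_{\beta_n}(u,v)/C_u$, a second Chebyshev applied to the resulting sum $\mu_n^{-1}\sum_{u \sim v} c_{\beta_n}(u,v)/\deg(u)$ of independent variables with mean $b_{G_n}(v) = \Theta(1)$ (by (\ref{formula:b(v)_typ})) and variance $O(1/d_n)$ gives $b_{\c_{\beta_n}}(v) = b_{G_n}(v) + O(\widetilde{\delta}_n^{1/2})$ with probability $1 - O(\widetilde{\delta}_n)$.

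\textbf{Annealed identity.} For each $\mathbf{v}_k \in \C(W_n^\tc(\mathbf{1}))$, let $E_{\mathbf{v}_k}$ be the event that both concentrations above hold for all $v_j$, $j = 1, \ldots, k$; then $\P(E_{\mathbf{v}_k}^c) = O(\widetilde{\delta}_n)$. On $E_{\mathbf{v}_k}$,
\[
F_{\c_{\beta_n}}(T(\mathbf{v}_k)) = (1 + o(1))\, \mu_n^{1-k} F_{G_n}(T(\mathbf{v}_k)),
\]
and the sets $\C(W_n^\tc(\c_{\beta_n}))$ and $\C(W_n^\tc(\mathbf{1}))$ coincide up to a negligible $O(\widetilde{\delta}_n)$-fraction by Step 1 together with Lemma \ref{lemma:rdm_cond_balanced}. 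Moreover $F_{\c_{\beta_n}}$ depends on each $U_{e_j}$ through only $O(1/d_n)$ of the terms in $C_{v_j}$ and $b_{\c_{\beta_n}}(v_j)$, so conditioning on $(U_{e_j})_{j=2}^k$ perturbs the approximation only by a further $o(1)$ factor; then $\E[\prod c_{\beta_n}(e_j)] = \mu_n^{k-1}$ cancels the $\mu_n^{1-k}$ to give $(1+o(1)) F_{G_n}(T(\mathbf{v}_k))$ per summand. Contributions from $E_{\mathbf{v}_k}^c$ are absorbed using a crude bound $F_{\c_{\beta_n}} \prod c_{\beta_n} = O(F_{G_n})$ valid on the almost-balanced event from Lemma~\ref{lemma:rdm_cond_balanced}, multiplied by $\P(E_{\mathbf{v}_k}^c) = O(\widetilde{\delta}_n)$. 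Summing over $\mathbf{v}_k$ and accounting for the domain change via Lemma~\ref{lemma:F_on_W^tc} concludes (\ref{formula:F_annealed}).

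\textbf{Quenched version and main obstacle.} For (\ref{formula:F_quenched}), the pointwise approximation of $F$ reduces the task to the concentration of
\[
S' := \sum_{\mathbf{v}_k \in \C(W_n^\tc(\mathbf{1}))} F_{G_n}(T(\mathbf{v}_k)) \prod_{j=2}^k c_{\beta_n}(e_j)
\]
around $\mu_n^{k-1} \sum F_{G_n}(T(\mathbf{v}_k))$. A variance computation shows summands are independent whenever their tree edge sets are disjoint; the $O(|V(G_n)|\, d_n^{2k-3})$ pairs sharing at least one edge each contribute at most $O((|V(G_n)|\, d_n^{k-1})^{-2} \mu_n^{2k-3})$ to $\mathrm{Var}(S')$, yielding $\mathrm{Var}(S')/\E[S']^2 = O(1/(|V(G_n)|\, d_n \mu_n)) = O(\widetilde{\delta}_n^2/|V(G_n)|) = o(\widetilde{\delta}_n)$. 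Chebyshev and the approximation step then close (\ref{formula:F_quenched}). The main obstacle is precisely this quenched bound: the original summands $F_{\c_{\beta_n}}(T(\mathbf{v}_k))\prod c_{\beta_n}(e_j)$ are heavily correlated through shared vertices (whose $C_v$'s and $b$'s depend on many common edges), so a direct variance estimate fails. The trick is to replace $F_{\c_{\beta_n}}$ by its deterministic counterpart $\mu_n^{1-k} F_{G_n}$ first, confining the surviving correlations to the sparse family of pairs of tree edge sets sharing an actual edge.
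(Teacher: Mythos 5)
Your vertex-level concentration step matches the paper's Lemma~\ref{lemma:b-cond_concentration} in spirit, and your quenched variance computation (counting pairs of $T$-tuples that share at least one edge, since tuples with disjoint edge sets give independent $\prod c_{\beta_n}$) is a legitimate, more hands-on alternative to the paper's slicker trick of majorizing $F_{G_n}$ by $\Theta(1)\frac{C_{v_1}}{C_{V(G_n)}}\prod\frac{1}{C_{v_{p(j)}}}$, invoking the nonnegativity of the covariances, and identifying the resulting sum with the bounded conditional probability $\P(\mathbf{X}_k\in\cdot\mid\mathbf{U})$. Your key numerical conclusion $\mathrm{Var}(S')/\E[S']^2 = O(1/(\mu_n d_n |V(G_n)|)) = o(\widetilde\delta_n)$ is correct.

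There is, however, a genuine gap in how you dispose of the bad event $E_{\mathbf v_k}^c$, in both the annealed and the quenched steps. The claimed ``crude bound $F_{\c_{\beta_n}}\prod_j c_{\beta_n}(e_j) = O(F_{G_n})$ on the almost-balanced event'' is false as a pointwise bound. On that event $C_{v_j}=\Theta(\mu_n d_n)$, so $F_{\c_{\beta_n}}(T(\mathbf v_k)) = O\!\big(\frac{1}{|V(G_n)|(\mu_n d_n)^{k-1}}\big) = O(F_{G_n}(T(\mathbf v_k))/\mu_n^{k-1})$, while $\prod_j c_{\beta_n}(e_j)\le 1$ but need not be $O(\mu_n^{k-1})$: a single tuple whose tree edges all carry small labels has $\prod_j c_{\beta_n}(e_j)$ of order $1\gg\mu_n^{k-1}$. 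Thus the pointwise bound you actually get is $O(F_{G_n}/\mu_n^{k-1})$, and multiplying by $\P(E_{\mathbf v_k}^c)=O(\widetilde\delta_n)$ and summing gives only $O(\widetilde\delta_n/\mu_n^{k-1})$, which is \emph{not} $o(1)$ once $\beta_n\to\infty$. You could try to save this by arguing near-independence of $\mathds{1}[E_{\mathbf v_k}^c]$ and $\prod_j c_{\beta_n}(e_j)$ so that the product of tree-edge conductances contributes its mean $\mu_n^{k-1}$ even on the bad event; the dependence is indeed weak since each tree edge is an $O(1/d_n)$-fraction of $C_{v_j}$, but this near-independence is precisely the thing that must be quantified and your proposal does not do so. The paper's proof sidesteps the issue entirely: it never bounds term by term on a per-tuple bad event, but instead writes the whole sum as $O(1)\,\frac{C_{\beta_n}(V(G_n))}{\mu_n d_n|V(G_n)|}\,\P(\mathbf X_k\in\cdot\mid\c_{\beta_n})$ (formula~(\ref{formula:F_outside_E})), uses $\P(\cdot)\le 1$, and then controls $\E\big[\mathds{1}[\mathcal E_n(\eps_0)^c]\,C_{\beta_n}(V(G_n))\big]$ via a further concentration event $\mathcal H_n$ for the total conductance together with the computation~(\ref{formula:exp_at_Hc}). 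Without that (or a comparable argument decoupling the bad event from $\prod_j c_{\beta_n}(e_j)$), the annealed identity is not established.
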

We prove Lemma \ref{lemma:F_annealed-quenched} in Subsection \ref{subsubsec:i.i.d.}. The proof is based on that $C_{c_{\beta_n}}(v)$ and $b_{c_{\beta_n}}(v)$ are concentrated for most most of the vertices $v$.

\begin{proof}[Proof of Theorem \ref{thm:local_ust}] Combining the previous results,
    \begin{equation}
\begin{aligned}
    \P(B_{\wst(\c_n)}(X, r)\cong T)&=\E\left[\sum_{\mathbf{v}_k\in \C(W_n^\tc(\c_n))}\frac{F_{\c_n}(T(\mathbf{v}_k))}{|\mathrm{Stab}_T|}\prod_{j=2}^k c(v_{p_T(j)}, v_j)\right]+o(1)\\
    &=\E\left[\sum_{\mathbf{v}_k\in \C(\tW_n(\c_n)\cap\tW_n(G_n))}\frac{F_{\c_n}(T(\mathbf{v}_k))}{|\mathrm{Stab}_T|}\prod_{j=2}^k c(v_{p_T(j)}, v_j)\right]+o(1)
    \\
    &=\sum_{\mathbf{v}_k\in \C(\tW_n(G_n))}\frac{F_{G_n}(T(\mathbf{v}_k))}{|\mathrm{Stab}_T|}+o(1)\\
    &=\sum_{\mathbf{v}_k\in \C(W_n^\tc(G_n))}\frac{F_{G_n}(T(\mathbf{v}_k))}{|\mathrm{Stab}_T|}+o(1)\\
    &=\P(B_{\ust(G_n)}(X, r)\cong T)+o(1),
\end{aligned}
\end{equation}
where the first and fifth equalities follow by Theorem \ref{thm:local_det}, the second and fourth by Lemma \ref{lemma:F_on_W^tc} and the third from Lemma \ref{lemma:F_annealed-quenched}.
\end{proof}

\addtocontents{toc}{\protect\setcounter{tocdepth}{3}}
\subsection{Proofs of the lemmata} \label{subsec:proofs_of_lemmata}
\subsubsection{The effective resistance between the endpoints of a typical edge}
\label{subsubsec:typ_edge}
\begin{proof}[Proof of Lemma \ref{lemma_main:Reff_edge}]
    To show a), we consider the network obtained by adding a vertex $a$ and replacing $(u, v)$ by $(u, a), (a, v)$ with $c(u,a)=c(a, v)=2c(u,v)$. We did not change $\Reff(u \leftrightarrow v)$, and we will apply Nash-Williams inequality with two sets of edges---the edges coming from $u$ and the edges coming from $v$---and we obtain that 
	\begin{align*}
	    \Reff(u \leftrightarrow v) &\ge \frac{1}{2c(u, v)+\sum_{w \notin \{u, v\}} c(u, w) }+\frac{1}{2c(u, v)+\sum_{w \notin \{u, v\}} c(v, w)}\\
     &\ge \frac{1}{C_u+\delta_n\gamma_n}+\frac{1}{C_v+\delta_n\gamma_n}\ge \frac{\mathds{1}_{u\in W_n}}{(1+\delta_n)C_u}+\frac{\mathds{1}_{v\in W_n}}{(1+\delta_n)C_v}\\
     &\ge\frac{(1-\delta_n)\mathds{1}_{u\in W_n}}{C_u}+\frac{(1-\delta_n)\mathds{1}_{v\in W_n}}{C_v}.
	\end{align*}
It is clear from the first line of the previous computation that $\Reff(u \leftrightarrow v)\ge \frac{1}{2C_u}+\frac{1}{2C_v}$.
    
    For part b), from Foster's formula $\sum_{(u,v)\in E(G_n)} c(u,v)\Reff(u \leftrightarrow v)=|V(G_n)|-1\le |V(G_n)|$ we subtract 
    \begin{equation}\label{formula:sum_f}
    \begin{aligned}
         &\sum_{(u,v)\in E(G_n)}c(u,v)(1-\delta_n)\left(\frac{\mathds{1}_{u\in W_n}}{C_u}+\frac{\mathds{1}_{v\in W_n}}{C_v}\right)\ge  \sum_{v\in W_n} \frac{(1-\delta_n)C_v}{C_v}=(1-O(\delta_n))|V(G_n)|
    \end{aligned}
    \end{equation}
    to get the statement.
Now, we check part c). 

If $(u, v)\in E(G_n)\setminus \tE_n(4/\gamma_n)$, then $\Reff(u\leftrightarrow v)\ge \frac{4}{\gamma_n} \ge 2\frac{\mathds{1}[u\in W_n]}{C_u}+2\frac{\mathds{1}[v\in W_n]}{C_v}$ which is equivalent to $\Reff(u\leftrightarrow v)\le 2\Reff(u\leftrightarrow v)-2\frac{\mathds{1}[u\in W_n]}{C_u}-2\frac{\mathds{1}[v\in W_n]}{C_v}$, so part b) implies that
\begin{align*}
    \sum_{(u, v)\notin \tE_n(4/\gamma_n)}c(u, v)\Reff(u\leftrightarrow v)&\le 
    \sum_{(u, v)\notin \tE_n(4/\gamma_n)}2c(u, v)\left(\Reff(u\leftrightarrow v)-\frac{\mathds{1}[u\in W_n]}{C_u}-\frac{\mathds{1}[v\in W_n]}{C_v}\right)\\
    &\le O(\delta_n)|V(G_n)|.
\end{align*}
\end{proof}

Note that Lemma \ref{lemma_main:Reff_edge} gives a result about the effective resistance between the endpoints of a randomly chosen edge:
\begin{corollary}[Analogue of Lemma 3.1 of \cite{nachmias2022local}]\label{cor:Reff_edge}
If a random edge $(X_1, X_2)$ is chosen with probability $\propto c(X_1, X_2)$, then
    \begin{gather*}\P\left(\Reff(X_1\leftrightarrow X_2)\ge \frac{1}{C_{X_1}}+\frac{1}{C_{X_2}}+\eps\right)\le \frac{O(\delta_n)}{\eps}\frac{1}{\gamma_n}.
    \end{gather*}
\end{corollary}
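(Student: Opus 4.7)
The plan is to reduce the statement to a direct application of Markov's inequality to the nonnegative random variable produced by Lemma \ref{lemma_main:Reff_edge}. The key observation is that since $(1-\delta_n)\mathds{1}_{X_i \in W_n}/C_{X_i} \le 1/C_{X_i}$, the event
$$\left\{\Reff(X_1\leftrightarrow X_2)\ge \tfrac{1}{C_{X_1}}+\tfrac{1}{C_{X_2}}+\eps\right\}$$
is contained in
$$\left\{\Reff(X_1\leftrightarrow X_2) - \tfrac{(1-\delta_n)\mathds{1}_{X_1\in W_n}}{C_{X_1}} - \tfrac{(1-\delta_n)\mathds{1}_{X_2\in W_n}}{C_{X_2}} \ge \eps\right\},$$
and, by Lemma \ref{lemma_main:Reff_edge} a), the random variable inside the last event is nonnegative.

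The next step is to compute its expectation. Writing $Z_n := \sum_{(u,v)\in E(G_n)} c(u,v)$, the law of $(X_1, X_2)$ gives
$$\E\!\left[\Reff(X_1\leftrightarrow X_2) - \tfrac{(1-\delta_n)\mathds{1}_{X_1\in W_n}}{C_{X_1}} - \tfrac{(1-\delta_n)\mathds{1}_{X_2\in W_n}}{C_{X_2}}\right] = \frac{1}{Z_n}\sum_{(u,v)\in E(G_n)} c(u,v)\!\left(\Reff(u\!\leftrightarrow\! v) - \tfrac{(1-\delta_n)\mathds{1}_{u\in W_n}}{C_u} - \tfrac{(1-\delta_n)\mathds{1}_{v\in W_n}}{C_v}\right),$$
which is $O(\delta_n)|V(G_n)|/Z_n$ by Lemma \ref{lemma_main:Reff_edge} b). To turn this into the advertised $O(\delta_n)/\gamma_n$, one observes that $Z_n = \frac{1}{2}\sum_{v\in V(G_n)} C_v \ge \frac{1}{2}\sum_{v\in W_n} C_v \ge \frac{1-\delta_n}{2}|V(G_n)|\gamma_n$, using the first condition of Assumption \ref{assum:for_edge_overlap} and $C_v \ge \gamma_n$ for $v\in W_n$.

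Combining these two ingredients via Markov's inequality yields the bound $O(\delta_n)/(\eps\gamma_n)$, as required. There is no real obstacle here beyond bookkeeping: the content is entirely Lemma \ref{lemma_main:Reff_edge} b), and the only subtlety is the passage from $1/C_{X_i}$ to $(1-\delta_n)\mathds{1}_{X_i\in W_n}/C_{X_i}$, which is harmless because it can only enlarge the event, and the lower bound on $Z_n$, which follows immediately from the assumption on $|W_n|$.
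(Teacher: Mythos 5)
Your proof is correct and takes essentially the same approach as the paper: both reduce to the nonnegativity of $\Reff(u\leftrightarrow v)-\tfrac{(1-\delta_n)\mathds{1}_{u\in W_n}}{C_u}-\tfrac{(1-\delta_n)\mathds{1}_{v\in W_n}}{C_v}$ (Lemma~\ref{lemma_main:Reff_edge}~a) together with the $O(\delta_n)|V(G_n)|$ bound on its conductance-weighted sum, and the lower bound $C_{E(G_n)}\ge \tfrac{1-\delta_n}{2}|V(G_n)|\gamma_n$. The only cosmetic difference is that you invoke Lemma~\ref{lemma_main:Reff_edge}~b) directly and phrase the conclusion as Markov's inequality, whereas the paper re-derives the same estimate by combining Foster's formula with (\ref{formula:sum_f}).
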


\begin{proof}
We write $p_{\eps}:=\P\left(\Reff(X_1\leftrightarrow X_2)\ge\frac{1}{C_{X_1}}+\frac{1}{C_{X_2}}+\eps\right)$. Diving Foster's formula and (\ref{formula:sum_f}) by $C_{E(G_n)}:=\sum_{e\in E(G_n)}c(e)\ge(1-\delta_n)\frac{1}{2}|V(G_n)|\gamma_n$
\begin{align*}
		\frac{|V(G_n)|}{C_{E(G_n)}}&\ge\frac{\sum_{(u,v)\in E(K_n)}c(u,v)\Reff(u \leftrightarrow v)}{C_{E(G_n)}}=\E [\Reff(X_1 \leftrightarrow X_2)]\ge \frac{(1-O(\delta_n))|V(G_n)|}{C_{E(G_n)}}+\eps p_{\eps},
	\end{align*}
giving us
$$p_{\eps}\le \frac{1}{\eps}\frac{O(\delta_n)|V(G_n)|}{C_{E(G_n)}}=\frac{O(\delta_n)}{\eps}\frac{1}{\gamma_n}.$$
\end{proof}

\subsubsection{Tightness and nice \texorpdfstring{$r$}{}-balls}
\label{subsubsec:tightness_nice}
\begin{lemma}[Theorem 3.1 of \cite{benjamini2015unimodular}]\label{lemma:unifint_implies_tightness} Let $\{H_n\}$ be a sequence of finite graphs, possibly random, and $V_n$ an
independent uniformly drawn vertex of $H_n$. Assume that $\{\deg(V_n)\}_{n\ge 1}$ are uniformly integrable random variables. Then for any integer $r\ge0$, we have
$$\lim_{M\rightarrow\infty}\sup_{n}\P\left(|B_{H_n}(V_n, r)| \ge M\right)=0.$$
\end{lemma}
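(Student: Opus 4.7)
The plan is to prove by induction on $r$ that $|B_{H_n}(V_n, r)|$ is uniformly integrable, from which tightness follows immediately. The base case $r=0$ is trivial since $|B_{H_n}(V_n, 0)|=1$, and for $r=1$ we have $|B_{H_n}(V_n, 1)| = \deg(V_n) + 1$, which is UI by hypothesis. I strengthen the inductive hypothesis from tightness to UI because, as explained in the last paragraph, tightness alone is not enough to close the induction.

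The inductive step rests on a degree truncation combined with a simple mass-transport identity. Since $V_n$ is uniform on $V(H_n)$ and independent of $H_n$, for any nonnegative functional $g$,
\[
    \E\Bigl[\sum_{u \in B_{H_n}(V_n, r)} g(u, H_n)\Bigr] \;=\; \E\bigl[g(U_n, H_n)\,|B_{H_n}(U_n, r)|\bigr],
\]
where $U_n$ is an independent uniform copy; this is immediate by double-counting pairs $(u,v)$ with $d_{H_n}(u,v) \le r$. Applying it with $g = \mathds{1}[\deg(\cdot) > K]$ together with a union bound yields
\[
    \P\bigl(\exists u \in B_{H_n}(V_n, r):\deg(u) > K\bigr) \;\le\; \E\bigl[\mathds{1}[\deg(U_n) > K]\,|B_{H_n}(U_n, r)|\bigr].
\]

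To bound the right-hand side, I split according to whether $|B_{H_n}(U_n, r)|$ exceeds a threshold $L$:
\[
    \E\bigl[\mathds{1}[\deg(U_n) > K]\,|B_{H_n}(U_n, r)|\bigr] \;\le\; L\,\P(\deg(U_n) > K) + \E\bigl[|B_{H_n}(U_n, r)|\,\mathds{1}[|B_{H_n}(U_n, r)| > L]\bigr].
\]
The inductive UI of $|B_{H_n}(U_n, r)|$ makes the second term arbitrarily small by choosing $L$ large; then UI of $\deg(V_n)$ makes the first term small by choosing $K$ large for that fixed $L$. Hence $\sup_n \P(\exists u \in B_{H_n}(V_n, r):\deg(u) > K) \to 0$ as $K \to \infty$. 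On the complementary event every $u \in B_{H_n}(V_n, r)$ has $\deg(u) \le K$, so $|B_{H_n}(V_n, r+1)| \le (K+1)|B_{H_n}(V_n, r)|$ deterministically. Feeding this bound back into the definition of uniform integrability---and estimating the contribution from the exceptional event via the crude inequality $|B_{H_n}(V_n, r+1)| \le |B_{H_n}(V_n, r)| + \sum_{u \in B_{H_n}(V_n, r)} \deg(u)$ together with another application of the same $(K,L)$ truncation---gives the UI of $|B_{H_n}(V_n, r+1)|$ and closes the induction.

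The main obstacle is precisely the need to upgrade the inductive hypothesis from tightness to UI. With only tightness of $|B_{H_n}(V_n, r)|$, the product $\mathds{1}[\deg(U_n) > K]\,|B_{H_n}(U_n, r)|$ could keep a constant expectation as $K \to \infty$---a rare high-degree vertex might lie in an atypically large ball. The two-parameter $(K, L)$ truncation above exploits UI of both factors to trade their tails off against each other, and this joint control is the technical heart of the argument.
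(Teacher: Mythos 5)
Your strategy is to upgrade the conclusion from tightness to uniform integrability of $|B_{H_n}(V_n,r)|$ and push this through an induction on $r$. That upgraded claim is actually \emph{false} under the stated hypothesis, so the induction cannot close. Concretely, let $H_n$ consist of a star with centre $v^*$ and $\lfloor\sqrt{n}\rfloor$ leaves, together with a cycle on the remaining $\approx n$ vertices, attached to the star by a single edge to make the graph connected. Then $\deg(V_n)$ equals roughly $\sqrt n$ with probability $1/n$, equals $1$ with probability $\approx n^{-1/2}$, and equals $2$ or $3$ otherwise, so
\[
\sup_n \E\bigl[\deg(V_n)\,\mathds{1}[\deg(V_n)>K]\bigr]\le \frac1K\to 0,
\]
i.e.\ $\{\deg(V_n)\}$ is UI; equivalently $|B_{H_n}(V_n,1)|=\deg(V_n)+1$ is UI, so your base case holds. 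But if $V_n$ is $v^*$ or any of the $\approx\sqrt n$ leaves --- an event of probability $\approx n^{-1/2}$ --- then $|B_{H_n}(V_n,2)|\approx\sqrt n$, so for every fixed $L$,
\[
\liminf_n \E\bigl[|B_{H_n}(V_n,2)|\,\mathds{1}[|B_{H_n}(V_n,2)|>L]\bigr]\ge 1.
\]
Thus $\{|B_{H_n}(V_n,2)|\}_n$ is not UI, and the inductive step from $r=1$ to $r=2$ already fails, even though the tightness conclusion of the lemma is true in this example.

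The breakdown is precisely in the final paragraph, where you argue that the contribution of the exceptional event $E_n^c=\{\exists\, u\in B(V_n,r):\deg(u)>K\}$ to $\E[|B(V_n,r+1)|\mathds{1}[|B(V_n,r+1)|>M]]$ can be made small. In the example above (with $r=1$), the event $E_n^c$ has probability $\approx n^{-1/2}$, but on it one has $\sum_{u\in B(V_n,1)}\deg(u)\approx\sqrt n$, so $\E\bigl[\sum_{u\in B(V_n,r)}\deg(u)\,\mathds{1}[E_n^c]\bigr]\approx 1$ uniformly in $n$; no further $(K,L)$ truncation removes this $\Theta(1)$ contribution, since it does not shrink as $M\to\infty$ for fixed $K$, and letting $K\to\infty$ forces $M$ to grow at a rate you do not control. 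Your observation that ``tightness alone is not enough to close the induction'' is a real difficulty, but replacing tightness by UI overshoots: what is needed is a weaker inductive invariant (the paper cites \cite{benjamini2015unimodular} for the proof and only remarks that it rests on mass transport; it does \emph{not} assert UI of the ball sizes). The mass-transport identity you isolated, together with the bound $\P(E_n^c)\le \E[\mathds{1}[\deg(U_n)>K]\,|B_{H_n}(U_n,r)|]$, is a sound starting point, but the inductive hypothesis must be chosen more carefully than full UI of $|B_{H_n}(V_n,r)|$.
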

This lemma is based on mass transport principle.

\begin{proof}[Proof of Lemma \ref{lemma_main:tightness}]
We use the notation $X\sim \unif(V(G_n))$,
\begin{align*}
    \chi_{n, 1}&:=\mathds{1}\left[X\in W_n^\tc\right]\left|\left\{u\in V(G_n): \;(u,X)\in \wst(\c_n)\text{ and }\Reff(X\leftrightarrow u)\le  \frac{4}{\gamma_n}\right\}\right|,\\
    \chi_{n, 2}&:=\mathds{1}\left[X\in W_n^\tc\right]\left|\left\{u\in V(G_n): \;(u,X)\in \wst(\c_n)\text{ and }\Reff(X\leftrightarrow u)>\frac{4}{\gamma_n}\right\}\right|\text{ and }\\
    \chi_{n, 3}&:=\mathds{1}\left[X\in V(G_n)\setminus W_n^\tc\right]\deg_{\wst(\c_n)}(X),
\end{align*}
so $\deg_{\wst(\c_n)}(X)=\chi_{n, 1}+\chi_{n, 2}+\chi_{n, 3}$, and our aim is to check that $\{\chi_{n, 1}\}_{n\ge 1}$, $\{\chi_{n, 2}\}_{n\ge 1}$ and $\{\chi_{n, 3}\}_{n\ge 1}$ are uniformly integrable. 

The uniform integrability of $\{\chi_{n, 2}\}_n$ is easy, since by Lemma \ref{lemma_main:Reff_edge} c), we have 
$$\E\left[\chi_{n, 2}\right]\le \frac{2}{|V(G_n)|}\sum_{(u,v)\in E(G_n)\setminus \tE_n(4/\gamma_n)}c(u,v)\Reff(u\leftrightarrow v)=\frac{2}{|V(G_n)|}O(\delta_n)|V(G_n)|=O(\delta_n)\rightarrow 0.$$

The uniform integrability of $\{\chi_{n, 3}\}_n$ follows similarly: by Lemma \ref{lemma_main:Reff_edge} and Assumption~\ref{assum:for_local_lim},
\begin{align*}
    \E\left[\chi_{n, 3}\right]&\le \frac{1}{|V(G_n)|}\sum_{\substack{x\notin W_n^\tc, u\in V(G_n):\\(x, u)\in \tE_n(4/\gamma_n)}} c(x, u)\frac{4}{\gamma_n}+\frac{1}{|V(G_n)|}\sum_{\substack{x\notin W_n^\tc, u\in V(G_n):\\(x, u)\in E(G_n)\setminus \tE_n(4/\gamma_n)}} c(x, u)\Reff(x\leftrightarrow u)\\
    &\le \frac{4}{|V(G_n)|\gamma_n}\sum_{x\notin W_n^ \tc} C_x+O(\delta_n)\le \frac{4}{|V(G_n)|\gamma_n}\delta_n|V(G_n)|\gamma_n+O(\delta_n)=O(\delta_n)\rightarrow 0.
\end{align*}

For the uniform integrability of $\{\chi_{n, 1}\}_n$, note that the negative association (\ref{formula:neg_assoc}) of the edges of the $\wst$ gives us
\begin{align*}
    \P(\chi_{n, 1}\ge k)&\le \frac{1}{|V(G_n)|}\sum_{x\in W_n^\tc}\frac{1}{k!}\sum_{\substack{(y_1, \ldots, y_k):\;y_i\ne y_j \;\forall i\ne j,\\
    (X,y_i)\in \tE_n(4/\gamma_n) \;\forall i\le k}}\P((x, y_i)\in \wst(\c_n)\;\forall i\le k)\\
    &\le \frac{1}{|V(G_n)|k!}\sum_{x\in W_n^\tc}\left(\sum_{y:\;(x,y)\in \tE_n(4/\gamma_n)}c(x, y)\Reff(x\leftrightarrow y)\right)^k
\le \frac{1}{k!}(K\gamma_n)^k \left(\frac{4}{\gamma_n}\right)^k
\end{align*}
which is $
(O(1/k))^k$ as $k\rightarrow \infty$. Then, we have $\sup_n\E[\chi_{n, 1}^2]<\infty$, so Hölder's inequality implies uniform integrability.
\end{proof}

\begin{proof}[Proof of Lemma \ref{lemma_main:prob_not_nice}] We start by showing that $|\tW_n|=(1-O(\delta_n^{1/2}))|V(G_n)|$ following the idea of Claim 6.2 of \cite{nachmias2022local}: we can choose an edge $(X_1, X_2)$ uniformly by picking $X_1$ from the stationary measure and $X_2$ as a neighbor of $X_1$ with probability proportional to the conductances $c(X_1, \cdot)$. Then, $X_1$ and $X_2$ have stationary distribution, $\P(X_2\in V(G_n)\setminus W_n^\tc)\le \frac{(K+1)}{1-\delta_n}\delta_n$ by (\ref{formula:unif_vs_stationary}), and we have $\P(X_2\in W_n^\tc\given X_1\in W_n^\tc\setminus \tW_n)\le (1-K\sqrt{\delta_n})$ by Definition \ref{def:typ_neigh},
$$1-\delta_n \frac{(K+1)}{1-\delta_n}\le \P(X_2\in W_n^\tc)\le (1-K\sqrt{\delta_n})\P(X_1\in W_n^\tc\setminus \tW_n)+1-\P(X_1\in W_n^\tc\setminus \tW_n),$$
giving us $\P(X_1\in W_n^\tc\setminus \tW_n)=O(\sqrt{\delta_n})$, hence $|W_n^\tc\setminus \tW_n|\le O(\sqrt{\delta_n}+\delta_n)=O(\delta_n)$ by (\ref{formula:unif_vs_stationary}), giving us $|V(G_n)\setminus \tW_n|=(O(\sqrt{\delta_n})+\delta_n)|V(G_n)|=O(\sqrt{\delta_n})|V(G_n)|$. In order to be able to formulate Lemma~\ref{lemma:b-cond_concentration} clearly, we are now writing the previous bounds with some explicit factors: for $\delta_n\le (1/2K)^2$---which is automatic when $n$ is large enough---, we have that $\P(X_1\in W_n^\tc\setminus \tW_n)\le \delta_n \frac{(K+1)}{(1-\delta_n)(1+K\sqrt{\delta_n})}\le 4(K+1)\sqrt{\delta_n}$, and $|W_n^\tc\setminus \tW_n|\le (4(K+1)\sqrt{\delta_n}+\delta_n)(K+\delta_n)\le 2K(4K+5)\sqrt{\delta_n}$ hence
\begin{gather}\label{formula:tW_explicit_bound}
    |V(G_n)\setminus \tW_n|\le (8K+11)K\sqrt{\delta_n}|V(G_n)|=O(\sqrt{\delta_n})|V(G_n)|.
\end{gather}

By tightness shown in Lemma \ref{lemma_main:tightness}, for any $\eps>0$ there exists some $M>0$ such that 
    $$\sup_n\P\left(|B_{\wst(\c_n)}(X, 2r))|>M
    \right)\le \eps.$$
    For any $v\in V(G_n)$ and any $(u,v)\in E(G_n)$, we have that
    \begin{align*}
        \P\left(v\in V(B_{\wst(\c_n)}(X, r)), \; |B_{\wst(\c_n)}(X, 2r))|\le M\right)&\le \frac{M}{|V(G_n)|},\\
        \P\left((u,v)\in E(B_{\wst(\c_n)}(X, r)),\; |B_{\wst(\c_n)}(X, 2r))|\le M\right)&\le \frac{2M}{|V(G_n)|}c(u, v)\Reff(u\leftrightarrow v),
    \end{align*}
    e.g., the reasoning behind the second inequality: $(u,v)\in E(B_{\wst(\c_n)}(X, r))$ can happen if $(u,v)\in E(\wst(\c_n))$ and $\{u, v\}\cap B_{\wst(\c_n)}(X, r-1)\ne \emptyset$. If $|B_{\wst(\c_n)}(X, 2r))|\le M$, then for any $u$, there are at most $M$ such choices of $X$. 
    
    Therefore, by $|\tW_n|=(1-O(\delta_n^{1/2}))|V(G_n)|$, Lemma \ref{lemma_main:Reff_edge} and the union bound,
    \begin{align*}
    &\P\left(B_{\wst(\c_n)}(X, r)\textrm{ is not nice and }|B_{\wst(\c_n)}(X, 2r))|\le M\right)\\
    &\hspace{3.5cm}\le \frac{M}{|V(G_n)|}|V(G_n)\setminus\tW_n|+\frac{2M}{|V(G_n)|}\sum_{(u, v)\in E(G_n)\setminus \tE_n(4/\gamma_n)}c(u, v)\Reff(u\leftrightarrow v)\\
    &\hspace{3.5cm}\le\frac{M}{|V(G_n)|}O(\sqrt{\delta_n})|V(G_n)|+\frac{2M}{|V(G_n)|}O(\delta_n)|V(G_n)|=O(\sqrt{\delta_n}).
    \end{align*}
    Combining these, we obtain $\P(B_{\wst(\c_n)}(X, r)\textrm{ is not nice})\le \eps+O(\sqrt{\delta_n})$. Letting $\eps\rightarrow 0$ finishes the proof.
\end{proof}

\subsubsection{Determining explicit probabilities}\label{subsubsec:explicit}
\begin{lemma}[Analogue of Corollary 3.3 of \cite{nachmias2022local}]\label{lemma:Reff_walk_regular} 
Suppose that Assumption \ref{assum:for_local_lim} is satisfied. Let $(X_1, \ldots, X_k)$ be the first $k$-steps of the random walk w.r.t. $\c_{n, \beta_n}$ started from a stationary vertex. Then
\begin{gather*}\P\left(\Reff(X_1\leftrightarrow X_k)\ge \frac{1}{C_{X_1}}+\frac{1}{C_{X_k}}+\eps\right)\le \frac{O(\delta_n)}{\eps}\frac{1}{\gamma_n}.
    \end{gather*}
\end{lemma}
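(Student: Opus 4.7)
The plan is to extend Corollary~\ref{cor:Reff_edge} from a random edge ($k=2$) to the endpoints of a $k$-step random walk, with the commute time identity taking the place of Foster's formula on the upper-bound side.

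First I would use the commute time identity together with the fact that, by stationarity and reversibility of the walk on $(G_n,\c_n)$, the joint law of $(X_1,X_k)$ is symmetric, to write $\E[\Reff(X_1\leftrightarrow X_k)]=\E[\E_{X_1}[\tau_{X_k}]]/C_{E(G_n)}$. Reversibility gives $\pi_n(u)p^{k-1}(u,v)=\pi_n(v)p^{k-1}(v,u)$, so after swapping roles and applying the strong Markov property at time $k-1$,
\[
\E[\E_{X_1}[\tau_{X_k}]] \;=\; \sum_v \pi_n(v)\bigl(\E_v[T_v^{(k-1)}]-(k-1)\bigr),
\]
where $T_v^{(k-1)}:=\min\{t\ge k-1:Y_t=v\}$ for a walk $(Y_t)_{t\ge 0}$ from $Y_0=v$. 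Kac's identity $\E_v[\tau_v^+]=1/\pi_n(v)$ and a renewal-style bound (using Assumption~\ref{assum:for_local_lim} to control the second moment of $\tau_v^+$) should then give $\E_v[T_v^{(k-1)}]-(k-1)\le (1+O(\delta_n))/\pi_n(v)$, so that $\E[\Reff(X_1\leftrightarrow X_k)]\le (1+O(\delta_n))|V(G_n)|/C_{E(G_n)}$.

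For the lower bound, Nash--Williams at $X_1$ and $X_k$ (or Lemma~\ref{lemma_main:Reff_edge}~a) when they are adjacent) gives $\Reff(X_1\leftrightarrow X_k)\ge (1-\delta_n)\bigl(\mathds{1}_{X_1\in W_n^\tc}/C_{X_1}+\mathds{1}_{X_k\in W_n^\tc}/C_{X_k}\bigr)$ on $\{X_1\ne X_k\}$, and $\P(X_1=X_k)=o(1)$ under Assumption~\ref{assum:for_local_lim}. Taking expectations with $X_i\sim\pi_n$ (so that $\E[\mathds{1}_{X_i\in W_n^\tc}/C_{X_i}]=|W_n^\tc|/C_{V(G_n)}=(1-O(\delta_n))|V(G_n)|/C_{V(G_n)}$) and isolating the tail probability $p_\eps$ of the lemma as in the proof of Corollary~\ref{cor:Reff_edge} gives $\E[\Reff(X_1\leftrightarrow X_k)]\ge (1-O(\delta_n))|V(G_n)|/C_{E(G_n)}+\eps\,p_\eps$. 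Subtracting the two bounds and using $|V(G_n)|/C_{E(G_n)}=O(1/\gamma_n)$ yields $\eps\,p_\eps\le O(\delta_n)/\gamma_n$, which is the claim.

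The main obstacle will be the hitting-time upper bound: for a general reversible chain the forward recurrence time in the $v$-return renewal process can exceed $1/\pi_n(v)$ when $\tau_v^+$ has heavy tails (as on a path graph, which does not satisfy our assumptions). The high-degree almost balanced hypothesis is what provides the control on the variance of $\tau_v^+$ needed to absorb the renewal overshoot into an $O(\delta_n)$ multiplicative error, and making this precise is the technical heart of the argument.
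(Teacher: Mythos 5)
Your skeleton matches the paper: commute time identity, reversibility to symmetrize $(X_1,X_k)$, then a Foster-type lower bound matched against an upper bound on $\E[\Reff(X_1\leftrightarrow X_k)]$, isolating the tail probability $p_\eps$ exactly as in the proof of Corollary~\ref{cor:Reff_edge}. The lower-bound side is also fine. The gap is in the upper bound, which is the whole content of the lemma.

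You propose to show $\E_v[T_v^{(k-1)}]-(k-1)\le (1+O(\delta_n))/\pi_n(v)$ via a renewal-overshoot argument, with Assumption~\ref{assum:for_local_lim} supplying second-moment control of $\tau_v^+$. This is a genuinely different route from the paper's, and it is likely to fail as stated. Even for a ``nice'' high-degree network, $\tau_v^+$ is approximately geometric with mean $1/\pi_n(v)$, so $\var(\tau_v^+)\approx \E[\tau_v^+]^2$; the relative variance is $\Theta(1)$, not $O(\delta_n)$. A Lorden-type overshoot bound $\E[\mathrm{overshoot}]\le \E[(\tau_v^+)^2]/\E[\tau_v^+]$ therefore gives a constant factor like $2$, not $1+O(\delta_n)$. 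What saves you is that $k$ is fixed while $1/\pi_n(v)\to\infty$, but turning that into a clean $(1+O(\delta_n))$ bound via the renewal machinery requires extra work and a careful tail estimate, and you would in any case only be able to get it after averaging over $v\sim\pi_n$, not pointwise as your proposal asserts.

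The paper avoids all of this by decomposing $\E[\E_{X_k}[\tau_{x_1}]]$ over the \emph{last time} $i\in\{0,\dots,k-1\}$ the stationary walk visits $x_1$. Each piece is bounded by $1/\pi_n(x_1)$ (the trivial inequality $\E_{x_1}[(\tau_{x_1}^+-m)\mathds{1}_{\tau_{x_1}^+>m}]\le \E_{x_1}[\tau_{x_1}^+]$), and the number of nonzero pieces beyond the $i=0$ one is controlled by the return probabilities $\sum_{i=1}^{k-1}\P_{x_1}(X_i=x_1)$. Summed against $\pi_n$, these return probabilities are $O(\delta_n)$ because of the third condition of Assumption~\ref{assum:for_local_lim}: conditional on the previous vertex $X_{i-1}=y$ lying in $W_n^\tc$, the step probability $p(y,x_1)=c(y,x_1)/C_y\le \delta_n\gamma_n/\gamma_n=\delta_n$, and the contribution from $X_{i-1}\notin W_n^\tc$ is $O(\delta_n)$ by (\ref{formula:unif_vs_stationary}). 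No moments of $\tau_v^+$ beyond the first are needed. You should replace your renewal-overshoot step with this last-visit decomposition; it is both simpler and is what the hypotheses actually give you.
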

\begin{proof}
We have to check that $\E [\Reff(X_1 \leftrightarrow X_k)]\le \frac{(1+O(\delta_n))|V(G_n)|}{C_{E(G_n)}}$, because, from this, the proof of the lemma follows likewise Corollary \ref{cor:Reff_edge} is obtained from $\E [\Reff(X_1 \leftrightarrow X_2)]=\frac{|V(G_n)|-1}{C_{E(G_n)}}$. Similarly to \cite{nachmias2022local}, we estimate the expected commute time in order to estimate the effective resistances. We would like to use the commute time identity, hence we are fist checking that $\E[\E_{X_k}[\tau_{x_1}]]\le\frac{1+o(1)}{\pi(x_1)}$. Note that
\begin{align*}
    \frac{1}{\pi(x_1)}= \E_{x_1}\left[\tau_{x_1}^{+}\right]\ge \E_{x_1}\left[(\tau_{x_1}^{+}-k)\mathds{1}_{\tau_{x_1}^{+}>k}\right]\ge \E \left[\E_{X_k} \left[\tau_{x_1}\mathds{1}_{X_j\ne x_1 \;\forall 1\le j\le k}\right]\right],
\end{align*}
where the last step is by reversibility. Denoting by $i$ the last time in the walk such that $x_1=X_i$, we indeed obtain
\begin{align*}
    \E \left[\E_{X_k}\left[\tau_{x_1}\right]\right]&=\E \left[\E_{X_k} \left[\tau_{x_1}\mathds{1}_{X_j\ne x_1 \;\forall 1\le j\le k}\right]\right]  +\sum_{i=1}^{k-1}\E\left[\E_{X_k}\left[\tau_{x_1}\mathds{1}_{X_i=x_1,\; X_j\ne x_1 \;\forall j\ge i+1}\right]\right]\\
    &\le \frac{1}{\pi(x_1)}  +\sum_{i=1}^{k-1}\E\left[\E_{X_k}\left[\mathds{1}_{X_i=x_1}\right]\frac{1}{\pi(x_1)}\right]= \frac{1}{\pi(x_1)}+ \frac{1}{\pi(x_1)}\sum_{i=1}^{k-1}  \P_{x_1}(X_i=x_1).
\end{align*}
Starting the walk from the stationary distribution $\pi(x_1) \propto C_{x_1}$, all the $X_1, \ldots, X_k$ have stationary distribution
\begin{align*}
    \E\left[\E _{X_k} [\tau_{X_1}]\right]& \le \sum_{x_1\in V(G_n)}\pi(x_1)\left(\frac{1}{\pi(x_1)}+\frac{1}{\pi(x_1)}\sum_{i=1}^{k-1}\P_{x_1}(X_i=x_1)\right)\\
    &\le |V(G_n)|+\sum_{x_1\in V(G_n)}\sum_{i=1}^{k-1}\left(\P_{x_1}(X_i=x_1, X_{i-1}\in W_n^\tc)+\P(X_{i-1}\notin W_n^\tc)\right)\\
    &\le |V(G_n)|\left(1+(k-1)\frac{\delta_n\gamma_n}{\gamma_n}+(k-1)\pi(V(G_n)\setminus W_n^\tc)\right)=|V(G_n)|(1+O(\delta_n)),
\end{align*}
by the first and third conditions of Assumption \ref{assum:for_edge_overlap} and by (\ref{formula:unif_vs_stationary}).

We use the commute time identity \cite{PTN} Corollary 2.21 to obtain
\begin{align*}
    \E\left[\Reff(X_1\leftrightarrow X_k)\right]=\frac{\E\left[\E_{X_1}[\tau_{X_k}]+\E_{X_k}[\tau_{X_1}]\right]}{{2C_{E(G_n)}}}\le \frac{(1+O(\delta_n))2|V(G_n)|}{2C_{E(G_n)}}.
\end{align*}
\end{proof}

\begin{lemma}[Generalization of Lemma 3.5 of \cite{nachmias2022local}]\label{lemma:from_twovert_to_k+1} Let $(G, \c)$ be an electric network and $\{1, \ldots, k\}$ are distinct vertices of it with $k\ge 3$, and we fix $K>1$. Then, there exist some constants $f:=f(k, K)$ and $g:=g(K, k)$ such that if for any $\gamma>0$, $s_1, \ldots, s_k\in [\gamma, K\gamma]$ and $0<\eps\le \frac{1}{f\gamma}$ we have
\begin{gather}
    \left|\Reff ( i \leftrightarrow j ) -\frac{1}{s_i}-\frac{1}{s_j}\right| \le \eps
\end{gather}
for any distinct $i\ne j\in \{1, \ldots, k\}$, then  
\begin{gather}
    \left | \Reff\left (k \leftrightarrow \{1,\ldots, k-1\}\right) - \frac{1}{s_k}-\frac{1}{\sum_{j=1}^{k-1}s_j}\right| \le g\eps.
\end{gather}   
\end{lemma}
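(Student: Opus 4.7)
The plan is to benchmark $G$ against the \emph{hub network} $G^*$, defined on $\{1, \ldots, k\} \cup \{h\}$ by attaching each vertex $i$ to a new hub $h$ via an edge of conductance $s_i$. A direct computation in $G^*$ (series combination through $h$) gives $\Reff_{G^*}(i \leftrightarrow j) = 1/s_i + 1/s_j$ and, writing $\Sigma := \sum_\ell s_\ell$, the identity $\Reff_{G^*}(k \leftrightarrow \{1, \ldots, k-1\}) = 1/s_k + 1/(\Sigma - s_k)$. Thus the lemma amounts to a stability statement: if the pairwise resistances of $G$ are $\eps$-close to those of $G^*$, then so must be the resistance from $k$ to $\{1, \ldots, k-1\}$.

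First I would reduce to the case that $G$ is a complete graph on the vertex set $\{1, \ldots, k\}$ via Schur-complement elimination of all remaining vertices. This standard reduction preserves all effective resistances among $\{1, \ldots, k\}$ and produces non-negative conductances $c(i, j)$. In the resulting complete graph, shorting $S := \{1, \ldots, k-1\}$ to one point immediately yields $\Reff(k \leftrightarrow S) = 1/C_k$ with $C_k := \sum_{j < k} c(k, j)$. Applying the same Schur reduction to $G^*$ (eliminating $h$) produces the explicit conductances $c^*(i, j) = s_i s_j / \Sigma$, so that $C_k^* = s_k(\Sigma - s_k)/\Sigma$ and $1/C_k^* = 1/s_k + 1/(\Sigma - s_k)$, matching the claimed target. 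The lemma therefore reduces to proving the quantitative bound $|c(i, j) - c^*(i, j)| \le O_{K, k}(\gamma^2 \eps)$, which, using $C_k, C_k^* \in \Theta_{K, k}(\gamma)$, translates to $|1/C_k - 1/C_k^*| \le O_{K, k}(\eps)$, as needed.

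The heart of the argument is the perturbative inversion of the map $c \mapsto R$ near $c^*$. Using the identity $R_{ij} = M_{ii} + M_{jj} - 2 M_{ij}$ with $M := L^+$, together with the normalization $M \mathbf{1} = 0$ and $\text{tr}(M) = \frac{1}{2k} \mathbf{1}^T R \mathbf{1}$, one recovers $M$ and hence $L$ as an explicit linear and then pseudoinverse function of $R$. A short computation shows that the hub Laplacian $L^* = \text{diag}(s) - s s^T / \Sigma$ satisfies $v^T L^* v = \sum_i s_i (v_i - s^T v / \Sigma)^2 \ge \gamma \|v\|^2$ for every $v \perp \mathbf{1}$, while $L^* \preceq \text{diag}(s) \preceq K \gamma I$, so the non-zero eigenvalues of $L^*$ lie in $[\gamma, K \gamma]$ and those of $M^* = (L^*)^+$ in $[1/(K\gamma), 1/\gamma]$. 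A perturbation $R - R^* = \Delta$ with $|\Delta_{ij}| \le \eps$ gives an entrywise perturbation $M - M^* = \delta_M$ of size $O_k(\eps)$, and provided $\eps \le 1/(f \gamma)$ with $f = f(k, K)$ large enough, a standard pseudoinverse perturbation estimate yields $L - L^* \approx -M^+ \delta_M M^+$ and $|L_{ij} - L^*_{ij}| \le O_{K, k}(\gamma^2 \eps)$, giving the conductance bound and hence the lemma. The main obstacle is precisely this quantitative inversion: one must verify, uniformly in $\gamma$ (after the natural $\gamma$-rescaling), that the Jacobian's inverse has norm $O_{K, k}(\gamma^{-2})$, and that the resulting constants $f, g$ depend only on $k$ and $K$ and not on $\gamma$ or the global structure of $G$.
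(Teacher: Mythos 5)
Your proposal is correct and follows the same conceptual blueprint as the paper: (i) reduce to a complete graph on $\{1,\ldots,k\}$ by eliminating all other vertices (the paper does this via the return-walk chain $\mathcal{M}_{\mathrm{ret}}$ and its loop-free version $\widetilde{\mathcal{M}}$, which is exactly the Schur/trace network you describe), (ii) observe that $\Reff(k\leftrightarrow S)$ is the reciprocal of the reduced conductance $\widetilde{C}_k$ out of $k$, (iii) identify the ``ideal'' network as a star/hub with leaf conductances $s_i$, and (iv) invert the resistance map and propagate the $\eps$-perturbation through a matrix-inverse bound (the paper cites the Stewart bound \cite{stewart1969continuity}).

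The one genuine difference is in the linear-algebra implementation. You recover the full Laplacian $L$ from $R$ via the pseudoinverse $M=L^+$, the trace identity $\mathrm{tr}(M)=\tfrac{1}{2k}\mathbf 1^{\mathsf T} R\mathbf 1$, and a pseudoinverse perturbation estimate on the rank-$(k-1)$ matrix. The paper instead works with the Green function $g_1(i,j)=\tfrac12\bigl(\Reff(1\leftrightarrow i)+\Reff(1\leftrightarrow j)-\Reff(i\leftrightarrow j)\bigr)$ and the grounded Laplacian $\Delta[1]=g_1^{-1}$, which is an ordinary $(k-1)\times(k-1)$ invertible matrix. This is slightly cleaner on two counts: $g_1$ has an explicit closed form in the pairwise resistances (so no diagonal-recovery step is needed), and the perturbation is a plain matrix inverse rather than a fixed-rank pseudoinverse, so no care is needed about preservation of rank/null space under the perturbation $\delta_M$. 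Your pseudoinverse route does work, precisely because both $L$ and $L^*$ have null space $\mathrm{span}(\mathbf 1)$ and the ambient perturbation preserves this structure, but this is a point you should verify explicitly; as written, ``a standard pseudoinverse perturbation estimate yields $L-L^*\approx -M^+\delta_M M^+$'' elides the fact that the generic perturbation formula for $A^+$ contains additional projector terms that vanish only because range and kernel are fixed here. Both routes give constants $f,g$ depending only on $k,K$ after the $\gamma$-rescaling, so the result is the same.
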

\begin{proof} We do the same steps as in Lemma 3.5 of \cite{nachmias2022local}, but for more general values.

We write $\mathcal{M}$ for the original random walk on $(V(G), \c)$, and we consider two Markov chains $\mathcal{M}_{\mathrm{ret}}$ and $\widetilde{\mathcal{M}}$ on $\{1, \ldots, k\}$ with transition probabilities $p_{\text{ret}}(i, j)$ and $\widetilde{p}(i, j)$ the following way: we define $p_{\text{ret}}(i,j)$ as the probability that the random walk $\mathcal{M}$ started at $i$ returns to $j$ when it first returns to $\{1, \ldots, k\}$ and $\widetilde{M}$ is the Markov chain corresponding to the electric network obtained from $\mathcal{M}_{\text{ret}}$ by removing the loops. 

Note that the reversible measure of $\mathcal{M}$ restricted to $\{1, \ldots, k\}$ is a reversible measure of $\mathcal{M}_{\text{ret}}$, since, by looking at the random walk long term, we get $C_ip_{\text{ret}}(i,j)=C_jp_{\text{ret}}(j,i)$. Hence, we write $c_{\text{ret}}(i,j):=p_{\text{ret}}(i,j)C_i$ for the conductances of this network. By the definition of $\widetilde{M}$, we have $\widetilde{c}(i,j)=c_{\text{ret}}(i,j)$ if $i\ne j$ and $\widetilde{c}(i,i)=0$, and $\widetilde{C}_i=\sum_{j=1}^k\widetilde{c}(i, j)=\sum_{j\ne i}c_{\text{ret}}(i, j)$ is a reversible measure for $\widetilde{M}$.

It is easy to check even directly from the definition (\ref{formula:def_Reff}) that the pairwise effective resistances remain the same in the electric networks $\mathcal{M}$, $\mathcal{M}_{\mathrm{ret}}$ and $\widetilde{\mathcal{M}}$. Moreover, we have that $\P_k(\tau_{\{1, \ldots, k-1\}}<\tau^+_k)=(C_k-c_{\text{ret}}(k, k))/C_k=\widetilde{C}_k/C_k$, so we can write
\begin{align*}
    \widetilde{\mathcal{R}}_{\mathrm{eff}}(k\leftrightarrow \{1, \ldots, k-1\})&=\Reff(k\leftrightarrow \{1, \ldots, k-1\})=\frac{1}{C_1\P_k(\tau_{\{1, \ldots, k-1\}}<\tau^+_k)}=\frac{C_k}{C_k\widetilde{C}_k}=\frac{1}{\widetilde{C}_k}.
\end{align*}

We write $\Delta(\cdot, \cdot)$ for the normalized network Laplacian of $\widetilde{M}$, i.e., $\Delta(i, i) = \widetilde{C}_i$ and $\Delta(i, j) = -\widetilde{c}(i,j)$ 
and $\Delta[a]$ for deletion the rows and column corresponding to $a$. We consider $g_a(i, j):=G_a(i, j)/\widetilde{C}(j)$, where $G_a(i, j)$ denotes the expected number of visits of a random walk started at $i$ to $j$ before it hits $a$. By Exercise 2.62(a) of \cite{PTN}, we have that $\Delta[a]=g_a^{-1}$. It is also useful to define the $(k-1)\times (k-1)$ matrices $A$ and $B$ as
\begin{align*}
A(i,j):=\begin{cases}
        \frac{1}{s_1}\text{ if }i\ne j \in\{2, \ldots ,k\} \\
        \frac{1}{s_1}+\frac{1}{s_j}\text{ if }i=j\in\{2, \ldots ,k\},
    \end{cases}
\end{align*}
and 
\begin{align*}
    B(i,j):=\begin{cases}
        -\frac{s_{i+1}s_{j+1}}{S}\text{ if }i\ne j \\
        s_{i+1}\frac{S-s_{i+1}}{S}\text{ if }i=j,
    \end{cases}
\end{align*}
where $S:=\sum_{\ell=1}^ks_\ell$. The aim of this proof is to check that $A^{-1}=B$ and $A$ and $g_1$ are close, hence $g^{-1}_1\approx B$ are also close which will give us
$$\Reff(k\leftrightarrow \{1, \ldots, k-1\})=\frac{1}{\widetilde{C}_k}=\frac{1}{\Delta[1](k-1, k-1)}=\frac{1}{g^{-1}_1(k-1, k-1)}$$
is close to $1/B(k-1, k-1)=\frac{S}{(S-s_{k})s_{k}}=\frac{1}{s_{k}}+\frac{1}{s_1+\ldots+s_{k-1}}$.

By Exercise 2.68 of \cite{PTN}, we have
\begin{align*}
    g_1(i,j)=\frac{\Reff(1\leftrightarrow j)+\Reff(1\leftrightarrow i)-\Reff(i\leftrightarrow j)}{2},
\end{align*}
so for
we have that
so $|g_1(i,j)-A(i,j)|\le \frac{3}{2}\eps$ any $i,j\in \{2, \ldots, k\}$.

It is an easy linear algebra exercise to check that $AB$ is indeed the identity matrix. Instead of doing this, we give a probabilistic reasoning. We write $\mathcal{N}_{\textrm{ret}}$ and $\widetilde{\mathcal{N}}$ for the Markov chains on $\{1, \ldots, k\}$, corresponding to the electric networks of conductances $c_{\mathcal{N}_{\mathrm{ret}}}(i, j)=c_{\widetilde{\mathcal{N}}}(i, j):=\frac{s_{i}s_{j}}{S}$ for $i\ne j$, $c_{\mathcal{N}_{\mathrm{ret}}}(i, i):=\frac{s_{i}^2}{S}$ and $c_{\widetilde{N}}(i, i):=0$ where $S=\sum_{i=1}^ks_i$ as before. Then, it is clear that $B=\Delta_{\mathcal{\widetilde{N}}}[1]$. On the other hand, since the transition probabilities of $\mathcal{N}_{\mathrm{ret}}$ are  $p_{\mathcal{N}_{\mathrm{ret}}}(i, j)=\frac{s_{i}s_{j}}{S}\frac{S}{s_iS}=\frac{s_j}{S}$, the Green function of $\mathcal{N}_{\mathrm{ret}}$ is simply $G_{1, \mathcal{N}_{\mathrm{ret}}}(i, j)=\frac{s_j}{s_1}$ for $i\ne j$ and $G_{1, \mathcal{N}_{\mathrm{ret}}}(i, i)=\frac{s_i}{s_1}+1$. It is also clear that $G_{1, \mathcal{N}_{\mathrm{ret}}}(i, j)=\frac{1}{1-s_j/S}G_{1, \widetilde{\mathcal{N}}}(i, j)$ for any $i,j\in \{1, \ldots, k\}$, and then $$g_{1, \widetilde{\mathcal{N}}}(i, j)=\frac{G_{1, \mathcal{N}_{\mathrm{ret}}}(i, j)\left(1-\frac{s_j}{S}\right)}{\sum_{j\ne i}\frac{s_{i}s_{j}}{S}}=\frac{\left(\frac{s_j}{s_1}+\mathds{1}[i=j]\right)\left(1-\frac{s_j}{S}\right)}{s_j(S-s_j)\frac{1}{S}}=\frac{1}{s_1}+\mathds{1}[i=j]\frac{1}{s_j}=B(i,j).$$
Since $\Delta_{\widetilde{\mathcal{N}}}[1]=g_{1, \widetilde{\mathcal{N}}}^{-1}$, we obtained that $B=A^{-1}$.

We denote by $\|\cdot\|_1$ the maximum of the $\ell_1$ norms of the rows of a matrix.  For $E:=g_a-A$, we have $\|E\|_1<(k-1)\frac{3}{2}\eps<2k\eps$ and $\|A^{-1}\|_1=\|B\|_1\le K\gamma (1+\sum_{j=1}^{k-1}\frac{s_{j+1}}{S})<2K\gamma$, therefore $\|A^{-1}\|_1\|E\|_1\le 4kK\gamma\eps\le \frac{1}{4K}< 1/2<1$ for $\eps\le \frac{1}{32kK(K+1)\gamma}$, so we obtain by \cite{stewart1969continuity} that
$$\|\Delta[1]-B\|_1=\|(A+E)^{-1}-A^{-1}\|_1\le \frac{\|A^{-1}\|_1^2\|E\|_1}{1-\|A^{-1}\|_1\|E\|_1}\le \frac{(2K\gamma)^22k\eps}{1-1/2}=16kK(K+1)\gamma^2\eps,$$
so we obtained that $|\widetilde{C}_k-s_{i+1}\frac{S-s_{i+1}}{S}|\le 16kK^2\gamma^2\eps\le \frac{\gamma K}{2(K+1)}$. Since $s_{i+1}\frac{S-s_{i+1}}{S}\ge \gamma\frac{K}{K+1}$, we get $\widetilde{C}_k\ge \frac{K}{K+1}\gamma-\frac{K}{2(K+1)}\gamma=\frac{K}{2(K+1)}\gamma$, therefore, by $\left|\frac{1}{a}-\frac{1}{b}\right|=\frac{|b-a|}{ab}$,
$$\left|\frac{1}{\widetilde{C_k}}-\frac{S}{s_{i+1}(S-s_{i+1})}\right|\le \frac{2(K+1)}{\gamma K}\frac{K+1}{\gamma K}16kK^2\gamma^2\eps=32k(K+1)^2\eps=g\eps$$
for $g:=32k(K+1)^2$ and $f:=32kK(K+1)$.
\end{proof}

\begin{defi} \label{def:X_k}
    For a rooted tree $(T, 1)$, and electric network $\c$, we define $\mathbf{X}_k:=\{X_1, \ldots, X_k\}\in V(G_n)^k$, a random $T$-tuple w.r.t. $\c$, the following way: we choose $X_1\in V(G_n)$ w.r.t.~the invariant measure of the electric network, and inductively, $X_{j}$ is a random neighbor of $X_{p(j)}$ of distribution proportional to the conductances, where $p(j)$ is the parent of $j$ in $T$ likewise in Notation \ref{notation:v_k}.
\end{defi}

\begin{proof}[Proof of Lemma \ref{lemma:F_on_W^tc}] We generate $(X_1, \ldots, X_k)$ likewise in Definition \ref{def:X_k}, so $X_j$ is stationary for any $1\le j\le k$. Since $|V(G_n)\setminus U_n|=O(f_n)|V(G_n)|$, we have  $\pi_n(V(G_n)\setminus U_n)=O(\max(f_n, \delta_n))$ by (\ref{formula:unif_vs_stationary}), resulting in 
    \begin{align*}
        \sum_{\mathbf{v}_k\in \C(W_n^\tc)\setminus \C(U_n)}\frac{C_{v_1}}{C_{V(G_n)}}\prod_{j=2}^k \frac{c(v_{p_T(j)}, v_j)}{C_{v_{p(j)}}}&=\P(\mathbf{X}_k\in \C(W_n^\tc)\setminus \C(U_n))\\
        &\le \sum_{j=1}^k \P(X_j \notin U_n)=O(\max(f_n, \delta_n)).
    \end{align*}
On the other hand, for any $\mathbf{v}_k\in \C(W_n^\tc)$, it is clear that $F_{\c_n}(T(\mathbf{v}_k))=O(1)\frac{1}{|V(G_n)|}\frac{1}{\gamma_n^{k-1}}$ and $\frac{C_{v_1}}{C_{V(G_n)}}\prod_{j=2}^k \frac{1}{C_{v_{p(j)}}}=\Theta(1)\frac{1}{|V(G_n)|}\frac{1}{\gamma_n^{k-1}}$, hence
    \begin{align*}
        \sum_{\mathbf{v}_k\in \C(W_n^\tc)\setminus \C(U_n)}F_{\c_n}(T(\mathbf{v}_k))\prod_{j=2}^k c(v_{p_T(j)}, v_j)&=O(1) \sum_{\mathbf{v}_k\in \C(W_n^\tc)\setminus \C(U_n)}\frac{C_{v_1}}{C_{V(G_n)}}\prod_{j=2}^k \frac{c(v_{p_T(j)}, v_j)}{C_{v_{p(j)}}}\\
        &=O(\max(f_n, \delta_n)).
    \end{align*}
The LHS of (\ref{formula:F_on_W^tc_second}) equals to $\P(\mathbf{X}_k\in \C(W_n^\tc))\ge 1-k\P(\mathbf{X}_k\in \C(W_n^\tc)=1+O(\delta_n)$.
\end{proof}

\begin{lemma}[Generalization of Theorem 3.4 of \cite{nachmias2022local}]\label{lemma:Reff_tree_regular}
    We consider an electric network satisfying Assumption \ref{assum:for_local_lim} and let $\mathbf{X}_k=\{X_1,\ldots,X_k\}$ be a random $k$-tuple drawn as described above. Then, with probability $1-O(\delta_n^{1/2})$, $\mathbf{X}_k$ is $T$-compatible,
\begin{gather}\label{formula:R_eff_k}
    \left| \Reff \left(X_{\ell} \leftrightarrow \{X_1, \ldots, X_{\ell-1}\} \right) - \frac{1}{C_{X_{\ell}}}\frac{\sum_{j=1}^{\ell}C_{X_j}}{\sum_{j=1}^{\ell-1}C_{X_j}}\right| \le \frac{O(\delta_n^{1/2})}{\gamma_n}.
\end{gather}
and
    \begin{gather*}
        \P\left(T_j(\mathbf{X}_k)\subseteq B_{\wst^{\beta_n}(G_n)}(X_1, r)\Given \mathbf{X}_k\right)=(1+O(\delta_n^{1/2}))\frac{\sum_{j=1}^{k}C_{X_j}}{\prod_{j=1}^{k}C_{X_j}}\prod_{j=2}^{k}c(X_j, X_{p(j)}).
    \end{gather*}
\end{lemma}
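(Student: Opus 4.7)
The plan is to establish three things in order: first, $T$-compatibility of $\mathbf{X}_k$ together with sharp pairwise effective resistance estimates $\Reff(X_i \leftrightarrow X_j) \approx 1/C_{X_i} + 1/C_{X_j}$; second, the one-to-many estimate \eqref{formula:R_eff_k} via Lemma \ref{lemma:from_twovert_to_k+1}; and third, the probability formula via iteration of the spatial Markov property along the BFS ordering of $T$.

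The key observation driving the first step is a reversibility argument: for any $i \ne j$, reading $\mathbf{X}_k$ along the path in $T$ from $i$ through the least common ancestor $m$ down to $j$ gives a sequence distributed as a random walk of length $d_T(i, j) \le 2r$ started from a stationary vertex. Indeed, $X_m$ is stationary (being the endpoint of a random walk of length $d_T(1, m)$ from the stationary $X_1$); given $X_m$, the downward walks $(X_m, \ldots, X_i)$ and $(X_m, \ldots, X_j)$ are conditionally independent random walks from $X_m$; and by reversibility the first can be read backwards as a walk from a stationary $X_i$ ending at $X_m$, which concatenates with the second to yield the claimed walk. Applying Lemma \ref{lemma:Reff_walk_regular} with $\epsilon = \sqrt{\delta_n}/\gamma_n$ and union-bounding over the $\binom{k}{2}$ pairs gives $\Reff(X_i \leftrightarrow X_j) \le 1/C_{X_i} + 1/C_{X_j} + \sqrt{\delta_n}/\gamma_n$ for all $i \ne j$ with probability $1 - O(\sqrt{\delta_n})$. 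The matching lower bound when both $X_i, X_j \in W_n^\tc$ follows from Nash-Williams applied to the (disjoint) vertex cutsets around $X_i$ and $X_j$ if they are nonadjacent, and from Lemma \ref{lemma_main:Reff_edge} a) otherwise. For compatibility, the same walk representation together with the bound $\sum_{s=1}^{2r-1} \P_{\pi}(X_s = X_0) = O(\delta_n)$ established inside the proof of Lemma \ref{lemma:Reff_walk_regular} yields $\P(X_i = X_j) = O(\delta_n)$ per pair, and $X_j \in W_n^\tc$ for all $j$ holds with probability $1 - O(\delta_n)$ by stationarity of each marginal and \eqref{formula:unif_vs_stationary}.

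On this good event, Lemma \ref{lemma:from_twovert_to_k+1} applied with $s_\ell := C_{X_\ell} \in [\gamma_n, K\gamma_n]$ and $\epsilon = O(\sqrt{\delta_n}/\gamma_n)$ (which is $\le 1/(f\gamma_n)$ once $n$ is large) yields, separately for each $\ell = 2, \ldots, k$,
$$\Reff(X_\ell \leftrightarrow \{X_1, \ldots, X_{\ell-1}\}) \;=\; \frac{1}{C_{X_\ell}} + \frac{1}{\sum_{j=1}^{\ell-1} C_{X_j}} + O\!\left(\frac{\sqrt{\delta_n}}{\gamma_n}\right) \;=\; (1 + O(\sqrt{\delta_n})) \cdot \frac{1}{C_{X_\ell}} \cdot \frac{\sum_{j=1}^{\ell} C_{X_j}}{\sum_{j=1}^{\ell-1} C_{X_j}},$$
since the leading term is at least $1/(K\gamma_n)$; this is exactly \eqref{formula:R_eff_k}.

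For the probability formula, I iterate the spatial Markov property. Conditioning on $\mathbf{X}_k$ and on the event that the first $j - 2$ BFS edges lie in $\wst(\c_n)$, contracting these edges merges $\{X_1, \ldots, X_{j-1}\}$ into one vertex while leaving $\Reff(X_j \leftrightarrow \{X_1, \ldots, X_{j-1}\})$ unchanged (only edges internal to the merged set get contracted, the remaining ones becoming harmless self-loops); Kirchhoff's formula then gives the conditional probability $c(X_{p(j)}, X_j) \cdot \Reff(X_j \leftrightarrow \{X_1, \ldots, X_{j-1}\})$ for the next BFS edge. Multiplying over $j = 2, \ldots, k$, substituting the estimate above, and telescoping $\prod_{j=2}^k \bigl(\sum_{m=1}^{j} C_{X_m}\bigr) / \bigl(\sum_{m=1}^{j-1} C_{X_m}\bigr) = \bigl(\sum_{m=1}^k C_{X_m}\bigr) / C_{X_1}$ yields the claimed formula, with total error $(1 + O(\sqrt{\delta_n}))^{k-1} = 1 + O(\sqrt{\delta_n})$ for fixed $k$; on the compatibility event $\{T(\mathbf{X}_k) \subseteq \wst(\c_n)\}$ agrees with $\{T(\mathbf{X}_k) \subseteq B_{\wst(\c_n)}(X_1, r)\}$ since $T$ has height at most $r$. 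The main obstacle is the reversibility argument of the first step: it is what lets us treat the tree-path from $X_i$ to $X_j$ as an honest random walk from stationarity and hence apply Lemma \ref{lemma:Reff_walk_regular} uniformly in $(i, j)$; without it one would lose control of the errors in the pairwise estimates, and these would accumulate badly when feeding them into Lemma \ref{lemma:from_twovert_to_k+1}.
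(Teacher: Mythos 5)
Your proof is correct and follows the same route as the paper's: apply Lemma~\ref{lemma:Reff_walk_regular} and a union bound to get the pairwise effective-resistance estimates, feed them into Lemma~\ref{lemma:from_twovert_to_k+1} to obtain \eqref{formula:R_eff_k}, and then iterate the spatial Markov property with Kirchhoff's formula. However, you make explicit three points the paper's terse proof leaves implicit: the reversibility argument showing that the tree-path from $X_i$ to $X_j$ is a stationary-start random walk (which is precisely what makes Lemma~\ref{lemma:Reff_walk_regular} applicable to each pair), the matching lower bound on $\Reff(X_i \leftrightarrow X_j)$ via Lemma~\ref{lemma_main:Reff_edge}~a) (needed because Lemma~\ref{lemma:from_twovert_to_k+1} requires a two-sided estimate, but the paper only writes out the one-sided bound), and the telescoping product that yields the final displayed probability formula, which the paper's proof does not derive at all. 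Your version is thus more complete than the one in the paper.
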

\begin{proof} We start by checking that $X_1, \ldots, X_k$ are typically distinct. Note that $X_i$ is stationary distributed, so the union bound gives us that
\begin{align}\label{formula:prob_notdistinct}
        \P(\exists i\ne j: \; X_i=X_j)\le \sum_{i=1}^k\P(X_i\notin W_n^\tc)+k^2\frac{\delta_n\gamma_n}{\gamma_n}= O(\delta_n),
\end{align}
therefore $\mathbf{X}_k$ is $T$-compatible with probability $1-O(\delta_n)$. Moreover, by Lemma \ref{lemma:Reff_walk_regular} with $\eps=\sqrt{\delta_n}/\gamma_n$ and the union bound
\begin{align*}
    &\P \left( \exists i\ne j \le k: \; \Reff(X_i \leftrightarrow X_j) \ge \frac{1}{C_{X_i}}+\frac{1}{C_{X_j}}+\frac{ \sqrt{\delta_n}}{\gamma_n} \right )\\
    &\hspace{7cm}\le \sum_{i, j=1}^{k} \left(\Reff(X_i \leftrightarrow X_j) \ge \frac{1}{C_{X_i}}+\frac{1}{C_{X_j}}+\frac{ \sqrt{\delta_n}}{\gamma_n} \right )\\
    &\hspace{7cm}=O(\sqrt{\delta_n}),
\end{align*}
hence the conditions of Lemma \ref{lemma:from_twovert_to_k+1} follow with probability $1-O(\sqrt{\delta_n})$.
\end{proof}

The heuristics behind the proof of Lemma 
\ref{lemma_main:Reff_tree_bal}: for typical $\mathbf{v}_k$'s, we have
\begin{align*}
        \P\left(T(\mathbf{v}_k)\subseteq B_{\wst^{\beta_n}(G_n)}(v_1, r)\right)&= \prod_{i=2}^k \Reff \left(v_i \leftrightarrow \{v_1, \ldots, v_{i-1}\} \right)c(v_{p_T(i)}, v_i)\\
        &\sim\prod_{i=2}^k \left(\frac{1}{C_{v_i}}\frac{\sum_{j=1}^{i}C_{v_j}}{\sum_{j=1}^{i-1}C_{v_j}}c(v_{p_T(i)}, v_i)\right)= \frac{\sum_{j=1}^{k}C_{v_j}}{\prod_{i=1}^kC_{v_j}}\prod_{i=2}^kc(v_{p_T(i)}, v_i).
\end{align*}

By inclusion-exclusion formula, we will check that
$$\P\left(B_{\wst^{\beta_n}(G_n)}(v_1, r)=T(\mathbf{v}_k)\right)\sim e^{-b(\mathbf{v}_k)}\frac{\sum_{j=t+1}^{k}C_{v_j}}{\prod_{i=1}^kC_{v_j}}\prod_{i=2}^kc(v_{p_T(i)}, v_i).$$

In the proof we are making more precise what typical means.

\begin{proof}[Proof of Lemma 
\ref{lemma_main:Reff_tree_bal}] We start by  introducing the notation for this proof. We abbreviate $C(\mathbf{X}_\ell):=\sum_{j=1}^{\ell}C_{X_j}$ and  $b(\mathbf{X}_\ell):=\sum_{j=1}^{\ell}b(X_j)$ and $H_n(\mathbf{X}_k):=H_n:=\{(u, v): v\in \mathbf{X}_t\text{ and } u\in V(G_n)\setminus\mathbf{X}_k\}$. We write
$\P_{\mathbf{X}_k}(\cdot):=\P(\cdot\given \mathbf{X}_k)$. In this proof, we often consider $\mathbf{e}_i=(e_1, \ldots, e_i)\in H_n^i$ for which we use the notation $e_j=(X_{l(j)}, u_j)$, i.e., the endpoint of $e_j$ inside $V(T(\mathbf{X}_k))$ is denoted by $X_{l(j)}$ and the outer endpoint by $u_j$. We write 
\begin{align*}
    P_n^{\text{(eq)}}(\mathbf{X}_k)&:=\P_{\mathbf{X}_k}\Big(T(\mathbf{X}_k)=B_{\wst^{\beta_n}(G_n)}(X_1, r)\Big),\\
    P_n^{\text{(sub)}}(\mathbf{X}_k)&:=\P_{\mathbf{X}_k}\Big(T(\mathbf{X}_k)\subseteq B_{\wst^{\beta_n}(G_n)}(X_1, r)\Big),\\
    q_n(T(\mathbf{X}_k))&:=\prod_{j=2}^{k} \Reff \left(\left\{X_1, \ldots, X_{j-1}\right\} \leftrightarrow X_j \right)c(X_j, X_{p(j)})\\
    q_n(\mathbf{e}_i\given T(\mathbf{X}_k))&:=\prod_{j=1}^{i} \Reff \left(V\left(T(\mathbf{X}_k)\cap \bigcup_{\ell=1}^{j-1}e_{\ell}\right) \leftrightarrow u_j \right)c(e_j)\\
    Q_n(i\given T(\mathbf{X}_k))&:=\sum_{\mathbf{e}_i\in H_n^i}q_n(\mathbf{e}_i\given T(\mathbf{X}_k))\\
    s_n(T(\mathbf{X}_k))&:=\prod_{j=2}^{k} \frac{C(\mathbf{X}_j)}{C(\mathbf{X}_{j-1})}\frac{1}{C_{X_j}}c(X_j, X_{p(j)})=\frac{C(\mathbf{X}_k)}{\prod_{j=1}^{k}C_{X_j}}\prod_{e\in T(\mathbf{X}_k)} c(e),\\
    s_n(\mathbf{e}_i\given T(\mathbf{X}_k))&:=\prod_{j=1}^{i} \frac{C(\mathbf{X}_k)+C(\mathbf{u}_j)}{C(\mathbf{X}_k)+C(\mathbf{u}_{j-1})}\frac{1}{C_{u_j}}c(e_j)=\frac{C(\mathbf{X}_k)+C(\mathbf{u}_i)}{C(\mathbf{X}_k)\prod_{j=1}^{i}C_{u_j}}\prod_{j=1}^ic(e_j),\\
    S_n(i\given T(\mathbf{X}_k))&:=\sum_{\mathbf{e}_i\in H_n^i}s_n(\mathbf{e}_i\given T(\mathbf{X}_k))
\end{align*}
for any $i\ge 1$, and our convention is that $Q_n(0\given \mathbf{X}_k):=1$ and $S_n(0\given \mathbf{X}_k):=1$. 

The reader might find an informal list of connections between the quantities defined above useful. We have $P_n^{\text{(sub)}}(\mathbf{X}_k)=q_n(T(\mathbf{X}_k))$ by Kirchhoff's formula and the spatial Markov property, and one expects that typically $q_n(T(\mathbf{X}_k))\sim s_n(T(\mathbf{X}_k))$ and $q_n(\mathbf{e}_i\given T(\mathbf{X}_k))\sim s_n(\mathbf{e}_i\given T(\mathbf{X}_k))$ by applying Lemma \ref{lemma:Reff_tree_regular} several times, and after summing, $Q_n(i\given T(\mathbf{X}_k))\sim S_n(i\given T(\mathbf{X}_k))$. We are going to make these observations precise later in this proof.

Combining the inclusion--exclusion formula and the spatial Markov property,
\begin{equation}\label{formula:incl-excl}
\begin{aligned}
    P_n^{\text{(eq)}}(\mathbf{X}_k)&=\P_{\mathbf{X}_k}\Big(T(\mathbf{X}_k)\subseteq B_{\wst^{\beta_n}(G_n)}(X_1, r)\Big)-\P_{\mathbf{X}_k}\Big(T(\mathbf{X}_k)\subsetneq B_{\wst^{\beta_n}(G_n)}(X_1, r)\Big)
    \\&=P_n^{\text{(sub)}}(\mathbf{X}_k)-\P_{\mathbf{X}_k}\left(\bigcup_{e\in H_n}\left\{T(\mathbf{X}_k)\cup\{e\}\subseteq B_{\wst^{\beta_n}(G_n)}(X_1, r)\right\}\right)\\
    &=P_n^{\text{(sub)}}(\mathbf{X}_k)\sum_{i=0}^{\infty}(-1)^i\sum_{E\subseteq H_n:\; |E|=i}\frac{\P_{\mathbf{X}_k}\left(T(\mathbf{X}_k)\cup E\subseteq B_{\wst^{\beta_n}(G_n)}(X_1, r)\right)}{P_n^{\text{(sub)}}(\mathbf{X}_k)}\\
    &=P_n^{\text{(sub)}}(\mathbf{X}_k)\sum_{i=0}^{\infty}\frac{(-1)^i}{i!}Q_n(i\given T(\mathbf{X}_k)),
\end{aligned}
\end{equation}
by the spatial Markov property since every term in (\ref{formula:incl-excl}) where the $\frac{1}{i!}$ comes in since the summation runs over subsets in the third line and while, in the fourth line, we are summing over ordered $i$-tuples. For the last equality, we also used that $q_n(\mathbf{e}_i\given T(\mathbf{X}_k))=0$ for $T(\mathbf{X}_k)\cup \{e_1, \ldots, e_i\}$ is not a tree, hence we do not have to be careful with these terms.  

\emph{Our aim} is to give a closed form of $P_n^{\text{(eq)}}(\mathbf{X}_k)$ with high probability. For any $\eps>0$, we would like to define some $i_0:=i_0(\eps)\in \mathbb{N}$ and $\mathcal{A}'_n:=\mathcal{A}'_n(i_0)\subseteq \CN$ with $\P(\mathbf{X}_k\in \mathcal{A}'_n(i_{0}(\eps_n))=1-o(1)$ for some $\eps_n\rightarrow 0$ such that the summands of the last line of (\ref{formula:incl-excl}) can be bounded as 
\begin{equation}\label{formula:Q_and_S}
\begin{aligned}
    &P_n^{\text{(sub)}}(\mathbf{X}_k)Q_n(i\given T(\mathbf{X}_k))=(1+O_{i_0}(\delta_n^{1/2}))s_n(T(\mathbf{X}_k))S_n(i\given T(\mathbf{X}_k))
\end{aligned}
\end{equation}
for any $\mathbf{X}_k\in \mathcal{A}_n'$ and for every $i\le i_0$, and
\begin{align}
    &S_n(i\given T(\mathbf{X}_k))=(1+O_{i_0}(\delta_n^{1/2}))\left(b(\mathbf{X}_t)^i+i\frac{C(\mathbf{X}_t)}{C(\mathbf{X}_k)}b(\mathbf{X}_t)^{i-1}\right)\label{formula:S_and_b}
\end{align}
for any $\mathbf{X}_k\in \CN$ and for every $i\le i_0$, and
\begin{align}
    &\sum_{i=i_0}^{\infty} \frac{1}{i!}\left(Q_n(i\given T(\mathbf{X}_k))+b(\mathbf{X}_t)^i+i\frac{C(\mathbf{X}_t)}{C(\mathbf{X}_k)}b(\mathbf{X}_t)^{i-1}\right)\le \frac{\eps}{4} \label{formula:sum_is_small}
\end{align}
for any $\mathbf{X}_k\in \CN$. These are useful as (\ref{formula:incl-excl}) and (\ref{formula:Q_and_S}, \ref{formula:S_and_b}, \ref{formula:sum_is_small}) imply that  $P_n^{\text{(eq)}}(\mathbf{X}_k)$ is close to
$s_n(T(\mathbf{X}_k)\sum_{i}\frac{(-1)^i}{i!}\left(b(\mathbf{X}_t)^i+i\frac{C(\mathbf{X}_t)}{C(\mathbf{X}_k)}b(\mathbf{X}_t)^{i-1}\right)$ which can be expressed easily in closed form. 

\emph{Now, we are defining $i_0$, and we are showing (\ref{formula:sum_is_small}).} Similarly to how we checked (\ref{formula:main_ineq_third_line}), we have $\Reff\left(V(T(\mathbf{X}_k)\cup \cup_{\ell=1}^{j-1}\{e_{\ell}\}) \leftrightarrow u_j \right)\le \Reff\left(X_{l(j)} \leftrightarrow e_j \right)$ by monotonicity of the effective resistances where $e_j=(X_{l(j)}, u_j)$, therefore for any $\{X_1, \ldots, X_k\}\subseteq \tW_n$,
\begin{align*}
    Q_n(i\given T(\mathbf{X}_k))&\le \sum_{(e_1, \ldots, e_i)\in H_n^i}\prod_{j=1}^{i} \Reff\left(X_{l(j)} \leftrightarrow e_j \right)c(e_j)\\
    &=\left(\sum_{(u, X_j)\in H_n}\Reff \left(X_j \leftrightarrow u \right)c(X_j, u)\right)^i\le \left(\frac{4}{\gamma_n}\gamma_nKt+t\sqrt{\delta_n}\right)^i.
\end{align*}
Hence,
\begin{align}\label{formula:finite_sum_1}
    \sum_{i=0}^{\infty} \frac{1}{i!}Q_n(i\given T(\mathbf{X}_k))\le \sum_{i=0}^{\infty} \frac{(4Kt+t\sqrt{\delta_n})^i}{i!}=e^{4Kt+t\sqrt{\delta_n}}<\infty
\end{align}
uniformly for every nice $\mathbf{X}_k$.

Moreover, by (\ref{formula:b(v)_typ}) we have $b(v)\in (1/K, K+O(\delta_n))=\Theta(1)$ for any $v\in \tW_n$, therefore
\begin{align*}
b(\mathbf{X}_t)^i+i\frac{C(\mathbf{X}_t)}{C(\mathbf{X}_k)}b(\mathbf{X}_t)^{i-1}\le b(\mathbf{X}_t)^i+\frac{itK\gamma_n}{k\gamma_n}b(\mathbf{X}_t)^{i-1}\le (tK+tO(\delta_n))^i+\frac{itK}{k}(tK+tO(\delta_n))^{i-1},
\end{align*}
and then clearly, 
uniformly for every nice $\mathbf{X}_k$, we have
\begin{gather}\label{formula:finite_sum_2}
\sum_{i=0}^{\infty}\left(b(\mathbf{X}_t)^i+i\frac{C(\mathbf{X}_t)}{C(\mathbf{X}_k)}b(\mathbf{X}_t)^{i-1}\right)<\infty.
\end{gather}
Then, we choose $i_0$ such that, for every nice $\mathbf{X}_k$, we have
\begin{align*}
    \sum_{i=i_0}^{\infty} \frac{1}{i!}\left(Q_n(i\given T(\mathbf{X}_k))+b(\mathbf{X}_t)^i+i\frac{C(\mathbf{X}_t)}{C(\mathbf{X}_k)}b(\mathbf{X}_t)^{i-1}\right) \le \frac{\eps}{4}.
\end{align*}

\emph{Now, we are proving (\ref{formula:Q_and_S}).} Inspired by the previous computations,
\begin{align*}
    &\sum_{\substack{\mathbf{e}_i\in H_n^i:\\
    \exists j:\;u_j\notin W_n^\tc\text{ or }e_j\notin \tE_n(4/\gamma_n)}} q_n(\mathbf{e}_i\given T(\mathbf{X}_k))\le 2^i \sum_{\substack{\mathbf{e}_i\in H_n^i:\\
    \;u_1\notin W_n^\tc\text{ or }e_1\notin \tE_n(4/\gamma_n)}} q_n(\mathbf{e}_i\given T(\mathbf{X}_k))
    \\
    &\le 2^iQ_n(i-1\given T(\mathbf{X}_k))\sum_{\substack{e_1\in H_n:\\
    u_1\notin W_n^\tc\text{ or }e_1\notin \tE_n(4/\gamma_n)}}\Reff \left(X_j \leftrightarrow u \right)c(X_j, u)\\
    &\le \delta_n^{1/2}2^it Q_n(i\given T(\mathbf{X}_k))=o(1)Q_n(i\given T(\mathbf{X}_k))
\end{align*}
and analogously
\begin{align*}
    &\sum_{\substack{e_j\in H_n:~u_j\notin W_n^\tc\text{ or}\\e_j\notin \tE_n(4/\gamma_n)}} s_n(\mathbf{e}_i\given T(\mathbf{X}_k))\le o(1)S_n(i\given T(\mathbf{X}_k)).
\end{align*}

We write $\mathcal{T}_T(i)$ for the set of rooted trees obtained from $(T, 1)$ by connecting $i$ leaves to $V(B_{T}(1, r-1))=\{1, \ldots, t\}$. For $T'\in \mathcal{T}_T(i)$ and a $T$-tuple $\mathbf{X}_k$, we can consider a $T'$-tuple $(\mathbf{X}_k, \mathbf{e}_i)$, and we write that
$\mathbf{X}_{k+i}:=V((\mathbf{X}_k, \mathbf{e}_i))$ in the sense that $X_{k+\ell}$ is the endpoint of $e_{\ell}$ outside of $V(\mathbf{X}_k)$. We consider 
$$\mathcal{Y}_n(i):=\left\{(\mathbf{X}_k, \mathbf{e}_i) \Given \mathbf{X}_k\text{ is $T$-nice, }\mathbf{e}_i\in (H_n\cap\tE_n(4/\gamma_n))^i:\; \{u_1, \ldots, u_i\}\subseteq W_n^\tc\right\}$$
and the proper tuples 
\begin{align*}
    \mathcal{Y}_n^{\text{(prop)}}(i):=\left\{(\mathbf{X}_k, \mathbf{e}_i)\in \mathcal{Y}_n(i) \Given
    \substack{\text{(\ref{formula:R_eff_k}) holds for any edge of $T(\mathbf{X}_k)\cup \{e_1, \ldots, e_i\}$, and}\\\text{$u_1, \ldots, u_i$ are distinct}}\right\}.
\end{align*}
The inequality (\ref{formula:R_eff_k}) for $\mathbf{X}_{k+i}$ related to any $T'\in \mathcal{T}_T(i)$ and multiplied by $c(X_{\ell}, X_{p_{T'}(\ell)})$ can be rewritten as 
$$c(X_{\ell}, X_{p_{T'}(\ell)})\Reff \left(X_{\ell} \leftrightarrow \{X_1, \ldots, X_{\ell-1}\} \right)\in \left(1-O(\delta_n^{1/2}), 1+O(\delta_n^{1/2})\right)\frac{c(X_{\ell}, X_{p_{T'}(\ell)})\sum_{j=1}^{\ell}C_{X_j}}{C_{X_{\ell}}\sum_{j=1}^{\ell-1}C_{X_j}}$$
for any $1\le \ell \le k+i$. Multiplying these estimates for $1\le \ell \le k+i$, we get
\begin{align}
q_n(T(\mathbf{X}_k))q_n(\mathbf{e}_i\given T(\mathbf{X}_k))&\in \left((1-O(\delta_n^{1/2}))^k,(1+O(\delta_n^{1/2}))^k\right)s_n( T(\mathbf{X}_k))s_n(\mathbf{e}_i\given T(\mathbf{X}_k))\label{formula:proper_qs}
\end{align}
where $s_n(T(\mathbf{X}_k))s_n(\mathbf{e}_i\given T(\mathbf{X}_k))=\Theta(1)\gamma^{-(k-1+i)}$, hence
\begin{align*}
    &\sum_{\substack{(\mathbf{X}_k, \mathbf{e}_i)\in \mathcal{Y}_n^{\text{(prop)}}(i)}}\big|q_n(T(\mathbf{X}_k))q_n(\mathbf{e}_i\given T(\mathbf{X}_k))-s_n(T(\mathbf{X}_k))s_n(\mathbf{e}_i\given T(\mathbf{X}_k))\big|\\
    &\hspace{3cm}\le\left(\left(1+\frac{O(\delta_n^{1/2})}{\gamma_n}\right)^{k-1+i}-1\right)\frac{O(1)}{\gamma_n^{k-1+i}}\sum_{\substack{(\mathbf{X}_k, \mathbf{e}_i)\in \mathcal{Y}_n^{\text{(prop)}}(i)}\;}\prod_{j=1}^i
    c(e_j)\\
    &\hspace{3cm}\le O(\delta_n^{1/2}) \frac{O(1)}{\gamma_n^{k-1+i}}\sum_{\substack{(\mathbf{X}_k, \mathbf{e}_i)\in \mathcal{Y}_n^{\text{(prop)}}(i)}\;}\prod_{e\in T(\mathbf{X}_k)\cup\{e_1, \ldots, e_i\}}c(e)=O(\delta_n^{1/2})|V(G_n)|.
\end{align*}

Using that
$\P_{\mathbf{X}_k}\left( T'(\mathbf{X}_{k+i})=T(\mathbf{X}_k)\cup\mathbf{e}_i \right)=\Theta(1)\frac{1}{\gamma_n^i}\prod_{j=1}^ic(e_i)$ for any $\mathbf{X}_k\in \mathcal{C}(W_n^\tc)$ and applying Lemma \ref{lemma:Reff_tree_regular} to every $T'\in \mathcal{T}(T, i)$, we get
\begin{align*}
    &\sum_{\substack{(\mathbf{X}_k, \mathbf{e}_i)\in \mathcal{Y}_n(i)\setminus \mathcal{Y}_n^{\text{(prop)}}(i)}}\big|q_n(T(\mathbf{X}_k))q_n(\mathbf{e}_i\given T(\mathbf{X}_k))-s_n( T(\mathbf{X}_k))s_n(\mathbf{e}_i\given T(\mathbf{X}_k))\big|\\
    &\hspace{5cm}\le \sum_{\substack{(\mathbf{X}_k, \mathbf{e}_i)\in \mathcal{Y}_n(i)\setminus \mathcal{Y}_n^{\text{(prop)}}(i)}} \frac{O(1)}{\gamma_n^{k-1+i}}\prod_{e\in T(\mathbf{X}_k)\cup\{e_1, \ldots, e_i\}}c(e)\\
    &\hspace{5cm}\le \sum_{T'\in \mathcal{T}(T, i)}\P\left(\mathbf{X}_{k+i}\in \mathcal{Y}_n^{T'}(0)\setminus \mathcal{Y}_n^{\text{T', (prop)}}(0)\right)\\
    &\hspace{5cm}\le O(\delta_n^{1/2})|V(G_n)|.
\end{align*}
Therefore, we have obtained that
\begin{align*}
    &\sum_{\substack{(\mathbf{X}_k, \mathbf{e}_i)\in \mathcal{Y}_n(i)}}\big|q_n(T(\mathbf{X}_k))q_n(\mathbf{e}_i\given T(\mathbf{X}_k))-s_n( T(\mathbf{X}_k))s_n(\mathbf{e}_i\given T(\mathbf{X}_k))\big|=O(\delta_n^{1/2}),
\end{align*}
Then, for
\begin{align*}
    \mathcal{A}'_n(i_0):=\left\{\mathbf{X}_k \Given \big|q_n(T(\mathbf{X}_k))Q_n(i\given T(\mathbf{X}_k))-s_n( T(\mathbf{X}_k))S_n(i\given T(\mathbf{X}_k))\big|\le \delta_n^{1/4}\; \forall i\le i_0\right\},
\end{align*}
we have that $\mathcal{A}'_n\subseteq \CN$ and $\P(\mathbf{X}_k\in \mathcal{A}'_n)=1-O_{i_0}(\delta_n^{1/4})$ by Markov's inequality. Consequently, we have indeed obtained (\ref{formula:Q_and_S}) on a large $\mathcal{A}'_n$.

\emph{In order to check (\ref{formula:S_and_b}),} note that we can rewrite
\begin{equation*}
\begin{aligned}
    &\sum_{\mathbf{e}_i \in H_n^i} \frac{C(\mathbf{X}_k)+C(\{u_1, \ldots, u_i\})}{C(\mathbf{X}_k)\prod_{j=1}^{i}C_{u_j}}\prod_{j=1}^{i}c(e_j)=\sum_{\mathbf{e}_i \in H_n^i} \frac{C(\mathbf{X}_k)+iC_{u_1}}{C(\mathbf{X}_k)}\frac{\prod_{j=1}^{i}c(e_j)}{\prod_{j=1}^{i}C_{u_j}}\\
    &\hspace{2cm}=\sum_{\mathbf{e}_i \in H_n^i} \prod_{j=1}^{i}\frac{c(e_j)}{C_{u_j}}+i\sum_{e_i}\frac{c(e_i)}{C(\mathbf{X}_k)}\sum_{\mathbf{e_{i-1}} \in H_n^{i-1}} \prod_{j=1}^{i}\frac{c(e_j)}{C_{u_j}}\\
    &\hspace{2cm}=b(\mathbf{X}_t)^i+i\frac{C(\mathbf{X}_t)}{C(\mathbf{X}_k)}b(\mathbf{X}_t)^{i-1},
\end{aligned}
\end{equation*}
what we also wanted. We also remark that, by $s_n(T(\mathbf{X}_k))=\frac{C(\mathbf{X}_k)\prod_{e\in T(\mathbf{X}_k)} c(e)}{\prod_{j=1}^{k}C_{X_j}}$,
\begin{equation}\label{formula:b_closed_form_F}
\begin{aligned}
s_n(T(\mathbf{X}_k))\sum_{i=0}^\infty\frac{(-1)^i}{i!}\left(b(\mathbf{X}_t)^i+i\frac{C(\mathbf{X}_t)}{C(\mathbf{X}_k)}b(\mathbf{X}_t)^{i-1}\right)&=s_n(T(\mathbf{X}_k))\left(e^{-b(\mathbf{X}_t)}-\frac{C(\mathbf{X}_t)}{C(\mathbf{X}_k)}e^{-b(\mathbf{X}_t)}\right)\\
&=\frac{C(\mathbf{X}_k)\prod_{e\in T(\mathbf{X}_k)} c(e)}{\prod_{j=1}^{k}C_{X_j}} \frac{e^{-b(\mathbf{X}_t)}\sum_{j=t+1}^kC_{X_j}}{C(\mathbf{X}_k)}\\
&=|V(G_n)|F_{\c_n}(T(\mathbf{X}_k))\prod_{e\in T(\mathbf{X}_k)} c(e)
\end{aligned}
\end{equation}

\emph{Now, we are summarizing the proof.} On the event $\{\mathbf{X}_k\in \mathcal{A}'_n(i_0(\eps))\}$,
\begin{align*}
&\left|P_n^{\text{(eq)}}(\mathbf{X}_k)-|V(G_n)|F_{\c_n}(T(\mathbf{X}_k))\prod_{e\in T(\mathbf{X}_k)} c(e)\right| \\
&\hspace{2.5cm}=\left|P_n^{\text{(eq)}}(\mathbf{X}_k)-s_n(T(\mathbf{X}_k))\sum_{i=0}^{\infty}\frac{(-1)^i}{i!}\left(b(\mathbf{X}_t)^{i}-i\frac{C(\mathbf{X}_t)}{C(\mathbf{X}_k)}b(\mathbf{X}_t)^{i-1}\right)\right|\\
&\hspace{2.5cm}\le\left|P_n^{\text{(eq)}}(\mathbf{X}_k)-s_n(T(\mathbf{X}_k))\sum_{i=0}^{i_0}\frac{(-1)^i}{i!}\left(b(\mathbf{X}_t)^{i}-i\frac{C(\mathbf{X}_t)}{C(\mathbf{X}_k)}b(\mathbf{X}_t)^{i-1}\right)\right|+\frac{\eps}{4}P_n^{\text{(sub)}}(\mathbf{X}_k)\\
&\hspace{2.5cm}\le\left|P_n^{\text{(eq)}}(\mathbf{X}_k)-s_n(T(\mathbf{X}_k))\sum_{i=0}^{i_0}\frac{(-1)^i}{i!}S_n(i\given T(\mathbf{X}_k))\right|+\left(o(1)+\frac{\eps}{4}\right)P_n^{\text{(sub)}}(\mathbf{X}_k)\\
&\hspace{2.5cm}\le \left|P_n^{\text{(eq)}}(\mathbf{X}_k)-q_n(T(\mathbf{X}_k))\sum_{i=0}^{i_0}\frac{(-1)^i}{i!}Q_n(i\given T(\mathbf{X}_k))\right|+\left(o(1)+\frac{\eps}{4}\right)P_n^{\text{(sub)}}(\mathbf{X}_k)\\
&\hspace{2.5cm}\le \left(\frac{\eps}{4}+o(1)+\frac{\eps}{4}\right)P_n^{\text{(sub)}}(\mathbf{X}_k) \le \eps P_n^{\text{(sub)}}(\mathbf{X}_k)
\end{align*}
where the second line follows from (\ref{formula:b_closed_form_F}), the third from (\ref{formula:sum_is_small}), the fourth from (\ref{formula:S_and_b}), the fifth from (\ref{formula:Q_and_S}), and the sixth from (\ref{formula:incl-excl}), (\ref{formula:sum_is_small}) and $q_n(T(\mathbf{X}_k))=P_n^{\text{(sub)}}(\mathbf{X}_k)$.

We finish our proof by choosing $\eps_n\rightarrow 0$ so slowly that $\P(\mathbf{X}_k\in \mathcal{A}'_n(i_0(\eps_n)))=1-o(1)$.
\end{proof}

\subsubsection{The assumption is satisfied by i.i.d.~networks}
\label{subsubsec:i.i.d.}

We are going to use the following remark several times during the proof of Lemmata \ref{lemma:rdm_cond_balanced} and \ref{lemma:F_annealed-quenched}:
\begin{lemma}\label{lemma:chebyshev} We consider some positive integers $d_n, N_n$, and $\beta_n\ge 0$, $a_n>0$, $\{U_{i}\}_{i=1}^{N_n}$ distributed as i.i.d.~$\unif[0, 1]$, $\{X_{n, i}\}_{i=1}^{N_n}$ where each $X_{n, i}$ is either $X_{n, i}=\exp(-U_i\beta_n)$ or $X_{n, i}=2\exp(-U_i\beta_n)$ and $S_{n, N_n}:=\sum_{i=1}^{N_n}X_{n, i}$, then there exists a constant $C:=C(\{\beta_n\}_n, \{d_n\}_n)$ such that
$$\P\left(\left|S_{n, N_n}-\E[S_{n, N_n}]\right|\ge a_n\E\left[S_{n, N_n}\right]\right)\le C\frac{\max(1,\beta_n)}{N_na_n^2}$$
\end{lemma}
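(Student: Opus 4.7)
The plan is to apply Chebyshev's inequality directly. Since the $U_i$'s are i.i.d.\ and the factor $1$ or $2$ attached to each term is a deterministic function of $i$, the $X_{n,i}$'s are independent. Write $\nu_n := \E[e^{-\beta_n U_1}] = (1-e^{-\beta_n})/\beta_n$ and $\sigma_n^2 := \var(e^{-\beta_n U_1})$, and let $k$ denote the (deterministic) number of indices $i$ with $X_{n,i} = 2e^{-\beta_n U_i}$. Then $\E[S_{n,N_n}] = (N_n + k)\nu_n$ and $\var(S_{n,N_n}) = (N_n + 3k)\sigma_n^2$ (a term with factor $2$ contributes $4\sigma_n^2$ to the variance), so Chebyshev's inequality yields
\begin{align*}
    \P\bigl(|S_{n,N_n} - \E S_{n,N_n}| \ge a_n \E S_{n,N_n}\bigr) \le \frac{(N_n+3k)\sigma_n^2}{a_n^2 (N_n+k)^2 \nu_n^2} \le \frac{4\sigma_n^2}{a_n^2 N_n \nu_n^2},
\end{align*}
where the last step uses $(N_n+3k) \le 4(N_n+k)$ and $N_n+k \ge N_n$.

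It remains to show that $\sigma_n^2/\nu_n^2 \le C' \max(1,\beta_n)$ for an absolute constant $C'$. I would use the crude bound $\sigma_n^2 \le \E[e^{-2\beta_n U_1}] = (1-e^{-2\beta_n})/(2\beta_n)$ together with the factorization $1-e^{-2\beta_n} = (1-e^{-\beta_n})(1+e^{-\beta_n})$ to obtain
\begin{align*}
    \frac{\sigma_n^2}{\nu_n^2} \le \frac{\beta_n(1-e^{-2\beta_n})}{2(1-e^{-\beta_n})^2} = \frac{\beta_n(1+e^{-\beta_n})}{2(1-e^{-\beta_n})}.
\end{align*}
A short case split finishes the argument: for $\beta_n \ge 1$, the numerator is at most $2\beta_n$ and the denominator is at least $2(1-e^{-1})$, giving the bound $\beta_n/(1-e^{-1})$; for $0 < \beta_n \le 1$, the inequality $1-e^{-\beta_n} \ge \beta_n/2$ (convexity of $e^{-x}$) gives the constant bound $2$; the case $\beta_n = 0$ is trivial since then $\sigma_n^2 = 0$. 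Combining these three regimes yields the claim with an absolute constant, which in particular suffices for the stated dependence on $\{\beta_n\}_n$ and $\{d_n\}_n$.

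The argument is essentially a one-line application of Chebyshev's inequality once the ratio $\sigma_n^2/\nu_n^2$ is controlled; the only mildly delicate point is the elementary case split handling the degeneracy of $\nu_n$ near $\beta_n = 0$. I do not foresee any real obstacle beyond this routine analysis.
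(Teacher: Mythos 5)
Your proposal is correct and follows essentially the same route as the paper: compute the mean and variance of $S_{n,N_n}$, apply Chebyshev, and reduce to bounding a ratio of moments by $O(\max(1,\beta_n))$. The paper gets to that last step a little faster by noting $\sigma_n^2 = \frac{1+e^{-\beta_n}}{2}\nu_n - \nu_n^2 \le \nu_n$, so the Chebyshev bound becomes $4/(a_n^2 N_n \nu_n)$ and one only needs $1/\nu_n = \beta_n/(1-e^{-\beta_n}) = O(\max(1,\beta_n))$; your explicit case split on $\beta_n$ establishes the same fact. Both handle the variance uniformly over the (deterministic) choice of factor $1$ vs.\ $2$ — the paper by bounding each $\var(X_{n,i}) \le 4\nu_n$ term by term, you by tracking the count $k$ explicitly — and the two are equivalent.
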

\begin{proof}
    For $\mu_n:=\E\left[e^{-\beta_nU_1}\right]=\frac{1-e^{-\beta_n}}{\beta_n}$, we have that $\E\left[X_{n, i}\right]\in \{\mu_n, 2\mu_n\}$ and $\var(X_{n, i})\le 2^2(\frac{1-e^{-2\beta_n}}{2\beta_n}-\mu_n^2)=4(\frac{1+e^{-\beta_n}}{2}\mu_n-\mu_n^2)\le 4\mu_n$. Therefore, $\E[S_{n, N_n}]=\sum_{i=1}^{N_n}\E[X_{n, i}]\ge N_n\mu_n$ and, by independence, $\var(S_{n, N_n})=\sum_{i=1}^{N_n}\var(X_{n, i})\le 4N_n\mu_n$. By Chebyshev's inequality,
    \begin{align*}
        \P\left(\left|S_{n, N_n}-\E[S_{n, N_n}]\ge a_n\E[S_{n, N_n}]\right|\right)\le \frac{\var(S_{n, N_n})}{(a_n\E[S_{n, N_n}])^2}\le \frac{4\mu_nN_n}{a_n^2\mu_n^2N_n^2}=\frac{4}{a_n^2\mu_nN_n}=O\left(\frac{\max(\beta_n, 1)}{N_na_n^2}\right).
    \end{align*}
\end{proof}

\begin{proof}[Proof of Lemma \ref{lemma:rdm_cond_balanced}] We prove only part b), since part a) works the same way, but there are less conditions to check. In this proof, we abbreviate as $W_n^\tc(\mathbf{1}):=W_n^\tc(G_n, \mathbf{1}, d_n, K)$ and $W_n^\tc(\c_{\beta_n}):=W_n^\tc(G_n, \c_{\beta_n}, \gamma_n, \widetilde{K})=W_n^\tc(G_n, \c_{\beta_n}, \frac{\mu_nd_n}{2}, 2K+1)$.

For any $v\in W_n^\tc(\mathbf{1})$, by Lemma \ref{lemma:chebyshev} with $N_n=\deg(v)\ge d_n$ and $a_n=\frac{1}{2\widetilde{K}}$, we have
\begin{equation}\label{formula:prob_by_rmrk}
    \begin{aligned}
        \P\left(v\notin W_n^\tc(\c_{\beta_n})\right)&=\P\left(\frac{C_{\beta_n}(v)}{\mu_nd_n/2}\notin [1, 2\widetilde{K}+1]\right)=\P\left(\frac{C_{\beta_n}(v)}{\mu_nd_n}\notin \left[1-\widetilde{K}\frac{1}{2\widetilde{K}}, \widetilde{K}+\widetilde{K}\frac{1}{2\widetilde{K}}\right]\right)\\
&\le \P\left(|C_{\beta_n}(v)-\E[C_{\beta_n}(v)]|\ge \frac{1}{2\widetilde{K}}\E[C_{\beta_n}(v)]\right)= O\left(\frac{\max(1,\beta_n)}{d_n}\right)=O(\delta_n^2),
    \end{aligned}
\end{equation}
where we used the assumption $\max(1,\beta_n)/d_n=O(\widetilde{\delta}_n^2)$ of this lemma.

Hence, we have obtained $\E[|W_n^\tc(\mathbf{1})\setminus W_n^\tc(\c_{\beta_n})|]\le |W_n^\tc(\mathbf{1})|o(\widetilde{\delta}_n^2)$, so by Markov's inequality
\begin{equation}\label{formula:Markov_for_sizes}
    \begin{aligned}
    \P\left(|V(G_n)\setminus W_n^\tc(\c_{\beta_n})|\ge \delta_n|V(G_n)|\right)&\le \P\left(|V(G_n)\setminus W_n^\tc(\c_{\beta_n})|\ge 2\widetilde{\delta}_n|W_n^\tc(\mathbf{1})|\right)\\
    &\le \P\left(|W_n^\tc(\mathbf{1})\setminus W_n^\tc(\c_{\beta_n})|\ge \widetilde{\delta}_n|W_n^\tc(\mathbf{1})|\right)\\
    &\le \frac{\E[|W_n^\tc(\mathbf{1})\setminus W_n^\tc(\c_{\beta_n})|]}{\widetilde{\delta}_n|V(G_n)|}\le \frac{|W_n^\tc(\mathbf{1})|O(\widetilde{\delta}_n^2)}{\widetilde{\delta}_n|V(G_n)|}=O(\widetilde{\delta}_n),
    \end{aligned}
\end{equation}
where we used in the first inequality that $\delta_n|V(G_n)| \ge (2K+1)\widetilde{\delta_n}|V(G_n)| \ge 2\widetilde{\delta}_n|W_n^\tc(\mathbf{1})|$ for large enough $n$, and $|V(G_n)\setminus W_n^\tc(\mathbf{1})|\le \widetilde{\delta}_n|V(G_n)|$ in the second inequality. Therefore, we obtained that the first condition of Assumption \ref{assum:for_local_lim} holds with probability $1-O(\widetilde{\delta}_n)$.

Now, we check the second condition of Assumption \ref{assum:for_local_lim}. By Definition~\ref{assum:for_local_lim}, we have that  $\sum_{v\notin W_n^\tc(\mathbf{1})}\deg(v)\le \delta_n d_n |V(G_n)|$, so the probability of not satisfying the condition can be estimated as
\begin{align*}
    \P\left(\sum_{v\notin W_n^\tc(\c_{\beta_n})}C_{\beta_n}(v)\ge 
    \delta_n \gamma_n|V(G_n)|\right)&=\P\left(\sum_{v\notin W_n^\tc(\c_{\beta_n})}C_{\beta_n}(v)\ge 
    (2\widetilde{K}+3)\widetilde{\delta}_n \frac{\mu_nd_n}{2}|V(G_n)|\right)\\
    &\le \P\left(\sum_{v\notin W_n^\tc(\mathbf{1})}C_{\beta_n}(v)\ge 
    (2\widetilde{K}+1)\widetilde{\delta}_n \frac{\mu_nd_n}{2}|V(G_n)|\right)\\
    &=\P\left(\sum_{v\notin W_n^\tc(\mathbf{1})}C_{\beta_n}(v)\ge 
    \left(1+\frac{1}{2\widetilde{K}}\right)\widetilde{K}\widetilde{\delta}_n \mu_nd_n|V(G_n)|\right)\\
    &\le \P\left(C\left(W_n^\tc(\mathbf{1})^c\right)\ge \left(1+\frac{1}{2\widetilde{K}}\right)\E\left[C\left(W_n^\tc(\mathbf{1})^c\right)\right]\right)\\
    &=O\left(\frac{\max(1,\beta_n)}{\deg(W_n^\tc(\mathbf{1})^c)}\right)=O\left(\frac{\max(1,\beta_n)}{d_n}\right)=O(\widetilde{\delta}_n^{2}).
\end{align*}

The third condition of Assumption \ref{assum:for_local_lim} is satisfied if $(2\widetilde{K}+3)\widetilde{\delta}_n\ge \frac{1}{\mu_nd_n/2}$ which holds for large enough $n$ since $\frac{1}{\mu_nd_n}=\Theta(1)\frac{\max(1, \beta_n)}{d_n}=O(\widetilde{\delta}_n^2)$.
\end{proof}

We are proving Lemma \ref{lemma:F_annealed-quenched} by showing a concentration result for $F_{\c_{\beta_n}}(\mathbf{v}_k)$ which is based on the concentration of $C_{\beta_n}(v)$ and $b_{\c_{\beta_n}}(v)$ around their expectations. Clearly, $\E[C_{\beta_n}(v)]=\mu_n\deg(v)$ holds, and we have that $\E[b_{\c_{\beta_n}}(v)]=b_{G_n}(v)$ since $\deg(u)\E\left[\frac{c_{\beta_n}(v,u)}{C_{\beta_n}(u)}\right]=\E\left[\frac{C_{\beta_n}(u)}{C_{\beta_n}(u)}\right]=1$ for any $u\sim v$ by symmetry.

\begin{lemma}\label{lemma:b-cond_concentration}
    For any fixed $\eps>0$, we consider $$W_n^{\textrm{(conc)}}(\eps):=\left\{v\in W_n^\tc(\c_{\beta_n}):\; \frac{e^{-b_{\c_{\beta_n}}(v)}}{e^{-b_{G_n}(v)}}\in (1-\eps, 1+\eps)\text{ and } \frac{C_{\beta_n}(v)}{\mu_n\deg(v)}\in (1-\eps, 1+\eps)\right\}.$$ Then, with probability $1-O_{\eps}(\widetilde{\delta}_n)$, we have $$|W_n^{\textrm{(conc)}}(\eps)|\ge (1-O(\sqrt{\delta_n}))|V(G_n)|,$$
    where the $O$ in the last line does not depend on $\eps$.
\end{lemma}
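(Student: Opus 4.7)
I plan to split $V(G_n) \setminus W_n^{\textrm{(conc)}}(\eps)$ into three pieces and handle each separately. First, by Lemma~\ref{lemma:rdm_cond_balanced}(b) the network $(G_n, \c_{\beta_n})$ is high-degree almost balanced with probability $1 - O(\widetilde{\delta}_n)$, and on this event formula~(\ref{formula:tW_explicit_bound}) applied both to $(G_n, \c_{\beta_n})$ and to $(G_n, \mathbf{1})$ gives $|\tW_n(\c_{\beta_n}) \cap \tW_n(G_n)| \ge (1 - C_0\sqrt{\delta_n})|V(G_n)|$ with $C_0$ not depending on $\eps$, covering the vertices outside this ``doubly typical'' set. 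Second, for the $C$-concentration failure, Lemma~\ref{lemma:chebyshev} with $N_n = \deg(v) \ge d_n$ and $a_n = \eps$ gives per-vertex probability $O_\eps(\max(1,\beta_n)/d_n) = O_\eps(\widetilde{\delta}_n^2)$, so linearity of expectation and Markov with threshold $\widetilde{\delta}_n|V(G_n)|$ bound the failure set by $\widetilde{\delta}_n|V(G_n)| \le O(\sqrt{\delta_n})|V(G_n)|$ with probability $1 - O_\eps(\widetilde{\delta}_n)$.

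The third piece---$b$-concentration for doubly typical $C$-concentrated $v$---is the substantive one. Formula~(\ref{formula:b(v)_typ}) applied to both networks reduces the task, up to an eventually negligible $O(\sqrt{\delta_n}) < \eps/2$ error, to comparing
\begin{align*}
    \sum_{u \sim v,\, u \in W_n^\tc(\c_{\beta_n})} \frac{c_{\beta_n}(u,v)}{C_{\beta_n}(u)} \quad \text{versus} \quad \sum_{u \sim v,\, u \in W_n^\tc(\mathbf{1})} \frac{1}{\deg(u)}.
\end{align*}
After absorbing the symmetric difference of the two index sets into $O(\sqrt{\delta_n})$ via the $\tW_n$ typical-neighbor conditions applied to each network, I use $C$-concentration of the neighbors $u$ to replace $1/C_{\beta_n}(u)$ by $(1+O(\eps))/(\mu_n\deg(u))$ on the common index set. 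The resulting quantity $\widetilde{b}(v) := \sum_{u \sim v,\, u \in W_n^\tc(\mathbf{1})} c_{\beta_n}(u,v)/(\mu_n\deg(u))$ is a sum of \emph{independent} random variables (each a function of the single edge label $U_{(u,v)}$) with mean $\sum 1/\deg(u)$ and variance bounded by $(4/\mu_n) \sum_{u \in W_n^\tc(\mathbf{1})} 1/\deg(u)^2 \le (4/(\mu_n d_n)) \cdot b_{G_n}(v) = O(1/(\mu_n d_n)) = O(\widetilde{\delta}_n^2)$, where the last step uses $\mu_n d_n = \Omega(1/\widetilde{\delta}_n^2)$ from the hypothesis. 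Chebyshev then gives per-vertex failure probability $O_\eps(\widetilde{\delta}_n^2)$, and a further Markov argument with threshold $\widetilde{\delta}_n|V(G_n)|$ concludes.

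The main obstacle is the double-counting step for neighbors $u$ that fail $C$-concentration: via $\sum_v |N(v) \cap A_C| \le \widetilde{K} d_n |A_C \cap W_n^\tc(\mathbf{1})| + \widetilde{\delta}_n d_n |V(G_n)|$ (the first term bounded using the degree constraint on $W_n^\tc(\mathbf{1})$ and the second from Assumption~\ref{assum:for_local_lim} on $(G_n,\mathbf{1})$), one shows that apart from $O_\eps(\sqrt{\widetilde{\delta}_n})|V(G_n)|$ exceptional vertices, every $v$ has at most $\sqrt{\widetilde{\delta}_n} d_n$ such neighbors, so the above approximation is valid. Carefully tracking which error terms depend on $\eps$---the per-vertex failure probabilities $O_\eps(\widetilde{\delta}_n^2)$ and the exceptional sets $O_\eps(\sqrt{\widetilde{\delta}_n})|V(G_n)|$, all absorbed into the overall probability $O_\eps(\widetilde{\delta}_n)$---versus which do not---the $\eps$-independent $O(\sqrt{\delta_n})$ size bound coming from the structural formula~(\ref{formula:tW_explicit_bound})---is the delicate bookkeeping required.
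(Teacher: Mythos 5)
Your decomposition into $\tW_n$-typicality, $C$-concentration, and $b$-concentration, and the observation that $\widetilde{b}(v)=\sum_{u\sim v,\,u\in W_n^\tc(\mathbf{1})}c_{\beta_n}(u,v)/(\mu_n\deg(u))$ is a sum of independent single-edge variables amenable to Chebyshev, are both aligned with the paper's strategy. However, your treatment of the neighbors $u$ that fail $C$-concentration---which you correctly identify as ``the main obstacle''---has a genuine gap.

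Your double-count bounds the \emph{number} of bad neighbors per vertex: apart from $O_\eps(\sqrt{\widetilde{\delta}_n})|V(G_n)|$ exceptions, every $v$ has at most $\sqrt{\widetilde{\delta}_n}\,d_n$ neighbors $u$ with unconcentrated $C_{\beta_n}(u)$. But what must be shown small is the conductance-weighted contribution $\sum_{u\sim v,\,u\text{ bad}}\frac{c_{\beta_n}(v,u)}{C_{\beta_n}(u)}$ to $b_{\c_{\beta_n}}(v)$. Each such term can be as large as $\frac{1}{\gamma_n}=\frac{2}{\mu_n d_n}$ (and $c_{\beta_n}(v,u)$ can be close to $1$, not $\mu_n$, for a bad $u$), so your per-vertex count only gives a bound of order $\sqrt{\widetilde{\delta}_n}\,d_n\cdot\frac{2}{\mu_n d_n}=\frac{2\sqrt{\widetilde{\delta}_n}}{\mu_n}$. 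Since $\widetilde{\delta}_n\ge (\max(\beta_n,1)/d_n)^{1/2}$ and $\mu_n=(1-e^{-\beta_n})/\beta_n$, this is at least of order $\beta_n(\beta_n/d_n)^{1/4}$, which diverges for, e.g., $\beta_n=d_n^{1/2}\ll d_n$. So the step ``so the above approximation is valid'' does not follow: bounding the number of bad neighbors does not bound their weighted contribution.

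The paper avoids this by double-counting the weighted quantity itself: $\sum_{v\in V(G_n)}\sum_{u\notin A_n(\eps_0):\,u\sim v}\frac{c_{\beta_n}(v,u)}{C_{\beta_n}(u)}=\sum_{u\notin A_n(\eps_0)}\frac{1}{C_{\beta_n}(u)}\sum_{v\sim u}c_{\beta_n}(v,u)=|A_n(\eps_0)^c|\le \delta_n|V(G_n)|$, because the inner sum telescopes to $1$ for each $u$. Markov then says that apart from $\sqrt{\delta_n}|V(G_n)|$ vertices $v$, the weighted bad-neighbor contribution to $b_{\c_{\beta_n}}(v)$ is itself $\le\sqrt{\delta_n}$, which is exactly what is needed, with no extra $1/\mu_n$ factor. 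You should replace your edge-count double-count with this weighted version; the rest of your argument then goes through as in the paper (the paper's $A_n(\eps_0)$, $B_n(\eps_0)$, $D_n(\eps_0)$ are close analogues of your three pieces).
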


\begin{proof}
We write $\eps_0:=\min(\eps, -\log(1-\eps)/(3K+3))$, and $N_0({\eps_0})$ denotes the smallest integer such that $O(\sqrt{\delta_n})\le \eps_0$ for any $n\ge N_0({\eps_0})$.

We consider the vertex sets
\begin{gather*}
    A_n(\eps_0):=\{u\in W^\tc_n(\mathbf{1}):\; C_{\beta_n}(u)\in (1-\eps_0, 1+\eps_0)\deg(u)\mu_n\}\text{ and }\\
    D_n(\eps_0):=\left\{v\in \tW_n(\mathbf{1}):\;\left|\sum_{u\in \tW_n(\mathbf{1}):\;u\sim v}(c_{\beta_n}(v,u)-\mu_n)\right|\le \eps_0\right\}.
\end{gather*}
Then, the event
$$\mathcal{E}_n(\eps_0):=\left\{\min(|A_n(\eps_0)|, |W^\tc_n(\c_{\beta_n})|)\ge (1-\delta_n) |V(G_n)|\text{ and }|\tW_n(\mathbf{1})\setminus D_n(\eps_0)|\le \widetilde{\delta}_n|\tW_n(\mathbf{1})|\right\}$$ has probability $\P(\mathcal{E}_n(\eps_0))=1-O_{\eps_0}(\delta_n)$ for any $\eps_0$ by Lemma \ref{lemma:chebyshev} with $a_n\equiv \eps_0$ and $N_n\ge d_n$: for any $u\in W^\tc_n(\mathbf{1})$, likewise (\ref{formula:prob_by_rmrk}), $\P\left(v\notin A_n(\eps_0)\right)\le \P\left(C_{\c_{\beta_n}}(v)\notin (1-\eps_0, 1+\eps_0)\E[C_{\c_{\beta_n}}(v)]\right)=O_{\eps_0}(\widetilde{\delta}_n^2)$ holds, and therefore, likewise (\ref{formula:Markov_for_sizes}), we have $\P\left(|V(G_n)\setminus A_n(\eps_0)|\ge \delta_n|\ge |V(G_n)|\right)\le O_{\eps_0}(\widetilde{\delta}_n)$. The bound $\P(|W^\tc_n(\c_{\beta_n})|\ge (1-\delta_n)|V(G_n)|)\ge 1-O(\delta_n)$ has already been shown in Lemma \ref{lemma:rdm_cond_balanced}. For any $v\in \tW_n(\mathbf{1})$, we have that $|u\in W_n^{\tc}(\mathbf{1}):\;u\sim v|=\Theta(d_n)$, so, similarly, we get that $\P\left(v\notin D_n(\eps_0)\right)\le O_{\eps_0}(\widetilde{\delta}_n^2)$ and that $\P\left(|\tW_n(\mathbf{1})\setminus D_n(\eps_0)|\ge \delta_n|\ge |\tW_n(\mathbf{1})|\right)\le O_{\eps_0}(\widetilde{\delta}_n)$ where the last bound is similar to the last two lines of (\ref{formula:Markov_for_sizes}).

Note that, on $\mathcal{E}_n(\eps_0)$,
\begin{equation}
    \begin{aligned}
            \sum_{v\in V(G_n)}\sum_{\substack{u\notin A_n(\eps_0):\\u\sim v}}\frac{c_{\beta_n}(v,u)}{C_{\beta_n}(u)}=\sum_{u\notin A_n(\eps_0)}\frac{1}{C_{\beta_n}(u)}\sum_{\substack{v\in V(G_n):\\u\sim v}}c_{\beta_n}(v,u)=\sum_{u\notin A_n(\eps_0)}1=|A_n(\eps_0)|\le\delta_n|V(G_n)|,
    \end{aligned}
\end{equation}
hence, writing $B_n(\eps_0):=\left\{v\in V(G_n)\Given \sum_{\substack{u\notin A_n(\eps_0):\;u\sim v}}\frac{c_{\beta_n}(v,u)}{C_{\beta_n}(u)}\le \sqrt{\delta_n}\right\}$, we have that $|B_n(\eps_0)|\ge (1-\sqrt{\delta_n})|V(G_n)|$ on $\mathcal{E}_n(\eps_0)$.

Writing $\overline{b}_{\c_{\beta_n}}(v):=\sum_{u\in W_n^{\tc}(\mathbf{1}):\;u\sim v}\frac{c_{\beta_n}(v,u)}{\mu_n\deg(u)}$, for any $v\in W^\tc_n(\c_{\beta_n})\cap B_n(\eps_0)$ and $n\ge N_0({\eps_0})$, we have that
\begin{equation*}
    \begin{aligned}
        \left|b_{\c_{\beta_n}}(v)-\overline{b}_{\c_{\beta_n}}(v)\right|&\le \sum_{u\in W^\tc_n(\mathbf{1}):\;u\sim v}c_{\beta_n}(v,u)\left|\frac{1}{C_{\beta_n}(u)}-\frac{1}{\mu_n\deg(u)}\right|+\sqrt{\delta_n}\\
        &\le \sum_{u\in W^\tc_n(\mathbf{1}):\;u\sim v}\frac{c_{\beta_n}(v,u)}{\mu_n\deg(u)}\left(\frac{1}{1+\eps_0}-\frac{1}{1-\eps_0}\right)+O(\sqrt{\delta_n})\\
        &\le 3\eps_0\sum_{u\in W^\tc_n(\mathbf{1}):\;u\sim v}\frac{c_{\beta_n}(v,u)}{\mu_n\deg(u)}+\sqrt{\delta_n}\le 3\eps_0K+\sqrt{\delta_n}\\
        &\le \eps_0(3K+1).
    \end{aligned}
\end{equation*}

On $D_n(\eps_0)$, by (\ref{formula:b(v)_typ}), for $n\ge N_0({\eps_0})$,
\begin{equation*}
    \begin{aligned}
    \left|\overline{b}_{\c_{\beta_n}}(v)-b_{G_n}(v)\right|&=\left|\overline{b}_{\c_{\beta_n}}(v)-\sum_{\substack{u\in W_n^{\tc}(\mathbf{1}):\\u\sim v}}\frac{1}{\deg(u)}\right|+\sum_{\substack{u\notin W_n^{\tc}(\mathbf{1}):\\u\sim v}}\frac{1}{\deg(u)}\\
    &=\left|\sum_{\substack{u\in W_n^{\tc}(\mathbf{1}):\\u\sim v}}\frac{c_{\beta_n}(v,u)-\mu_n}{\mu_n\deg(u)}\right|+O(\sqrt{\delta_n})\le \left|\sum_{\substack{u\in W_n^{\tc}(\mathbf{1}):\\u\sim v}}\frac{c_{\beta_n}(v,u)-\mu_n}{\mu_nd_n}\right|+O(\sqrt{\delta_n})\\
    &\le \eps_0+O(\sqrt{\delta_n})<2\eps_0.
    \end{aligned}
\end{equation*}

Therefore, for any $v\in W^\tc_n(\c_{\beta_n})\cap B_n(\eps_0)\cap D_n(\eps_0)$ and $n\ge N_0(\eps_0)$, we have that 
\begin{align*}
    \left|b_{\c_{\beta_n}}(v)-b_{G_n}(v)\right|\le \eps_0(3K+1)+2\eps_0=\eps_0(3K+3)
\end{align*}
which, combined with the choice $\eps_0=\min(\eps, -\log(1-\eps)/(3K+3))$, implies that $e^{|b_{\c_{\beta_n}}(v)-b_{G_n}(v)|}\le e^{\eps_0(3K+3)}=\frac{1}{1-\eps}\le 1+\eps$, hence
$W_n^{\textrm{(conc)}}(\eps)\subseteq A_n(\eps_0)\cap W^\tc_n(\c_{\beta_n})\cap B_n(\eps_0)\cap D_n(\eps_0)$. Using Lemma~\ref{lemma_main:prob_not_nice} to estimate $|V(G_n)\setminus D_n(\eps_0)|\le \widetilde{\delta}_n|V(G_n)|+ |V(G_n)\setminus \tW_n|\le (\widetilde{\delta}_n+O(\widetilde{\delta}_n^{1/2}))|V(G_n)|$, on $\mathcal{E}_n(\eps_0)$, 
\begin{align*}
    |W_n^{\textrm{(conc)}}(\eps)|&\ge |A_n(\eps_0)\cap W^\tc_n(\c_{\beta_n})\cap B_n(\eps_0)\cap D_n(\eps_0)|\\
    &\ge (1-\delta_n-\delta_n-\sqrt{\delta_n}-O(\widetilde{\delta}_n^{1/2})-\widetilde{\delta}_n)|V(G_n)|\ge (1-O(\delta_n^{1/2}))|V(G_n)|
\end{align*}
holds where we have seen that  $\P(\mathcal{E}_n(\eps_0))=1-O_{\eps_0}(\delta_n)$.
\end{proof}

\begin{proof}[Proof of Lemma \ref{lemma:F_annealed-quenched}]
We start by checking (\ref{formula:F_annealed}). For $\mathbf{v}_k\in \C(W_n^{\textrm{(conc)}}(\eps))$ and any $j\le k$, $C_{\beta_n}(v_j)$ and $b_{\c_{\beta_n}}(v_j)$ are close to $\E[C_{\beta_n}(v_j)]=\mu_n\deg(v_j)$ and $\E[b_{\c_{\beta_n}}(v_j)]=b_{G_n}(v_j)$, hence $F_{\c_{\beta_n}}(T(\mathbf{v}_k))$ is close to
$\frac{1}{|V(G_n)|}\exp\left(-\sum_{j=1}^{t}b_{G_n}(v_j)\right)\frac{\sum_{j=t+1}^{k}\mu_n\deg(v_j)}{\prod_{j=1}^{k}\mu_n\deg(v_j)}=\frac{F_{G_n}(T(\mathbf{v}_k))}{\mu_n^{k-1}}$. More precisely, for small enough $\eps$ and $\mathbf{v}_k\in \C(W_n^{\textrm{(conc)}}(\eps))$,
\begin{align*}
    \left|F_{\c_{\beta_n}}(T(\mathbf{v}_k))-\frac{F_{G_n}(T(\mathbf{v}_k))}{\mu_n^{k-1}}\right|&\le \left((1+\eps)^t\frac{(1+\eps)}{(1-\eps)^k}-(1-\eps)^t\frac{(1-\eps)}{(1+\eps)^k}\right)F_{G_n}(T(\mathbf{v}_k))\mu_n^{-k+1}\\
    &=\left(\frac{(1+\eps)^{t+1+k}-(1-\eps)^{t+1+k}}{(1-\eps^2)^k}\right)F_{G_n}(T(\mathbf{v}_k))\mu_n^{-k+1}\\ 
    &\le \eps 2(t+k+1)\left(\frac{3}{2}\right)^{k+t}\frac{4}{3}^k 2F_{\c_{\beta_n}}(T(\mathbf{v}_k))\mu_n^{-k+1}\\
    &=O(\eps) F_{\c_{\beta_n}}(T(\mathbf{v}_k))
\end{align*}
where we used for $0\le x\le 1/2$, that $\frac{\partial}{\partial x}\left((1+x)^{t+k+1}-(1-x)^{t+k+1}\right)\le 2(t+k+1)(1+x)^{t+k}=2(t+k+1)(3/2)^{k+t}$ and $(1-x^2)^{-k}\le(3/4)^{-k}$, and that $ F_{G_n}(T(\mathbf{v}_k))\mu_n^{-k+1}\le 2F_{\c_{\beta_n}}(T(\mathbf{v}_k))$ for small enough $\eps$ and $\mathbf{v}_k\in \C(W_n^{\textrm{(conc)}}(\eps))$. Therefore, by Lemma \ref{lemma:F_on_W^tc} with $U_n=\emptyset$ and $f_n\equiv1$,
\begin{equation}\label{formula:F_on_Wconc}
    \begin{aligned}
        &\sum_{\mathbf{v}_k\in \C(W_n^{\textrm{(conc)}}(\eps))}\left|F_{\c_{\beta_n}}(T(\mathbf{v}_k))-\frac{F_{G_n}(T(\mathbf{v}_k))}{\mu_n^{k-1}}\right|\prod_{j=2}^k c_{\beta_n}(v_{p_T(j)}, v_j)\\
        &\hspace{4.5cm}\le O(\eps)\sum_{\mathbf{v}_k\in \C(W_n^\tc(\c_{\beta_n}))}F_{\c_{\beta_n}}(T(\mathbf{v}_k))\prod_{j=2}^k c_{\beta_n}(v_{p_T(j)}, v_j)\le
        \eps O(1).
    \end{aligned}
\end{equation}

On the other hand, on $\mathcal{E}_n(\eps_0)$, by Lemma \ref{lemma:F_on_W^tc} with $U_n=W_n^{\textrm{(conc)}}(\eps)$ and $f_n=O(\sqrt{\delta_n})$, we have
    \begin{align}\label{formula:F_on_E}
        \sum_{\mathbf{v}_k\in \C(W_n^\tc(\c_{\beta_n})\cap W_n^\tc(\mathbf{1}))\setminus \C(W_n^{\textrm{(conc)}}(\eps))}\left(F_{\c_{\beta_n}}(T(\mathbf{v}_k))+\frac{F_{G_n}(T(\mathbf{v}_k))}{\mu_n^{k-1}}\right)\prod_{j=2}^k c_{\beta_n}(v_{p_T(j)}, v_j)=O(\sqrt{\delta_n}).
    \end{align}
Moreover, considering the event $\mathcal{H}_n:=\{C_{\beta_n}(V(G_n))\notin [1-\delta_n, 1+\delta_n]\mu_n2|E(G_n)|\}$, by Lemma~\ref{lemma:chebyshev} with $a_n=\delta_n$ and $N_n=|E(G_n)|$, we have that $\P(\mathcal{H}_n^c)=1-\frac{O(1)}{\delta_n^2|V(G_n)|d_n\mu_n}=1-O(\frac{1}{(\delta_nd_n\mu_n)^2})=1-O(\frac{\delta_n^4}{(\delta_nd_n\mu_n)^2})=1-O(\delta_n^2)$ as $\delta_n^{-1}=O(\widetilde{\delta}_n^{-1})=O(\mu_nd_n)$ by the conditions of the lemma. Then, 
\begin{equation}\label{formula:exp_at_Hc}
    \begin{aligned}
            \E[\mathds{1}[\mathcal{H}_n^c]C_{\beta_n}(V(G_n))]&=\E[C_{\beta_n}(V(G_n))]-\E[\mathds{1}[\mathcal{H}_n]C_{\beta_n}(V(G_n))]\\
    &\le \mu_n2|E(G_n)|-(1-O(\delta_n))(1-\delta_n)\mu_n2|E(G_n)|=O(\delta_n)\mu_n|E(G_n)|,
    \end{aligned}
\end{equation}
therefore, also using that $F_{\c_{\beta_n}}(T(\mathbf{v}_k))+\frac{F_{G_n}(T(\mathbf{v}_k))}{\mu_n^{k-1}}=\frac{O(1)}{|V(G_n)|(\mu_nd_n)^{k-1}}=\frac{O(1)C_{\beta_n}(v_1)}{|V(G_n)|\mu_nd_n}\prod_{i=2}^k \frac{1}{C_{\beta_n}(v_{p(j)})}$ for any $\mathbf{v}_k\in \C(W_n^\tc(\c_{\beta_n})\cap W_n^\tc(\mathbf{1}))$, and the notation $\mathbf{X}_k$ coming from Definiton \ref{def:X_k}, we have
\begin{equation}\label{formula:F_outside_E}
    \begin{aligned}
        &\E\left[\mathds{1}[\mathcal{E}_n(\eps_0)^c]\sum_{\mathbf{v}_k\in \C(W_n^\tc(\c_{\beta_n})\cap W_n^\tc(\mathbf{1}))}\left(F_{\c_{\beta_n}}(T(\mathbf{v}_k))+\frac{F_{G_n}(T(\mathbf{v}_k))}{\mu_n^{k-1}}\right)\prod_{j=2}^k c_{\beta_n}(v_{p_T(j)}, v_j)\right]\\
        &\hspace{1cm}=O(1)\E\left[\frac{\mathds{1}[\mathcal{E}_n(\eps_0)^c]C_{\beta_n}(V(G_n))}{\mu_nd_n|V(G_n)|}\sum_{\substack{\mathbf{v}_k\in\C(W_n^\tc(\c_{\beta_n})\cap W_n^\tc(\mathbf{1}))}}\frac{C_{\beta_n}(v_1)}{C_{\beta_n}(V(G_n))}\prod_{j=2}^k \frac{c_{\beta_n}(v_{p_T(j)}, v_j)}{C_{\beta_n}(v_{p(j)})}\right]\\
        &\hspace{1cm}=O(1)\E\left[\frac{\mathds{1}[\mathcal{E}_n(\eps_0)^c]C_{\beta_n}(V(G_n))}{\mu_nd_n|V(G_n)|}\P\left(\mathbf{X}_k\in \C(W_n^\tc(\c_{\beta_n})\cap W_n^\tc(\mathbf{1}))\Given \c_{\beta_n}\right)\right]\\
        &\hspace{1cm}\le O(1)\E\left[\frac{\mathds{1}[\mathcal{E}_n(\eps_0)^c]C_{\beta_n}(V(G_n))}{\mu_nd_n|V(G_n)|}\right]\\
        &\hspace{1cm}\le O\left(\frac{1}{\mu_nd_n|V(G_n)|}\right)\big(\P\left(\mathcal{E}_n(\eps_0)^c\cap \mathcal{H}_n\right)(1+\delta_n)\mu_n 2|E(G_n)|+\E\left[\mathds{1}[\mathcal{H}_n^c]C_{\beta_n}(V(G_n))\right]\big)\\
        &\hspace{1cm}\le O\left(\frac{1}{\mu_nd_n|V(G_n)|}\right)\big(O_{\eps}(\delta_n)\mu_n|E(G_n)|+O(\delta_n)\mu_n|E(G_n)|\big)=O_{\eps}(\delta_n)
    \end{aligned}
\end{equation}
where the last line comes from Lemma \ref{lemma:b-cond_concentration} and (\ref{formula:exp_at_Hc}). Then,
\begin{align*}
    &\left|\E\left[\sum_{\mathbf{v}_k\in \C\left(W^\tc_n(\c_{\beta_n})\right)}F_{\c_{\beta_n}}(T(\mathbf{v}_k))\prod_{j=2}^k c(v_{p_T(j)}, v_j)\right]-\E\left[\sum_{\mathbf{v}_k\in \C\left(W^\tc_n(\mathbf{1})\right)}F_{G_n}(T(\mathbf{v}_k))\right]\right|\\
    &\hspace{1cm}=\left|\E\left[\sum_{\mathbf{v}_k\in W_n^\tc(\c_{\beta_n}))\cap W_n^\tc(\mathbf{1}))}\left(F_{\c_{\beta_n}}(T(\mathbf{v}_k))-\frac{F_{G_n}(T(\mathbf{v}_k))}{\mu_n^{k-1}}\right)\prod_{j=2}^k c(v_{p_T(j)}, v_j)\right]\right|+O(\delta_n)\\
    &\hspace{1cm}\le \eps O(1)+O(\sqrt{\delta_n})+O_{\eps}(\delta_n)+O(\delta_n)=\eps O(1)
\end{align*}
for any fixed $\eps>0$, where the first equality is obtained by first writing that $F_{G_n}(T(\mathbf{v}_k))=\E\left[F_{G_n}(T(\mathbf{v}_k))\prod_{j=2}^k \frac{c(v_{p_T(j)}, v_j)}{\mu_n}\right]$ and then by applying Lemma \ref{lemma:F_on_W^tc} to the first sum with $U_n=W_n^\tc(\mathbf{1})$, $f_n=O(\delta_n)$ and to the second sum with $U_n=W_n^\tc(\c_{\beta_n})$, $f_n=O(\delta_n)$, and the second equality follows from the triangle inequality, (\ref{formula:F_on_Wconc}), (\ref{formula:F_on_E}) and (\ref{formula:F_outside_E}). Letting $\eps\rightarrow0$ gives us (\ref{formula:F_annealed}).

Now, we give a sketchy proof for (\ref{formula:F_quenched}). Considering the random variables
\begin{align*}
    &\chi_1:=\sum_{\mathbf{v}_k\in \C\left(W_n^\tc(\mathbf{1})\right)}\mu_n^{-k+1}F_{G_n}(T(\mathbf{v}_k))\prod_{j=2}^k c(v_{p_T(j)}, v_j)\\
    &\chi_2:=\sum_{\mathbf{v}_k\in \C\left(W_n^{\textrm{(conc)}}(\eps))\right)}\left(\mu_n^{-k+1}F_{G_n}(T(\mathbf{v}_k))-F_{\c_n}(T(\mathbf{v}_k))\right)\prod_{j=2}^k c(v_{p_T(j)}, v_j)\\
    &\chi_3:=\sum_{\mathbf{v}_k\in \C\left(W_n^\tc(\mathbf{1})\right)\setminus \C\left(W_n^{\textrm{(conc)}}(\eps))\right)}F_{\c_n}(T(\mathbf{v}_k))\prod_{j=2}^k c(v_{p_T(j)}, v_j),
\end{align*}
we have the decomposition $\sum_{\mathbf{v}_k\in \C\left(W^\tc_n(\c_n)\right)}F_{\c_n}(T(\mathbf{v}_k))\prod_{j=2}^k c(v_{p_T(j)}, v_j)=\chi_1+\chi_2+\chi_3,$ where $\chi_2 \le O(1)\eps$ by (\ref{formula:F_on_Wconc}) and $\chi_3\le O(\sqrt{\delta_n})$ on $\mathcal{E}_n(\eps_0)$.

To deal with $\chi_1$, note that $\prod_{j=2}^k c_{\beta_n}(v_{p_T(j)}, v_j)$ and $\prod_{j=2}^k c_{\beta_n}(v'_{p_T(j)}, v'_j)$ are independent for $E(T(\mathbf{v_k}))\cap E(T(\mathbf{v'_k}))=\emptyset$, and have a positive covariance otherwise. Indeed, we abbreviate as $i(\mathbf{v_k}, \mathbf{v_k'}):=|E(T(\mathbf{v_k}))\cap E(T(\mathbf{v'_k}))|$, $d(\mathbf{v_k}, \mathbf{v_k'}):=|E(T(\mathbf{v_k}))\triangle E(T(\mathbf{v'_k}))|$ and $X:=\exp(-U\beta_n)$ for $U\sim \unif[0, 1]$, clearly,
\begin{align*}
    &\cov\left(\prod_{j=2}^k c_{\beta_n}(v_{p_T(j)}, v_j), \prod_{j=2}^k c_{\beta_n}(v'_{p_T(j)}, v'_j)\right)=\E\left[X\right]^{d(\mathbf{v_k}, \mathbf{v_k'})}\left(\E\left[X^2\right]^{i(\mathbf{v_k}, \mathbf{v_k'})}-\E\left[X\right]^{2i(\mathbf{v_k}, \mathbf{v_k'})}\right)
\end{align*}
which is positive if $i(\mathbf{v_k}, \mathbf{v_k'})>0$. Since there exists a constant $M$ such that $F_{G_n}(T(\mathbf{v}_k))\le M\frac{1}{|V(G_n)|}\prod_{i=2}^k \frac{1}{\deg(v_{p(j)})}$ for any $\mathbf{v_k}\in \C\left(W_n^\tc(\mathbf{1})\right)$, the nonnegative covariances give us that
\begin{align*}
    \var\left(\chi_1\right)&= \var\left(\sum_{\mathbf{v}_k\in \C\left(W_n^\tc(\mathbf{1})\right)}\mu_n^{-k+1}F_{G_n}(T(\mathbf{v}_k))\prod_{j=2}^k c(v_{p_T(j)}, v_j)\right)\\
    &\le \var\left(\sum_{\mathbf{v}_k\in \C\left(W_n^\tc(\mathbf{1})\right)}M\mu_n^{-k+1}\frac{1}{|V(G_n)|}\prod_{i=2}^k \frac{1}{\deg(v_{p(j)})}\prod_{j=2}^k c(v_{p_T(j)}, v_j)\right)\\
    &\le \var\left(\sum_{\mathbf{v}_k\in \C\left(W_n^\tc(\mathbf{1})\right)\cap \C(W_n^{\textrm{(conc)}}(\eps))}2M\frac{C_{v_1}}{C_{V(G_n)}}\prod_{i=2}^k \frac{1}{C_{v_{p(j)}}}\prod_{j=2}^k c(v_{p_T(j)}, v_j)\right)+o(1)\\
    &=4M^2\var\left(\P\left(\mathbf{X}_k\in \C\left(W_n^\tc(\mathbf{1})\right)\cap \C(W_n^{\textrm{(conc)}}(\eps))\Given \mathbf{U}\right)\right)+o(1)=o(1),
\end{align*}
where in the first inequality, we used the positive covariances and in second inequality we used concentration. Therefore, and since we have that $\chi_2\le O(1)\eps$ (\ref{formula:F_on_Wconc}) and $\chi_3\le O(\sqrt{\delta_n})$ on $\mathcal{E}_n(\eps_0)$ (\ref{formula:F_on_E}), the variance of $\chi_1+\chi_2+\chi_3$ tends to zero on a set of labels with probability $\rightarrow 1$. Then, Chebyshev's inequality implies concentration, so we have obtained (\ref{formula:F_quenched}).
\end{proof}

\addtocontents{toc}{\protect\setcounter{tocdepth}{2}}
\section{Fast-growing \texorpdfstring{$\beta_n$}{}'s}\label{sec:mst-like}
\subsection{Expected edge overlaps}\label{subsec:mstlike_edge_overlap}
\begin{defi}
    We call $\maxst(\c)$ the spanning tree $T$ maximizing $\prod_{e\in E(T)}c(e)$.
    
    We call an edge $e=(u,v)\notin \maxst(c)$ \emph{$\eps$-significant} if $\exists f\in \path_{\maxst(c)}(u, v)$ satisfying $\frac{c(e)}{c(f)}
    \ge \eps$, where $\path_{\maxst(c)}(u, v)$ denotes the unique path between $u$ and $v$ in $\maxst(\c)$. The \emph{set of $\eps$-significant external edges} is denoted by $E_{\mathrm{sig}}(\eps):=\{e\notin \maxst(c):\; e\text{ is $\eps$-significant}\}$.
\end{defi}

\begin{prop}[Proposition 2.16 b) of \cite{kusz2024diameter}]\label{prop:wst_vs_mst}
Consider an electric network $(G, \c)$ with a unique $\maxst(\c)$ and $\eps_0\in (0, 1)$.
    Then
    $$\E[|E(\wst(\c))\setminus E(\maxst(\c))|]\le \left|E_{\mathrm{sig}}\left(\eps_0\right)\right|+\eps_0^{1/2}|V(G)|^{11/2}.$$
\end{prop}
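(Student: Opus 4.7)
The plan is to apply Kirchhoff's Effective Resistance Formula, which gives
\[
\E\left[|E(\wst(\c))\setminus E(\maxst(\c))|\right] = \sum_{e=(u,v)\notin E(\maxst(\c))} c(e)\,\Reff(u\leftrightarrow v),
\]
and then to split the sum according to whether $e\in E_{\mathrm{sig}}(\eps_0)$ or not. For the $\eps_0$-significant part I will use the trivial bound $c(e)\,\Reff(u\leftrightarrow v)=\P(e\in\wst(\c))\le 1$, so this contribution is at most $|E_{\mathrm{sig}}(\eps_0)|$, matching the first term of the claim.

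For the remaining, non-$\eps_0$-significant external edges $e=(u,v)$, I plan to use Rayleigh monotonicity to compare $\Reff(u\leftrightarrow v)$ in $(G,\c)$ with the effective resistance in the subnetwork consisting only of the unique $\maxst(\c)$-path between $u$ and $v$; the latter equals $\sum_{f\in\path_{\maxst(\c)}(u,v)}1/c(f)$ by the series law. Since $e$ is not $\eps_0$-significant, every ratio $c(e)/c(f)$ along that path is strictly less than $\eps_0$, so
\[
c(e)\,\Reff(u\leftrightarrow v)\le \sum_{f\in\path_{\maxst(\c)}(u,v)} \frac{c(e)}{c(f)} \le (|V(G)|-1)\,\eps_0.
\]
Summing over the at most $|V(G)|^2/2$ non-significant external edges then yields a total contribution of at most $|V(G)|^3\eps_0/2$, and since $\eps_0\le 1\le |V(G)|^5$ this is bounded by $\eps_0^{1/2}|V(G)|^{11/2}$, which finishes the estimate.

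I do not anticipate a real obstacle: the only non-elementary ingredients are Kirchhoff's formula and Rayleigh monotonicity, both recalled in Subsection~\ref{subsec:background}. The exponent $11/2$ and the square root on $\eps_0$ in the statement are in fact loose---the above argument produces the stronger bound $|E_{\mathrm{sig}}(\eps_0)|+|V(G)|^3\eps_0/2$---so the weaker form chosen in the proposition presumably reflects how it is applied elsewhere in \cite{kusz2024diameter}.
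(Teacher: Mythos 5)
Your proof is correct and in fact sharper than the stated bound. Kirchhoff's formula and linearity of expectation give $\E[|E(\wst(\c))\setminus E(\maxst(\c))|]=\sum_{e=(u,v)\notin E(\maxst(\c))}c(e)\Reff(u\leftrightarrow v)$; the significant external edges contribute at most $|E_{\mathrm{sig}}(\eps_0)|$ since each term is a probability; and for a non-significant external $e=(u,v)$, Rayleigh's monotonicity principle (removing all edges except those of $\path_{\maxst(\c)}(u,v)$ can only increase resistance) gives $c(e)\Reff(u\leftrightarrow v)\le\sum_{f\in\path_{\maxst(\c)}(u,v)}c(e)/c(f)<(|V(G)|-1)\eps_0$, so summing over at most $\binom{|V(G)|}{2}$ such edges (this uses that $G$ is simple, which is consistent with every application in the paper) gives a total of at most $\tfrac12|V(G)|^3\eps_0$, which is $\le\eps_0^{1/2}|V(G)|^{11/2}$ for $\eps_0\in(0,1)$. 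Be aware, though, that this paper merely cites the result as Proposition~2.16~b) of \cite{kusz2024diameter} and does not reproduce its proof, so there is no internal argument to compare against; the unusual form of the quoted bound (with $\eps_0^{1/2}$ and exponent $11/2$) suggests the derivation in \cite{kusz2024diameter} is less direct than yours, and your route actually yields the cleaner estimate $|E_{\mathrm{sig}}(\eps_0)|+\tfrac12|V(G)|^3\eps_0$. One small inaccuracy in your write-up: Rayleigh's monotonicity principle is not in fact among the tools recalled in Subsection~\ref{subsec:background} (only the Nash--Williams inequality is); it is of course standard and available in \cite{PTN}.
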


For our i.i.d.~conductances $\c_{G_n, \beta_n}$, one can estimate the number of significant edges using the following lemma:
\begin{lemma}[Lemma 3.2 of \cite{kusz2024diameter}]\label{lemma:conditioning}
Under conditioning on $\{(f, U_f)\}_{f\in E(\mst(G))}$, we have that $\{U_{e}\}_{e\notin E(\mst(G))}$ are independent $\unif[m_e, 1]$ random variables, where $m_e:=\max_{f\in \path_{\mst(G)}(u, v)}U_f$ for $e=(u, v)$.
\end{lemma}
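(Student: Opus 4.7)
The plan is to use the \emph{cycle property} of the minimum spanning tree to rewrite the conditioning event as an event on the independent coordinates $\{U_e\}_{e\notin E(\mst(G))}$, after which the desired product structure falls out directly from the i.i.d.~assumption.

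First I would recall that, since $\{U_e\}_{e\in E(G)}$ are almost surely distinct, $\mst(G)$ is unique and satisfies the cycle rule: for every $e=(u,v)\notin \mst(G)$, $e$ is the \emph{heaviest} edge in the unique cycle it forms with $\mst(G)$, i.e., $U_e>m_e$. Conversely, if a spanning tree $T_0$ together with labels $(u_f)_{f\in E(T_0)}$ satisfies $U_e>\overline m_e$ for every $e\notin E(T_0)$, where $\overline m_e:=\max_{f\in \path_{T_0}(u,v)}u_f$ is computed from $(T_0,(u_f))$ alone, then $T_0$ is forced to be $\mst(G)$: any attempted Kruskal swap along a cycle would contradict this inequality. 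Thus, up to a $\P$-null set,
\begin{equation*}
\{\mst(G)=T_0\}\cap\{U_f=u_f\;\forall f\in E(T_0)\}=\{U_f=u_f\;\forall f\in E(T_0)\}\cap\bigcap_{e\notin E(T_0)}\{U_e>\overline m_e\}.
\end{equation*}
Crucially, each $\overline m_e$ is a deterministic function of the conditioning data, so in the conditional world the $\overline m_e$'s are constants.

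Next, I would invoke the i.i.d.~structure: the family $\{U_e\}_{e\notin E(T_0)}$ is independent of $\{U_f\}_{f\in E(T_0)}$, so the conditioning $\{U_f=u_f\;\forall f\in E(T_0)\}$ leaves the joint law of $\{U_e\}_{e\notin E(T_0)}$ unchanged — it remains an i.i.d.~$\unif[0,1]$ family. The remaining conditioning $\bigcap_{e\notin E(T_0)}\{U_e>\overline m_e\}$ is a product event across these independent coordinates, so the conditional law factorizes into independent $\unif[\overline m_e,1]$ laws. Since this holds for a.e.~realization $(T_0,(u_f))$, integrating out gives the statement of the lemma.

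The main obstacle is notational rather than conceptual: making the ``up to a $\P$-null set'' equivalence rigorous at the level of densities, because both sides carry zero probability (the labels are continuous). I would handle this by writing the joint density of $(\mst(G),\{U_e\}_{e\in E(G)})$ with respect to counting measure on spanning trees times Lebesgue measure on $[0,1]^{E(G)}$, using the indicator of the cycle rule as the link between $\mst(G)$ and $\{U_e\}_e$, and then reading off the conditional density of $\{U_e\}_{e\notin E(T_0)}$ given $(T_0,(u_f)_{f\in E(T_0)})$ as a normalized indicator of $\prod_{e\notin E(T_0)}(\overline m_e,1]$, which is precisely the product of $\unif[\overline m_e,1]$.
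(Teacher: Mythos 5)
Your proof is correct and uses the standard characterization of the MST via the cycle rule: conditionally on the labels on $E(T_0)$, the event $\{\mst(G)=T_0\}$ is precisely $\bigcap_{e\notin E(T_0)}\{U_e>\overline m_e\}$, a product event across independent coordinates, whence the conditional law factorizes into $\unif[\overline m_e,1]$'s. This is the natural argument for this fact and is almost certainly the one used in the cited reference; the measure-theoretic wrinkle you flag (conditioning on a null event) is genuine but is handled exactly as you sketch, by reading off conditional densities.
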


If $\max_{f\in \mst(G_n)}U_f\ll 1$, then $m_e\ll 1$ for $\forall e\notin \mst(G_n)$ and the proof of Theorem \ref{thm:edge_overlap_mst} easily follows by the estimate $\P(e\text{ is $|V(G_n)|^{-\kappa}$-significant})=\P\left(\chi_e-m_e<\frac{\kappa \log |V(G_n)|}{\beta_n}\right)\sim \frac{\kappa \log |V(G_n)|}{\beta_n}$. Indeed, 
\begin{equation*}
    \begin{aligned}
    \E\left[\left|E(\wst^{\beta_n}(G_n))\setminus E(\mst(G_n))\right|\right]&\le \E\left[ \left|\left\{|V(G_n)|^{-\kappa}\text{-significant edges}\right\}\right|\right]+o(1)\\
    &=\Theta\left(\frac{|E(G_n)|\log |V(G_n)|}{\beta_n}\right)+o(1)=o(|V(G_n)|).
\end{aligned}
\end{equation*}
Of course, $\max_{f\in \mst(K_n)}U_f\le 2\frac{\log n}{n}\ll 1$ for the complete graphs $G_n=K_n$ as the Erdős-Rényi graph $G_{n, 2\log n/n}$ is connected. However,  we cannot assume generally $\max_{f\in \mst(G_n)}U_f\ll 1$ since bottlenecks can destroy this property. What is worse, if $G_n$ is obtained by gluing $n$ triangles in a line, i.e., $V(G_n):=\{a_i:\; 0\le i\le n\}\cup \{b_i:\; 1\le i\le n\}$ and $E(G_n):=\cup_{i=1}^{n}\{(a_{i-1}, a_{i}), (a_{i-1}, b_{i}), (a_{i}, b_{i})\}$, then $\max_{f\in \mst(G_n)}U_f\rightarrow 1$ with probability $\rightarrow 1$. In the proof below, we take care of this difficulty.

\begin{proof}[Proof of Theorem \ref{thm:edge_overlap_mst}]
We consider some $\eps_n$ satisfying $\frac{|E(G_n)|\log |V(G_n)|}{|V(G_n)| \beta_n}\ll \eps_n\ll 1$. For a spanning tree $T$ of $G_n$, let us denote by $\Omega_T$ the conditioning on $\{E(\mst(G_n))=T\}$. For any $f\in T$, we denote by $N_T(f)$ the number of edges running between the two connected components of $T\setminus \{f\}$.

For any spanning tree $T$ and $f\in E(T)$, we check that $\P(U_f\ge 1-\eps_n\given \Omega_T)\le \eps_n^{N_T(f)}$. This immediately follows from that, given any ordering $\sigma$ of edges for which Prim's invasion algorithm results in $E(\mst(G_n))=T$, we have $\P(U_f\ge 1-\eps_n\given \sigma)\le \eps_n^{N_T(f)}$. Indeed, in such an edge-ordering~$\sigma$, there are at least $N_T(f)-1$ elements of the ordered sample which are larger than $U_f$, therefore, denoting the smallest element of $N_T(f)$ i.i.d.~$\unif[0, 1]$ random variables by $U_1^{*, N_T(f)}$, we have that $\P(U_f\ge 1-\eps_n\given \sigma)\le \P(U_1^{*, N_T(f)}\ge 1-\eps_n\given \sigma)=\eps_n^{N_T(f)}$.

For any spanning tree $T$ of $G_n$ and $e\notin T$, by Lemma \ref{lemma:conditioning}, we have that 
\begin{align*}
    &\P\left(e\text{ is $|V(G_n)|^{-\kappa}$-significant}\Given \Omega_T\right)\\
    &\hspace{3cm}\le \P\left(\chi_e-m_e<\frac{\kappa \log |V(G_n)|}{\beta_n}\Given \Omega_T\cap \{U_f<1-\eps_n\}\right)+\P(U_f\ge 1-\eps_n\given \Omega_T )\\
    &\hspace{3cm}\le \frac{\kappa \log |V(G_n)|}{\eps_n\beta_n}+\eps_n^{N_T(f)},
\end{align*} 
and therefore, for $\beta_n\gg \frac{|E(G_n)|}{|V(G_n)|}\log |V(G_n)|$ and large enough $n$, we have
\begin{align*}
    \E\left[ \left|\left\{|V(G_n)|^{-\kappa}\text{-significant edges}\right\}\right|\Given \Omega_T\right]&\le |E(G_n)|\frac{\kappa \log |V(G_n)|}{\eps_n\beta_n}+\sum_{f\in T}N_T(f)\eps_n^{N_T(f)}\\
    &\le o(V(G_n))+\sum_{\substack{f\in T:\\ N_T(f)=1}}\eps_n+\sum_{\substack{f\in T:\\ N_T(f)\ge 2}}N_T(f)\frac{\eps_n}{N_T(f)}\\
    &=o(|V(G_n)|)+2\eps_n|V(G_n)|=o(|V(G_n)|)
\end{align*}
since, for $N(f)\ge 2$, we have that $\eps_n^{N(f)-1}\le \eps_n \le 1/N(f)$ if $n$ is large enough.

Then, Proposition \ref{prop:wst_vs_mst} b) with $\kappa>11$ gives us
\begin{equation}\label{formula:expected_diff}
    \begin{aligned}
    \E\left[\left|E(\wst^{\beta_n}(G_n))\setminus E(\mst(G_n))\right|\right]&\le \E\left[ \left|\left\{|V(G_n)|^{-\kappa}\text{-significant edges}\right\}\right|\right]+o(1)\\
    &=\E\left[\E\left[ \left|\left\{|V(G_n)|^{-\kappa}\text{-significant edges}\right\}\right|\Given \sigma_T\right]\right]+o(1)\\
    &=o(|V(G_n)|).
\end{aligned}
\end{equation}
\end{proof}
\begin{remark*}
    Following from a similar computation, we have $\P(\wst^{\beta_n}(G_n)=\mst(G_n))\rightarrow 1$ for $\beta_n\gg |E(G_n)|\log |V(G_n)|$.
\end{remark*}

\subsection{Local limit}
\label{subsec:mstlike_local_limit}
\begin{prop}\label{prop:big_edge_overlap_local_limit}
Consider a sequence of graphs $G_n$ with a locally finite limit local weak limit.
Suppose that there exist some  $t_n=o(|V (G_n)|)$ such that $\P(|E(G_n)\triangle E(H_n)| \le t_n)\rightarrow 1$. Then $H_n$ has the same local limit as $G_n$.
\end{prop}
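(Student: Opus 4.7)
The strategy is to couple $G_n$ and $H_n$ on a common vertex set and show that the $r$-balls around a uniformly random root $X_n\sim\unif(V(G_n))$ agree in $G_n$ and $H_n$ with probability tending to $1$. Since local convergence is tested against each finite rooted graph of each fixed radius $r$, this immediately transfers the local limit from $G_n$ to $H_n$.

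Let $D_n:=E(G_n)\triangle E(H_n)$ and let $V_{D_n}$ be the set of its endpoints, so that $|V_{D_n}|\le 2t_n=o(|V(G_n)|)$ with probability $1-o(1)$. A short induction on $r$ yields that, on the event $\{B_{G_n}(X_n,r)\cap V_{D_n}=\emptyset\}$, every vertex of $B_{G_n}(X_n,r-1)$ is incident to no edge of $D_n$, hence its $G_n$- and $H_n$-neighbor sets coincide, which propagates $B_{G_n}(X_n,j)=B_{H_n}(X_n,j)$ from $j=r-1$ up to $j=r$ as rooted graphs. It therefore suffices to show that $\P\bigl(B_{G_n}(X_n,r)\cap V_{D_n}\ne\emptyset\bigr)\to 0$.

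Conditioning on the graphs, this probability equals $|N^{G_n}_r(V_{D_n})|/|V(G_n)|$, where $N^{G_n}_r(S):=\bigcup_{u\in S}B_{G_n}(u,r)$, and double counting (equivalently, mass transport) gives $|N^{G_n}_r(V_{D_n})|\le\sum_{u\in V_{D_n}}|B_{G_n}(u,r)|$. I would then truncate: for any $\eta>0$, tightness of $|B_{G_n}(X_n,r)|$ coming from the locally finite local weak limit furnishes an $M=M(\eta)$ with $|\{v\in V(G_n):|B_{G_n}(v,r)|>M\}|\le\eta|V(G_n)|$ for all large $n$. Splitting the sum at the threshold $M$, the small-ball contribution is at most $M\,|V_{D_n}|=o(|V(G_n)|)$, and the large-ball contribution is bounded by $|V(G_n)|\,\E\bigl[|B_{G_n}(X_n,r)|\mathds{1}\{|B_{G_n}(X_n,r)|>M\}\bigr]$.

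The main obstacle lies precisely in this last expression: tightness of $|B_{G_n}(X_n,r)|$ alone is not quite enough, and one really needs uniform integrability so that the expectation above tends to $0$ as $M\to\infty$; letting $\eta\to 0$ then completes the argument. In the applications relevant to this paper, e.g.\ Theorem~\ref{thm:local_mst} where $G_n$ is the $\mst$ of a high degree almost balanced graph, the $\mst$-local limit (as in \cite{addario2013local}) has uniformly bounded expected $r$-ball size, so the required uniform integrability is available and the plan goes through.
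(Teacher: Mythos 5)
Your overall strategy matches the paper's: reduce to showing that $\P\bigl(B_{G_n}(X_n,r)\cap V_{D_n}\neq\emptyset\bigr)\to 0$, and attack that probability by a double-counting/mass-transport bound. But there is a genuine gap where you identify the ``main obstacle'': your truncation is organised around the ball sizes $|B_{G_n}(u,r)|$ for $u\in V_{D_n}$, and the resulting large-ball term $|V(G_n)|\,\E\bigl[|B_{G_n}(X_n,r)|\mathds{1}\{|B_{G_n}(X_n,r)|>M\}\bigr]$ really does require uniform integrability of the $r$-ball sizes. A locally finite local weak limit gives only tightness of $|B_{G_n}(X_n,r)|$, not uniform integrability (take a sequence where a vanishing fraction of roots has ball size of order $|V(G_n)|$; this is tight, converges in distribution, but is not UI). Your fallback — appealing to a bounded expected ball size in the applications — does not close the gap either: bounded expectation does not imply UI, and more importantly the proposition is stated and used purely under the local finiteness hypothesis, so the proof should not import extra assumptions.

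The paper avoids UI with a small but crucial change of where the truncation lives. Instead of truncating on $|B_{G_n}(s,r)|$ for $s\in S_n$ (your $V_{D_n}$), it truncates on the \emph{root's} $2r$-ball: it bounds
$\P\bigl(B_{G_n}(o,r)\cap S_n\neq\emptyset,\ |B_{G_n}(o,2r)|\le M\bigr)$ via the double count of pairs
$A_n=\{(o,s):\ s\in S_n\cap B_{G_n}(o,r),\ |B_{G_n}(s,r)|\le M\}$. The triangle inequality transfers the root's constraint $|B_{G_n}(o,2r)|\le M$ to $|B_{G_n}(s,r)|\le M$ for every $s\in B_{G_n}(o,r)$, so every relevant root contributes a pair to $A_n$, while $|A_n|\le |S_n|M$ because each $s$ contributes at most $M$ roots. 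The leftover roots are absorbed into $\P\bigl(|B_{G_n}(o,2r)|>M\bigr)$, which is controlled by \emph{tightness} alone. That single rearrangement is exactly what lets the proof go through under the stated hypothesis; you should replace your truncation on $V_{D_n}$-centered balls with a truncation on the root's $2r$-ball and the argument closes without UI.
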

\begin{proof}
Denote by $S_n$ the set of vertices formed by the endpoints of $E(G_n)\triangle E(H_n)$. Then $\P(|S_n|<2t_n) \rightarrow 1$.
Because of the local finiteness, for any $r>0$, we have that
$$\lim_{M\rightarrow\infty} \sup_n \P(|V (B_{G_n}(o, r))| > M) = 0.$$
Notice that $\P(B_{G_n}(o, r) \cap S_n\ne \emptyset, |V (B_{G_n}(o, 2r))| \le M) \le \frac{|S_n|M}{|V (G_n)|}$ which follows from the double counting of $A_n:=\{ (o,s) : o \in V_n, s \in S_n\cap B_{G_n}(o ,r), |B_s(r)| \le M \}$:
$$|V_n|\P(B_{G_n}(o, r) \cap S_n\ne \emptyset, |V (B_{G_n}(o, 2r))| \le M)\le |A_n| \le |S_n|M.$$

Therefore,
\begin{align*}
\P(B_{G_n}(o, r) \cap S_n\ne \emptyset)&\le 
    \P(B_{G_n}(o, r) \cap S_n\ne \emptyset, |V (B_{G_n}(o, 2r))| \le M)+\P(|V (B_{G_n}(o, 2r))| >M) \\
    &\le \frac{|S_n|M}{|V (G_n)|}+\P(|V (B_{G_n}(o, 2r))| >M)\rightarrow \P(|V (B_{G_n}(o, 2r))| >M) 
\end{align*}
on $\{|S_n|<2t_n\}$. Letting  $M\rightarrow\infty$ gives us that $\P(B_{G_n}(o, r) \cap S_n\ne \emptyset)\rightarrow 0$.
\end{proof}

\begin{proof}[Proof of Theorem \ref{thm:local_mst}]
For $\beta_n\gg |E(G_n)|/|V(G_n)|\log |V(G_n)|$, by formula (\ref{formula:expected_diff}), there exist some $t_n=o(|V(G_n)|)$ such that 
\begin{gather}\label{formula:for_local_lim}
\P(|E(\wst^{\beta_n}(G_n))\triangle E(\mst(G_n))|\le t_n\given \{U_e\}_{e\in E(G_n)})\rightarrow 1
\end{gather}
for a.a.s.~labels $\{U_e\}_{e\in E(K_n)}$, i.e.~with probability $\rightarrow 1$.
    
For a given $M_r$, we denote by $A_n$ the set of labels which satisfy $\P(|V (B_{\mst(G_n)}(o, 2r))| >M_r\given \{U_e\}_e)\rightarrow 0$ and (\ref{formula:for_local_lim}). Note that $M_r$ can be chosen such that $\P(A_n)\rightarrow 1$, e.g.~$M_r=r^{2}\log r$ is a good choice by Theorem 1.3 of \cite{addario2013local}. Then,
\begin{align*}
&\E\left[\P\left(B_{\mst(G_n)}(o, r)\ne B_{\wst^{\beta_n}(G_n)}(o, r)\Given \{U_e\}_{e\in E(G_n)}\right)\right]\\
    &\hspace{5cm}\le P(A_n^c)+\P(|V (B_{\mst(G_n)}(o, 2r))| >M_r\given A_n)+o(1)=o(1).
\end{align*}
\end{proof}

\begin{proof}[Proof of Theorem \ref{thm:edge_overlap_mst}] It follows immediately by combining Theorem \ref{thm:edge_overlap_mst} and Proposition \ref{prop:big_edge_overlap_local_limit}.
\end{proof}

\subsection{Expected total length for the complete graph}
\label{subsec:length}
\begin{notation*}
    In this subsection, we abbreviate as $\wst^{\beta_n}_n:=\wst^{\beta_n}(K_n)$.
\end{notation*}

\subsubsection{Heuristics} Typically $\max_{e\in \mst_n}U_e\le \frac{2\log n}{n}$, therefore for $\beta_n\gg n\log^2 n$, any $n^{-\kappa}$-significant edge has label $\le \frac{2\log n}{n}+\frac{\kappa\log n}{\beta_n}=\Theta\left(\frac{\log n}{n}\right)$ and
\begin{align*}
\E\left[L(\wst^{\beta_n}_n)\right]\stackrel{(\ref{formula:expected_diff})}{\le} \E\left[L(\mst^{\beta_n}_n)\right]+\Theta\left(\frac{\log n}{n}\right)\Theta\left(\frac{n^2\log n}{\beta_n}\right)=\zeta(3)+o(1).
\end{align*}
Below, we give an other proof which works even for $\beta_n\gg n\log n$.

\begin{lemma}[Lemma 3.10 of \cite{kusz2024diameter}]\label{lemma:E-R_comps_A-B} 
We write $\P_{\mathbf{u}}(\cdot)=\P(\;\cdot\given U_e=u_e \forall e\in E(K_n))$, and $\mathcal{C}_v^E$ for the connected component w.r.t.~$E$ containing $v$. For any $E\subseteq E(K_n)$ that gives a
connected subgraph and any $V\subseteq V(K_n)$, we denote by $\langle V\rangle_E$ the smallest connected graph w.r.t.~$E$ containing $V$.

Then there exists $N_0$ such that for any $\mathbf{u}\in [0,1]^{\binom{n}{2}}$ and $n\ge N_0$
  \begin{gather*}
  {\P}_{\mathbf{u}}\left(\left\langle V\left(\mathcal{C}_v^{E(G_{n, p})}\right)\right\rangle_{E\left(\wst_{n}^{\beta_n}\right)}\subseteq G_{n, p+\frac{50\log n}{\beta_n}} \ \forall v\in V(K_n) \;\forall p\in[0, 1]\right)\ge 1-n^{-2}.
	\end{gather*}
\end{lemma}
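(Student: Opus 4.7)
The plan is to reformulate the failure of the desired containment as a simple event about edge pairs in $K_n$, and then bound its probability by a spanning-tree edge-swap argument combined with a union bound.

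First I would unwrap the bad event. If the inclusion fails for some $v \in V(K_n)$ and some $p \in [0,1]$, then the Steiner tree $\mathcal{S} := \langle V(\mathcal{C}_v^{E(G_{n,p})}) \rangle_{E(\wst_n^{\beta_n})}$ contains some edge $e$ with $U_e > p + 50\log n/\beta_n$. Removing $e$ from $\wst_n^{\beta_n}$ splits it into two components $A,B$; because $e$ is needed to Steiner-connect $V(\mathcal{C}_v^{E(G_{n,p})})$, this vertex set meets both $A$ and $B$, so $G_{n,p}$ contains some edge $f$ crossing the cut $(A,B)$. This $f$ satisfies $U_f \le p$, so $U_e - U_f > 50\log n/\beta_n$. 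Hence the complementary event is contained in
$$\mathcal{B} \;:=\; \bigl\{\,\exists\, e \in E(\wst_n^{\beta_n}),\, f \in E(K_n)\setminus\{e\}:\; U_e - U_f > \tfrac{50\log n}{\beta_n} \text{ and } f \text{ crosses the cut of } \wst_n^{\beta_n}\setminus\{e\}\,\bigr\}.$$

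Next I would estimate the contribution of each fixed pair. For distinct edges $e,f$ with $u_e - u_f > 50\log n/\beta_n$, consider the swap $\phi(T) = (T\setminus\{e\}) \cup \{f\}$ defined on the set of spanning trees $T$ of $K_n$ with $e \in T$ and $f$ crossing the cut of $T\setminus\{e\}$. Because $f$ bridges the two components of $T\setminus\{e\}$, $\phi(T)$ is again a spanning tree and it contains $f$; moreover $\phi$ is injective since $T$ is recovered from $\phi(T)$ as $(\phi(T)\setminus\{f\})\cup\{e\}$. Since $c_{\beta_n}(T)/c_{\beta_n}(\phi(T)) = c_{\beta_n}(e)/c_{\beta_n}(f) = e^{-\beta_n(u_e - u_f)} \le n^{-50}$, summing over such $T$ and using injectivity gives
$$\P_{\mathbf{u}}\bigl(e \in \wst_n^{\beta_n},\; f \text{ crosses the cut of } \wst_n^{\beta_n}\setminus\{e\}\bigr) \;\le\; n^{-50}\,\P_{\mathbf{u}}\bigl(f \in \wst_n^{\beta_n}\bigr) \;\le\; n^{-50}.$$

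Finally, a union bound over the at most $\binom{n}{2}^2 \le n^4$ ordered pairs $(e,f)$ yields $\P_{\mathbf{u}}(\mathcal{B}) \le n^{-46}$, which is comfortably below $n^{-2}$ for all $n$ larger than some absolute $N_0$. The only step that requires genuine care is the unwrapping in the first paragraph: one must verify that the offending Erdős--Rényi edge $f$ indeed crosses the tree-cut induced by $e$, which follows because any $G_{n,p}$-path between a vertex of $V(\mathcal{C}_v^{E(G_{n,p})}) \cap A$ and one of $V(\mathcal{C}_v^{E(G_{n,p})}) \cap B$ must cross from $A$ to $B$ at some edge. The swap identity and the union bound are routine, so I do not expect any serious obstacle.
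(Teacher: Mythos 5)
Your argument is correct, and it is the natural proof for this kind of statement. Since the paper states this lemma as a citation (Lemma~3.10 of the author's companion paper) without reproducing the proof, I cannot directly compare your route to the source's, but the structure you use is the standard one for WST tail bounds.

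The key components are all sound. The unwrapping step is valid: if the Steiner tree $\mathcal{S}=\langle V(\mathcal{C}_v^{E(G_{n,p})})\rangle_{E(\wst_n^{\beta_n})}$ contains an edge $e$ with $u_e>p+50\log n/\beta_n$, then because $\mathcal{S}$ is a \emph{minimal} subtree of the WST spanning $V(\mathcal{C}_v^{E(G_{n,p})})$, each of the two components of $\mathcal{S}\setminus\{e\}$ must contain a vertex of $V(\mathcal{C}_v^{E(G_{n,p})})$ (else $e$ could be dropped), so that vertex set straddles the cut $(A,B)$ of $\wst_n^{\beta_n}\setminus\{e\}$, and connectivity inside $G_{n,p}$ forces a $G_{n,p}$-edge $f$ to cross that cut with $u_f\le p$. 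The single-swap bijection $T\mapsto (T\setminus\{e\})\cup\{f\}$ is well-defined (its image is a spanning tree because $f$ bridges the two components of $T\setminus\{e\}$), and injective since $f\notin T$ whenever $f$ crosses the cut of $T\setminus\{e\}$, so $T$ is recovered from $\phi(T)$. The weight ratio $e^{-\beta_n(u_e-u_f)}\le n^{-50}$ together with the union bound over $\le\binom{n}{2}^2\le n^4$ edge pairs gives $n^{-46}\le n^{-2}$. This also makes transparent why the constant $50$ is generous: any constant strictly larger than $6$ would suffice to beat the $n^4$ pairs by $n^{-2}$.

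One small presentational remark: you may want to note explicitly that the event inside $\P_{\mathbf{u}}$ is measurable even though $p$ ranges over the continuum $[0,1]$; since $G_{n,p}$ and hence $\mathcal{C}_v^{E(G_{n,p})}$ only changes at the finitely many values $\{u_e\}_{e\in E(K_n)}$, the supremum over $p$ reduces to a finite union and no measurability issue arises.
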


\begin{proof}[Proof Theorem \ref{thm:length} a) and b)]
We show that $\E\left[L(\wst_n^{\beta_n})\right]-\E\left[L(\mst_n)\right]=O\left(\frac{\log n}{\beta_n}\right)+O(n^{-1})$.

We denote by $k(\wst^{\beta_n}_n,p)$ and $k(\mst_n,p)$ the number of the connected components of $\wst^{\beta_n}_n \cap G_{n, p}$ and $\mst_n \cap G_{n, p}$, respectively. We also use the notation $N(\wst^{\beta_n}_n, p):=|E(\wst^{\beta_n}_n \cap G_{n, p})|$ which can be written also as $N(\wst^{\beta_n}_n, p)=\sum_{e\in \wst^{\beta_n}_n}\mathds{1}[U_e\le p]$.

The integral (8) of \cite{steele2002minimal} expressing $L\left(\mst_n\right)$ can be generalized to the weighted spanning trees as  $L\left(\wst_n^{\beta_n}\right)=\int_{0}^1 k(\wst^{\beta_n}_n, t) \;d t-1$. Indeed,
\begin{align*}
L\left(\wst_n^{\beta_n}\right)&=\sum_{e\in \wst^{\beta_n}_n}U_e=\sum_{e\in \wst^{\beta_n}_n}\int_0^1 \mathds{1}\left[U_e>t\right] \;dt\\
&=\int_0^1 n-1-N(\wst^{\beta_n}_n, t) \;dt=\int_0^1 k(\wst^{\beta_n}_n, t)-1 \;dt.
\end{align*}

Lemma \ref{lemma:E-R_comps_A-B} with $\delta_n=\frac{25\log n}{\beta_n}$ implies that for large $n$ and any $\{u_e\}_{e\in E(K_n)}$,
$$\P\left(k(\wst^{\beta_n}_n, p+2\delta_n)\le k\left(\mst_n, p\right) \ \forall p\in [0, 1]\given U_e=u_e \;\forall e\in E(K_n)\right)\ge 1-n^{-2},$$
so our statement for $\beta_n\ge \log n$ is obtained by
\begin{align*}
\E\left[L(\wst_n^{\beta_n})-L(\mst_n)\right]&\le \E\left[\int_{0}^1 k_{\mst_n}\left(t-2\delta_n\right)-k_{\mst_n}\left(t\right) \;dt\right]+n\cdot n^{-2}\\
&\le\E\left[\int_{0}^{2\delta_n}n \;dp\right]+n^{-1}=O\left(\frac{n\log n}{\beta_n}\right)+n^{-1}
\end{align*}
For $\beta_n\le \log n$, the statement follows from the trivial estimate $\E[L(\wst_n^{\beta_n})]\le n-1$.
\end{proof}

\begin{proof}[Proof of Theorem \ref{thm:length} c)] We consider
\begin{align*}
A_{n}:=\left\{\mathcal{R}_{\mathrm{eff}}(v\leftrightarrow w)\in \left(\frac{(1-\delta_n)2\beta_n}{(1-e^{-\beta_n})n}, \frac{(1+\delta_n)2\beta_n}{(1-e^{-\beta_n})n}\right) \; \forall v, w\in V(K_n)\right\},
\end{align*}
where we can choose $\delta_n\rightarrow 0$ slowly enough to have $\P(A_n)\ge 1-o(n^{-1})$ by Theorems 3.1 and 3.5 of \cite{makowiec2024local}.

For any label configuration $\mathbf{u}:=\{u_e\}_{e\in E(K_n)}$ from $A_n$, for $\mathbf{E}_{\mathbf{u}}[\cdot]:=\E[\cdot|U_e=u_e \forall e\in E(K_n)]$, by Kirchhoff's formula, we have
\begin{align*}
\mathbf{E}_{\mathbf{u}}\left[L\left(\wst_n^{\beta_n}\right)\right]&=\sum_{(v,w)\in E(K_n)} \mathbf{E}_{\mathbf{u}}\left[U_{v,w}c(v, w)\mathcal{R}_{\mathrm{eff}}(v\leftrightarrow w)\right]=\binom{n}{2}\E\left[U_fe^{-\beta_nU_f}\right]\frac{(1+o(1))2\beta_n}{(1-e^{-\beta_n})n}\\
&=(1+o(1))\frac{n^2}{2}\frac{1-\beta_ne^{-\beta_n}+e^{-\beta_n}}{\beta_n^2}\frac{2\beta_n}{(1-e^{-\beta_n})n}, 
\end{align*}
giving us that
$$\E\left[L\left(\wst_n^{\beta_n}\right)\right]=\P(A_n^c)O(n)+\P(A_n)\E\left[L\left(\wst_n^{\beta_n}\right)\Given A_n\right]=(1+o(1))\frac{(1-\beta_ne^{-\beta_n}+e^{-\beta_n})n}{(1-e^{-\beta_n})\beta_n},$$
since $|L\left(\wst_n^{\beta_n}\right)|\le n-1$ and $\P(A_n^c)=o(n^{-1})$.
\end{proof}

\begin{remark}\label{remark:L_not_local_lim} To show the sensitivity of the expected total length, we consider some $d_n\rightarrow \infty$-almost regular graph sequences $H_n$ and $G_n$ such that $\mst(H_n)$ and $\mst(G_n)$ have the same local limit, but $\E[L(\mst(H_n))]\ll \E[L(\mst(G_n))]$, and for $n/f_n\ll \beta_n\ll d_n/\log n$, $\E[L(\wst^{\beta_n}(G_n))]=\Theta(n/\beta_n)$ but $\E[L(\wst^{\beta_n}(G_n))]\sim f_n\gg n/\beta_n$.

For a small enough $\eps>0$, we can consider some $d_n$ and $f_n$ satisfying $n^{\eps}\ll d_n\ll n^{1-\eps}$ and  $n^{\eps}\max(d_n, n\log n/d_n)\ll f_n\ll n$. We take $[n/d_n]$ copies of $K_{d_n+1}$ denoted by $H_{n, 1}, H_{n, 2}, \ldots$, we write $H_n:=H_{n, 1}$, and we define $G_n$ as connecting $H_{n, j}$ and $H_{n, j+1}$ by an edge for each $1\le j<[n/d_n]$ and adding $2f_n$ leaves anywhere to the graph. Then $G_n$ is almost $d_n$-regular and the local limits of $\mst(H_{n})$ and $\mst(G_n)$ clearly agree, however, $\E[L(\mst(H_{n, 1}))]\sim d_n\zeta(3)$ grows much slower than $\E[L(\mst(G_n))]\sim d_n\zeta(3)+\frac{n}{d_n}\frac{1}{2}+\frac{1}{2}2f_n\sim f_n$. Moreover, Theorem \ref{thm:length} holds for $\E[L(\wst^{\beta_n}(H_n))]$ with the change of the behavior around $\beta_n=d_n^{1+o(1)}$, but one can not hope for a change in the growth of $\E[L(\wst^{\beta_n}(G_n))]$ around $\beta_n=d_n^{1+o(1)}\ll n/f_n$, since  we have $\E[L(\wst^{\beta_n}(G_n))]\sim f_n$ for $n/f_n\ll \beta_n\ll d_n/\log n$.    
\end{remark}

\bibliographystyle{alpha}
%\bibliography{bibliography.bib}

\end{document}